\documentclass[11pt,reqno,twoside,a4paper]{article}
\usepackage{mysty}
\usepackage{wrapfig}
\usepackage{fullpage}
\flushbottom

\title{Low Complexity Regularized Phase Retrieval}

\author{Jean-Jacques Godeme\thanks{Normandie Univ, ENSICAEN, CNRS, GREYC, France. e-mail: \texttt{jean-jacques.godeme@unicaen.fr, Jalal.Fadili@ensicaen.fr}.} \and Jalal Fadili\footnotemark[1]}  

\date{}

\begin{document}

\maketitle
\begin{flushleft}\end{flushleft}

\begin{abstract}
In this paper, we study the phase retrieval problem in the situation where the vector to be recovered has an a priori structure that can encoded into a regularization term. This regularizer is intended to promote solutions conforming to some notion of simplicity or low complexity. We investigate both noiseless recovery and stability to noise and provide a very general and unified analysis framework that goes far beyond the sparse phase retrieval mostly considered in the literature. In the noiseless case we provide sufficient conditions under which exact recovery, up to global sign change, is possible. For Gaussian measurement maps, we also provide a sample complexity bound for exact recovery. This bound depends on the Gaussian width of the descent cone at the sought-after vector which is a geometric measure of the complexity of the latter. In the noisy case, we consider both the constrained (Mozorov) and penalized (Tikhonov) formulations. We provide sufficient conditions for stable recovery and prove linear convergence for sufficiently small noise. For Gaussian measurements, we again give a sample complexity bound for linear convergence to hold with high probability. This bound scales linearly in the intrinsic dimension of the sought-after vector but only logarithmically in the ambient dimension. 
\end{abstract}

\begin{keywords}
Phase retrieval, Variational regularization, Low complexity, Sparsity, Exact recovery, Robustness.
\end{keywords}


\section{Introduction}\label{sec:intro}
\subsection{The phase retrieval problem}
The phase retrieval problem arises in many applications including X-ray crystallography, diffraction imaging, and light scattering, to name just a few; see \cite{shechtman_phase_2014,JaganathanReview16,luke_phase_2017} and references therein. Phase retrieval is an active research area and we refer to \cite{shechtman_phase_2014,JaganathanReview16,luke_phase_2017,fannjiang_numerics_2020,vaswani_non-convex_2020} for recent reviews of the current state of the art.

Our focus in this paper will be on the case of real signals. In this case, phase retrieval consists of recovering $\avx\in\bbR^n$ from phaseless, possibly noisy, real or complex measurements. In the noiseless case, one measures the squared modulus of the inner product between $\avx$ and $m$ sensing/measurement vectors $(a_r)_{r \in \bbrac{m}}$, ${\bbrac{m}} \eqdef \{1,\ldots,m\}$, and the goal is to recover $\avx$ from the intensities $(\valabs{\pscal{a_r,\avx}})_{r\in \bbrac{m}}$. In real applications, however, the intensity measurements are not perfectly acquired. For instance, let us consider light scattering for precision in optics~\cite{amra_instantaneous_2018,buet2022immediate,buet2023instantaneous} which is our motivating application. Here, the goal is to describe the roughness of a polished surface. The latter is illuminated with a laser source, and the diffusion is measured by moving a detector. Then the power spectral density of the surface topography can be directly measured. However, during the acquisition process, different types of noise can corrupt the measurements such as photon noise, thermal noise, Johnson noise, \etc. To account for noise, we thus consider a generic additive noise model in which case the noisy phase retrieval problem reads:
\begin{equation}\tag{GeneralPR}\label{eq:GeneralPR}
\begin{cases}
\text{Recover $\avx\in \bbR^n$ from the measurements $y \in \bbR^m$} \\
y[r]=\valabs{\pscal{a_r,\avx}}^{2}+\epsi[r], \quad r \in \bbrac{m}, 
\end{cases}
\end{equation}
where $[r]$ is the $r$-th entry of the corresponding vector, and $\epsi\in\bbR^m$ is the noise vector. The measurement model \eqref{eq:GeneralPR} is quite standard and is similar for instance to \cite{candes_phaselift_2013,demanet_stable_2013,chen_solving_2017}.

Since $\avx$ is real-valued, the best one can hope for is to ensure that $\avx$ is uniquely determined from its intensities up to a global sign. In fact, phase retrieval is a severely ill-posed inverse problem in general, and even for $\epsilon=0$, checking whether a solution to \eqref{eq:GeneralPR} exists or not is known to be NP complete \cite{Sahinoglou91}. The situation is even more challenging in presence of noise and stability can be only ensured to sufficiently small noise. Thus, one of the major challenges is to design optimization problems and efficient recovery algorithms and find conditions on $m$, $(a_r)_{r \in {\bbrac{m}}}$ and $\epsilon$ which guarantee exact (up to a global sign change) and stable recovery to small enough noise.

In order to reach the land of well-posedness without unreasonably increasing the number of measurements (\ie oversampling), it appears natural to restrict the inversion process to a well-chosen low dimensional subset of $\bbR^n$ containing the plausible solutions including $\calX \eqdef \{\pm\avx\}$. It is then natural to leverage this low dimensional structure which will hopefully allow to minimize the number of measurements needed for recovery, and this is the most important as the measurement process might be expensive or can destroy the sample at hand. A standard way to implement this idea consists in adopting a variational framework where the sought-after solutions are those where a prior penalty/regularization function is the smallest. This approach is in line with variational regularization theory pioneered by Tikhonov \cite{tikhonov1978solutions}. Put formally, this amounts to solving the following optimization problem
\begin{equation}\tag{$\scrP_{y,\lambda}$}\label{eq:eqminregulPRpen}
\inf_{x\in\bbR^n} \ens{F_{y,\lambda}(x) \eqdef \normm{y - |Ax|^2}^2 + \lambda R(x)} , 
\end{equation}
where $A=\transp{[a_1,\ldots,a_m]}$ and $R: \bbR^n \to \bbR \cup \{\pinf\}$ is a proper closed convex function which is intended to promote objects similar or close to $\avx$. $\lambda > 0$ is the regularization parameter which balances the trade-off between fidelity and regularization. It is immediate that the data fidelity function $x \in  \bbR^n\mapsto\normm{y - |Ax|^2}^2$ is non-convex due to the quadratic measurements (though weakly convex). Besides, it is $C^2(\bbR^n)$ but its gradient is not Lipschitz continuous. In this setting, we can associate to this data fidelity term following entropy or kernel function
\begin{equation}\label{eq:entropy}
\psi(x)=\frac{1}{4}\normm{x}^4+\frac{1}{2}\normm{x}^2. 
\end{equation}
It is known that $\normm{y - |A \cdot|^2}^2$ is so-called smooth relative to $\psi$ \cite{bolte_first_2017}, meaning that it verifies an appropriate descent lemma wrt to $\psi$; see \eqref{eq:relsmooth}. Therefore  $F_{y,\lambda}(x)$ is amenable to the efficient Bregman Proximal Gradient algorithm described and discussed in Section~\ref{sec:numexp}.

\medskip

It is well known in the inverse problem literature, see e.g. \cite{scherzer2009}, that the value of $\lambda$ should typically be an increasing function of $\normm{\epsi}$. In the special case where there is no noise, \ie $\epsi = 0$, the fidelity to data should be perfect, which corresponds to considering the limit\footnote{This will be studied rigorously in Section~\ref{sec:robustpenalized}.} of \eqref{eq:eqminregulPRpen} as $\lambda \to 0^+$. This limit turns out to be the noiseless version for exact (up to a global sign) recovery,
\begin{equation}\tag{$\scrP_{\avy,0}$}\label{eq:eqminregulPRexact}
\inf_{x\in\bbR^n} R(x) \qsubjq |Ax| = \sqrt{\avy} \qwhereq \avy \eqdef |A\avx|^2 .
\end{equation}
Denoting $\overbar{\calF} \eqdef \enscond{w \in \bbR^m}{|w|= \sqrt{\avy}}$, which is a non-empty finite set of cardinality $2^m$ (vertices of a hyper-rectangle), \eqref{eq:eqminregulPRexact} can be equivalently written as
\begin{equation*}
	\inf_{x\in\bbR^n} R(x) \qsubjq A x \in \overbar{\calF}.  
\end{equation*}
\subsection{Prior work}
Regularized phase retrieval  is an active area of research. Our review of this problem is not exhaustive and readers interested in a comprehensive and extended overview should refer to the following references \cite{shechtman_phase_2014,JaganathanReview16,vaswani_non-convex_2020,luke_phase_2017,fannjiang_numerics_2020}. 

\paragraph{Sparse phase retrieval} When the signal of interest is $s-$sparse w.r.t some basis and the goal is to recover the signal from fewer measurements $m\ll n$, this problem is referred as ``compressive or sparse phase retrieval''.  From a theoretical perspective, generic sensing vectors $(a_r)_{r\in \bbrac{m}}$  are injective (up to a global sign change) in the class of real $s-$sparse signals as soon as the number of measurements satisfies $m\geq 2s-1$ \cite{wang_phase_2014}. We recall that the natural information theoretical lower-bound is $m\gtrsim s\log(n)$ for solving the problem using any approach. Whereas for Gaussian sensing vectors, \cite{ohlsson_conditions_2014} show that $m \gtrsim s\log(en/s)$ separate signals well. 
In \cite{Voroninski14}, the authors introduced a notion of strong Restricted Isometry Property (s-RIP) which holds for the class of Gaussian sensing vectors and they showed that solving \eqref{eq:eqminregulPRexact} when $R$ is the $\ell_0-$norm is equivalent to solving the same problem replacing $\ell_0$ with the $\ell_1$ norm for sensing vectors satisfying the s-RIP. For Gaussian sensing vectors, the latter holds for $m \gtrsim s \log(en/s)$. Stable sparse phase retrieval under the s-RIP was studied in \cite{gao_stable_2016}. Other works in the same vein include \cite{tarokh_new_2013,yang_misspecified_2019,gao_stable_2016,wang_phase_2014,schniter_compressive_2015,jaganathan_recovery_2012,jaganathan_sparse_2013,jaganathan_sparse_2017,bandeira_near-optimal_2013}.

We can categorize the methods to solve the sparse phase retrieval problem into three groups. The first considers convex relaxation, the second tackles directly the non-convex problem and the third manually designs the sensing vectors. 

Lifting methods such as the PhaseLift can be used to convexify the constraint in \eqref{eq:eqminregulPRpen} while sparsity on the lifted rank-one matrix is now to be promoted entry-wise or row-wise. This regularization entails that the rank-one matrix to be recovered is $s^2$ sparse and thus, as expected, the sample complexity for exact recovery from Gaussian measurements is $m \gtrsim s^2\log(n)$ \cite{li_sparse_2013}. However, this problem does not scale with $s$ and it is not possible to achieve the natural theoretical lower-bound using this approach \cite{li_sparse_2013,oymak_simultaneously_2015}. Another approach in this setting is PhaseMax \cite{hand_compressed_2016} which consists in relaxing the non-convex constraint set in \eqref{eq:eqminregulPRexact} from equality to inequality (\ie from the sphere constraint to the ball one), and then to solve the resulting linear program. This method achieves the optimal sample complexity  $m \gtrsim s\log(n/s)$ for Gaussian sensing vectors. However, it requires an anchor or initialization that is sufficiently correlated with the true signal which requires $m \gtrsim s^2\log(n)$ to be successful. In \cite{mcrae_optimal_2023}, the authors use a convex relaxation and propose an atomic norm that favours low-rank and sparse matrices. They achieve nearly optimal sample complexity \ie bound $m \gtrsim s\log(en/s)$. Regarding the stability of the reconstruction against additive noise, the same authors showed that the sparse with low-rank atomic norm regularization achieves a reconstruction error bound of $O(\sigma\sqrt{(s\log(en/s))/m})$ where $\sigma$ is the noise standard deviation.    

Concerning methods that study directly the sparse phase retrieval problem, it has been shown that $m \gtrsim s^2\log(n)$ are sufficient to provably recover the original vector (up to global sign/phase change)  \cite{cai_optimal_2016,netrapalli_phase_2015,wang_sparta_2017,yuan_phase_2019,jagatap_fast_2017}.
The authors in \cite{cai_optimal_2016} proposed a method to find a good initialization of the problem which requires that $m \gtrsim s^2\log(n)$. The authors in \cite{netrapalli_phase_2015}  proposed an alternating minimization strategy to reconstruct the signal. Sparta \cite{wang_sparta_2017} uses an amplitude-based instead of an intensity-based measurement which is clearly non-smooth and \cite{yuan_phase_2019} proposed a sparse truncated version of the classical Wirtinger flow \cite{Candes_WF_2015}. The authors in \cite{jagatap_fast_2017} proposed the Copram which combines Alternating minimization and the Cosamp \cite{needell_cosamp_2009}, and they showed that reconstruction is possible with $m \gtrsim s^2\log(n)$ measurements. In the general case of block sparsity or group Lasso, they showed that exact recovery (up to a global sign change) is possible with $m\gtrsim\frac{s^2}{B}\log(n)$ where $B$ is the size number of the blocks slightly improving the bound on the number of measurements. 
As far as robust recovery is concerned, a thresholded Wirtinger flow for noisy sparse phase retrieval was proposed and analyzed in \cite{cai_optimal_2016} (see also \cite{yang_misspecified_2019} for an extension to for misspecified phase retrieval). For $A$ standard Gaussian and random noise with zero-mean independent centred sub-exponential entries, it was shown that when properly initialized, this procedure with $m \gtrsim s^2\log(n)$ achieves a reconstruction error of $O(\sigma/\normm{\avx}\sqrt{(s\log(n))/m})$, where $\sigma$ characterizes the noise level. This rate is essentially optimal as proved in \cite{lecue_minimax_2015}.  

If one has complete control over how the measurements vectors are designed, then near-optimal sample complexity bounds with practical algorithms can be obtained \cite{bahmani_efficient_2015,pedarsani_phasecode_2017}. This is however of limited interest to us as we are primarily concerned with generic measurement vectors.

\paragraph{General regularized phase retrieval} As reviewed above, most existing work focuses on the recovery of sparse signals from phaseless measurements. On the other hand, real signals and images involve much richer structure and complexity such as being piecewise smooth. In this case, a wise choice of the regularizer would be the popular Total Variation (TV) seminorm, or group-sparsity in some frame. This scope is quite recent for the phase retrieval. For the TV phase retrieval, we refer to \cite{beinert_total_2022,beinert_total_2023}.  In \cite{beinert_total_2022}, the authors combined the standard Fienup's Hybrid input-output \cite{fienup_phase_1982} method that is well-known to be the Douglas-Rachford \cite{bauschke_phase_2002} with TV regularization based on a primal-dual method. This was applied to optical diffraction tomography and the sensing vectors are the Non-Uniform Fourier Transform. In \cite{beinert_total_2023}, they extend the scope to moving objects. See also \cite{pauwels_fienup_2018} for an algorithmic framework based on Fienup methods with general semialgebraic regularizers. None of those works proved recovery and stability results.

In the general setting, we have to cite the work of \cite{lecue_minimax_2015}, where the authors consider the reconstruction of a real vector living in a general model subset $\Omega \subset \bbR^n$ from sub-Gaussian measurements. They showed that empirical risk minimization (ERM) of $\Omega$ to solve the noisy phase retrieval produces a signal close enough to the true signal (up to a global sign change) and this error depends on the Gaussian width of $\Omega$ and the signal-to-noise ratio of the problem. Phase retrieval with general regularization is studied in \cite{soltanolkotabi_structured_2019}, where the authors showed that the main problem for achieving the optimal sample complexity is the initialization step. However, it is still an open question to cook up a general good recipe to find an anchor or initialization that is close enough or sufficiently correlated with the true vector beyond the sparse case, and with a reasonable bound on the number of measurements, \ie that does not scale as the square of the intrinsic dimension of the vector to recover.

\subsection{Contributions and relation to prior work}
In this paper, we start by providing sufficient conditions under which the set of solutions to \eqref{eq:eqminregulPRexact} is non-empty. Then, we deliver a unified analysis showing that the recovery of $\avx$ up to a global sign is exact when we solve \eqref{eq:eqminregulPRexact} under two geometric (deterministic) conditions on $R$, the descent cone of $R$ and the deterministic measurements $A$. It turns out that for standard Gaussian measurements and the class of regularizer that we consider, these conditions are satisfied with high probability under a sufficiently large sample complexity. As a consequence, when the number of measurements is large enough the recovery of $\avx$ up to a sign change is exact by solving \eqref{eq:eqminregulPRexact}. Furthermore, we provide an explicit expression of the recovery bounds for decomposable regularizers (that include the lasso, the group lasso, and their ordered weighted versions), for frame analysis-type regularizers and the total variation. To the best of our knowledge, these are the first results of this kind in the literature of phase retrieval. Our results encompass those of \cite{Voroninski14} as a special case. Some of our arguments in the proof can be seen as a generalization of those used compressed sensing (see \cite{chandrasekaran_convex_2012,vaiter2015low}) to the real phase retrieval problem.

Concerning stable recovery, we first consider a relaxed inequality constrained form \eqref{eq:eqminregulPRconst} which is known as the residual method or Mozorov formulation. We show that under the previous deterministic conditions, the set of solutions is non-empty. Moreover, the solutions are located in a ball of center $\avx$ up to a sign-change and radius equal to the signal-to-noise ratio. For standard Gaussian measurements and a large class of regularizers, we show with high probability that solving \eqref{eq:eqminregulPRconst} yields a solution that is near $\avx$ up to a sign change as soon as the number of measurements is large enough. 

We then turn to penalized problem \eqref{eq:eqminregulPRpen}. First, we show that under an appropriate geometric deterministic condition, the problem has a non-empty compact set of minimizers. Then, using $\Gamma-$convergence tools, when $\lambda\to0$ and $\epsi\to0$, we show that the set of minimizers converges to the set of true vectors up to a global sign change. Finally, we show that for enough small noise, the recovery error bound scales as $O(\normm{\epsi})$, a rate known in the inverse problem literature as linear convergence\footnote{The reason is that the bound is indeed linear in $\normm{\epsi}$.}. This holds under a non-degenerate source condition and a restricted injectivity condition tailored to the phase retrieval problem.

For standard Gaussian measurements, we provide sample complexity bounds for the latter two conditions to hold with high probability with several regularizers. This covers both the popular sparse retrieval case, but goes far beyond it by providing bounds that are new and unknown in the literature to the best of our knowledge. Our results include those of \cite{cai_optimal_2016,yang_misspecified_2019} as a special case. Some of our arguments and results generalized those in compressed sensing (see \cite{candes2011simple,vaiter2015low}) to the real phase retrieval problem.

\subsection{Outline of the paper}
The rest of this paper is organized as follows. Section~\ref{sec:prelim} gathers all the preliminaries required for our exposition. In Section~\ref{sec:unique}, we study exact recovery of the noiseless regularized phase retrieval problem. We then turn to stability to noise where we consider the the Mozorov (constrained) formulation in Section~\ref{sec:robustconstrained} and the Tikhonov (penalized) formulation in Section~\ref{sec:robustpenalized}. The proofs of the technical results are deferred to the appendix.

\section{Notations and preliminaries}\label{sec:prelim}
\paragraph{Vectors and matrices} 
We denote $\pscal{\cdot,\cdot}$ a scalar product on $\bbR^n$ and $\normm{\cdot}$ the corresponding norm. $\ball(x,r)$ is the corresponding ball of \tcb{radius $r$} centered at $x$ and $\mathbb{S}^{n-1}$ is the corresponding unit sphere. Moreover, $\norm{\cdot}{p}, p\in[1,\infty]$ stands for the $\ell_p$ norm. 

For $m \in \N^*$, we use the shorthand notation $\bbrac{m}=\{1,\ldots,m\}$.  The $i$-th entry of a vector $x$ is denoted $x[i]$. For any $y\in\bbR^m$ the operations $\abs{y}$ and $y^2$ should be understood componentwise. Given a matrix $M \in \bbR^{m \times n}$, $\transp{M}$ is its transpose. Let $\lambda_{\min}(M)$ and $\lambda_{\max}(M)$ be respectively the smallest and the largest eigenvalues of $M$. 

In the following, for a subspace $V \subset \bbR^n$, $\proj_V$ denotes the orthogonal projector on $V$, and
\begin{equation*}
x_V \eqdef \proj_V x \qandq M_V \eqdef M \proj_V.
\end{equation*}
For a matrix $M$ and index set $I$, $M^I$ (resp. $M_I$) denotes the sub-matrix whose rows (resp. columns)  are only those of $M$ indexed by $I$.

\paragraph{Sets}
For a finite subset $I$, $\abs{I}$ is its cardinality and $I^c$ its complement. For a non-empty set $\calS \in \bbR^n$, we denote $\cl{\calS}$ its closure,  $\clconv{\calS}$ the closure of its convex hull, and $\iota_\calS$ its indicator function, \ie $\iota_\calS(x)=0$ if $x \in \calS$ and $+\infty$ otherwise. For a non-empty convex set $\calS$, its \emph{affine hull} $\Aff(\calS)$ is the smallest affine manifold containing it. It is a translate of its \emph{parallel subspace} $\Lin(\calS)$, \ie $\Lin(\calS) = \Aff(\calS) - \calS= \bbR(\calS-\calS)$, for any $x \in \calS$. The \emph{relative interior} $\ri(\calS)$ of a convex set $\calS$ is the interior of $\calS$ for the topology relative to its affine full.\\

For any vector $x\in\bbR^n$, the distance to a non-empty set $\calS \subset \bbR^n$ is   
\begin{equation}
\dist(x,\calS) \eqdef \inf_{z \in \calS}\normm{x-z} .
\end{equation}

Throughout, we use the shorthand notation $\avX \eqdef \ens{\pm\avx}$ to denote the set of true vectors. Hence, for any vector $x\in\bbR^n$, the distance to the set of true vectors is  
\begin{equation}\label{eq:dist-true}
\dist(x,\avX)\eqdef \min\pa{\normm{x-\avx},\normm{x+\avx}} .
\end{equation}

\begin{remark}
Our limitation of the set of true solutions to $\{\pm\avx\}$ may appear restrictive since even for real vectors, the equivalence class is much larger than what we are allowing. However, this restriction will be justified later.
\end{remark}

\begin{definition}[Support function] The \emph{support function} of $\calS \subset \bbR^n$ is
\[
\sigma_{\calS}(z)=\sup_{x \in \calS} \pscal{z,x}.
\]
\end{definition}

\begin{definition}[Polar set]\label{defn:convex-polarset}
Let $\calS$ be a non-empty convex set. The set $\calS^\circ$ given by
\begin{equation*}
\calS^\circ = \enscond{v \in \bbR^n}{\pscal{v,x} \leq 1 \quad \forall x \in \calS}
\end{equation*}
is called the polar of $\calS$.
\end{definition}
The set $\calS^\circ$ is closed convex and contains the origin. When $\calS$ is also closed and contains the origin, then it coincides with its bipolar, \ie $\calS^{\circ\circ}=\calS$.

\begin{definition}[Gauge]\label{def:gauge-set} Let $\calS \subseteq \bbR^n$ be a non-empty closed convex set containing the origin. The gauge of $\calS$ is the function $\gauge_\calS$ defined on $\bbR^n$ by
\begin{equation*}
  \gauge_\calS(x) =
  \inf \enscond{\mu > 0}{x \in \mu \calS} .
\end{equation*}
As usual, $\gauge_\calS(x) = + \infty$ if the infimum is not achieved as a minimum.
\end{definition}
We have the following characterization of the support function in finite dimension. 
$\gauge_\calS$ is a non-negative, closed  and sublinear function. When $\calS$ is a closed convex set containing the origin, then 
\begin{equation*}
\gauge_\calS = \sigma_{\calS^\circ} \qandq \gauge_{\calS^\circ} = \sigma_\calS.
\end{equation*}
Let $\calS\subset \bbR^n$ a nonempty, closed bounded and convex subset. If $0 \in \ri(\calS)$, then $\gauge_\calS \in \Gamma_0(\R^n)$ is sublinear, non-negative and finite-valued, and $\sigma_{\calS}(x) = 0 \iff x \in \LinHull(\calS))^\bot$.

\begin{definition}[Asymptotic cone] 
Let $\calS$ be a non-empty closed convex set. The asymptotic cone, or recession cone is the closed convex cone defined by 
\[
\calS_{\infty}\eqdef\bigcap\limits_{t>0}\frac{\calS-x}{t},\quad x\in \calS. 
\]
\end{definition}
This definition  does not depend on the choice of $x\in\calS$. 
The importance of the asymptotic cone becomes obvious through the following fundamental fact that will then play a crucial role to study well-posedness of our minimization problems; see \cite[Proposition~2.1.2]{AT03}.

\begin{fact}
$\calS$ is compact if and only if $\calS_{\infty}=\Ba{0}$.
\end{fact}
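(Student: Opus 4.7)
The plan is to split the equivalence into its two implications: the easy direction rests directly on the definition of $\calS_{\infty}$, while the harder one relies on a sequential characterization of the recession cone combined with compactness of the unit sphere.

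For the forward implication, I assume $\calS$ is compact. Picking any $x\in\calS$ and any $v\in\calS_{\infty}$, the definition of $\calS_{\infty}$ yields $v\in(\calS-x)/t$ for every $t>0$, equivalently $x+tv\in\calS$. Boundedness of $\calS$ then gives $\normm{x+tv}\leq M$ uniformly in $t>0$, whence $t\normm{v}\leq M+\normm{x}$ for all $t>0$, forcing $v=0$. Combined with the trivial inclusion $\{0\}\subseteq\calS_{\infty}$ (since $x\in\calS$ implies $0\in(\calS-x)/t$ for every $t>0$), this gives $\calS_{\infty}=\{0\}$.

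For the reverse implication, suppose $\calS_{\infty}=\{0\}$. Since $\calS$ is closed by assumption, it is enough to establish boundedness, which I would do by contraposition. Assuming $\calS$ unbounded, I pick a sequence $(x_k)_k\subset\calS$ with $\normm{x_k}\to\infty$. Because the unit sphere is compact, some subsequence of $u_k\eqdef x_k/\normm{x_k}$ converges to a unit vector $v$. The remaining goal is then to show $v\in\calS_{\infty}$, which will contradict $\calS_{\infty}=\{0\}$ since $\normm{v}=1$.

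To conclude I would invoke the sequential characterization valid for any non-empty closed convex set: $v\in\calS_{\infty}$ if and only if there exist scalars $t_k\to\infty$ and points $y_k\in\calS$ with $y_k/t_k\to v$. With $t_k=\normm{x_k}$ and $y_k=x_k$ this holds along the extracted subsequence. If this characterization is not to be taken for granted, the step I would have to derive is its non-obvious direction: fix $x\in\calS$ and $s>0$, and observe that for $k$ large enough $s/t_k\in(0,1]$, so convexity of $\calS$ yields $(1-s/t_k)x+(s/t_k)y_k\in\calS$; letting $k\to\infty$ and using closedness of $\calS$ gives $x+sv\in\calS$, \ie $v\in(\calS-x)/s$. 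Since $s>0$ is arbitrary, $v\in\calS_{\infty}$. The combined use of convexity and closedness in this limiting argument is the only real subtlety; everything else reduces to the definition of the asymptotic cone and standard boundedness arguments.
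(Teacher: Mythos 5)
Your proof is correct: both implications are handled properly, the normalization-and-subsequence argument in the reverse direction is the right idea, and the convexity-plus-closedness limiting step establishing $x+sv\in\calS$ is exactly the subtle point that needs justification. The paper itself offers no proof of this fact (it only cites \cite[Proposition~2.1.2]{AT03}), and your argument is the standard one underlying that reference, so there is nothing to compare beyond noting that you have correctly supplied the omitted details.
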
 

\paragraph{Functions}
A function  $f: \bbR^n \to \barR = \bbR \cup \{\pinf\}$ is closed (or lower semicontinuous (lsc)) if so is its epigraph. The effective domain of $f$ is $\dom(f) = \enscond{x\in\bbR^n}{f(x) < +\infty}$ and $f$ is proper if $\dom(f) \neq \emptyset$ as is the case when it is finite-valued. We denote by $\Gamma_0(\bbR^n)$ the class of proper lsc convex function on $\bbR^n$.  

A function $f$ is coercive if $\lim_{\normm{x} \to +\infty}f(x)=+\infty$. $f$ is said sublinear if it is convex and positively homogeneous. The Legendre-Fenchel conjugate of $f$ is $f^*(z)=\sup_{x \in \bbR^n} \pscal{z,x} - f(x)$.
Let the kernel of a function be defined as $\ker(f)\eqdef\Ba{z\in\bbR^n:f(z)=0}$.
Let us denote by $\calS_{\lev f}(\avx)\eqdef \Ba{z\in\bbR^n:f(z)\leq f(\avx)}$ the sublevel set of $f$ at $\avx$.

The subdifferential $\partial f(x)$ of a convex function $f: \bbR^n \to \barR$ at $x$ is the set
\[
\partial f(x) = \enscond{v \in \bbR^n}{f(z) \geq f(x) + \pscal{v,z-x}, \quad \forall z \in \dom(f)} .
\]
An element of $\partial f(x)$ is called a subgradient. If the convex function $f$ is differentiable at $x$, then its only subgradient is its gradient, \ie $\partial f(x) = \ens{\nabla f(x)}$.

The Bregman divergence associated to a convex function $f$ at $x$ with respect to $v \in \partial f(x) \neq \emptyset$ is
\[
\breg{f}{v}{z}{x} = f(z) - f(x) - \pscal{v,z-x} .
\]
The Bregman divergence is in general non-symmetric. It is also non-negative by convexity. When $f$ is differentiable at $x$, we simply write $\breg{f}{}{z}{x}$ (which is, in this case, also known as the Taylor distance).

For a proper closed function $f: \bbR^n \to \barR$, $f_\infty: \bbR^n \to \barR$ is the asymptotic function or recession function \cite{AT03} associated with $f$. It is defined by
\begin{equation}\label{eq:AF}
f_\infty(z) \eqdef \liminf_{z^\prime \to z, t \to +\infty}\frac{f(t z^\prime)}{t} .
\end{equation}
It is well-known that $f_\infty$ is lsc and positively homogeneous and that its epigraph is the asymptotic cone of the epigraph of $f$. This function plays an important role in the existence of solutions to minimization problems. Besides for any closed convex set $\calS$, one has 
$
\pa{\iota_{\calS}}_{\infty}=\iota_{\calS_{\infty}}. 
$

\paragraph{Operator norm} Let $g_1$ and $g_2$ be two finite-valued gauges  defined on two vector spaces $V_1, V_2$, and $A:V_1\to V_2$ be a linear map. The  \textit{operator bound} $\anormOP{A}{g_1}{g_2}$ of $A$ between $g_1$ and $g_2$ is given by
\[
\anormOP{A}{g_1}{g_2}=\sup\limits_{g_1(x)\leq1} g_2(Ax).
\]
Let us note that $\anormOP{A}{g_1}{g_2}<\infty$ if and only if $A\ker(g_1)\subset \ker(g_2).$ Moreover a sufficient condition for $\anormOP{A}{g_1}{g_2}<\infty$  is that $g_1$ is coercive. As a convention, $\anormOP{A}{g_1}{\norm{\cdot}{p}}$ is denoted  as $\anormOP{A}{g_1}{p}$. A direct consequence of this definition is the fact that, for every $x\in V_1$, 
\[
g_2(Ax)\leq \anormOP{A}{g_1}{g_2} g_1(x).
\]

\paragraph{Preliminaries from probability theory}

Many of the following notations for probabilistic concepts are adopted directly from \cite{vershynin_introduction_2011,tropp_user-friendly_2012}.  We denote by $\prspace$ a \emph{probability space}. 

\begin{definition}\label{def:net}
	Let $\calS$ be an arbitrary bounded subset of $\bbR^n$. The covering number of $\calS$ in the
	norm $\normm{\cdot}$ at resolution $\delta > 0$ is the smallest number, $N(\calS,\delta)$, such that $\calS$ can be covered with balls $\ball(x_i,\delta)$, $x_i \in \calS$, $i\in \bbrac{N(\calS,\delta)}$, \ie,
	\[
	\calS \subseteq \bigcup_{i \in \bbrac{N(\calS,\delta)}} \ball(x_i,\delta)
	\]
	The finite set of points $\calS_\delta \eqdef \enscond{x_i}{i \in \bbrac{N(\calS,\delta)}}$ is called a (proper) $\delta$-covering of $\calS$. The packing number $P(\calS,\delta)$ is the maximal integer such that there are points $x_i \in \calS$, $i \in \bbrac{P(\calS,\delta)}$, such that $\normm{x_i-x_j} > \delta$ for all $i,j \in \bbrac{P(\calS,\delta)}$, $i \neq j$. The set of such points is called a $\delta$-net of $\calS$.
\end{definition}
Packing and covering numbers are closely related as one always has
\begin{equation}\label{eq:packcovnumb}
P(\calS,2\delta) \leq N(\calS,\delta) \le P(\calS,\delta) .
\end{equation}

\begin{definition}\label{def:gwidth}
	The Gaussian width of a subset $\calS \subset \bbR^n$ is defined as
	\[
	w(\calS) \eqdef \esp{\sigma_{\calS}(Z)}, \qwhereq Z \sim \calN(0,\Id_n) .
	\]
\end{definition}
The Gaussian width is a summary geometric quantity that, informally speaking, measures the size of the bulk of a set in $\bbR^n$. This concept plays a central role in high-dimensional probability and its applications. It has appeared in the literature in different contexts \cite{gordon1988}. In particular, it has been used to establish sample complexity bounds to ensure exact recovery (noiseless case) and mean-square estimation stability (noisy case) for low-complexity penalized estimators from linear Gaussian measurements; see e.g.~\cite{chandrasekaran_convex_2012,amelunxen_living_2014,oymak_squared-error_2013,vaiterimaiai13}. The Gaussian width has deep connections to convex geometry and it enjoys many useful properties. It is well-known that it is positively homogeneous, monotonic w.r.t inclusion, and invariant under orthogonal transformations. Moreover, one has 
\[w(\calS)=w(\cl{\calS})=w(\conv{\calS})=w(\clconv{\calS}).\]
This comes from the properties of the support function. A lower bound for the Gaussian width of a bounded set can be obtained via Sudakov's minoration.
\begin{proposition}\label{prop:Sudakov}
	Let $\calS$ be a bounded set. Then for any $\delta > 0$ small enough, we have
	\[
	w(\calS) \geq \frac{\delta}{2} \sqrt{\log\Ppa{N(\calS,\delta)}}.
	\]
\end{proposition}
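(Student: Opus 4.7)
\medskip

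\noindent\textbf{Proof plan.} The plan is to derive this as the classical Sudakov minoration inequality, via a packing argument combined with the Slepian/Sudakov--Fernique Gaussian comparison.

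First, I would convert the covering statement into a packing statement using \eqref{eq:packcovnumb}. Since $N(\calS,\delta)\leq P(\calS,\delta)$, there exist $N\eqdef N(\calS,\delta)$ points $x_1,\ldots,x_N\in\calS$ that form a $\delta$-net, i.e. $\normm{x_i-x_j}>\delta$ for every $i\neq j$. The whole point of this first step is to replace the geometric covering quantity by a collection of well-separated points in $\calS$ on which we can build an informative Gaussian process.

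Next, I would pass to the Gaussian side. Let $Z\sim\calN(0,\Id_n)$ and define the finite centered Gaussian process $X_i\eqdef\pscal{Z,x_i}$, $i\in\bbrac{N}$. Since $x_i\in\calS$, the support function satisfies $\sigma_\calS(Z)\geq\max_i X_i$, hence by Definition~\ref{def:gwidth},
\[
w(\calS)=\esp{\sigma_\calS(Z)}\;\geq\;\esp{\max_{i\in\bbrac{N}} X_i}.
\]
The increments of this process obey $\esp{(X_i-X_j)^2}=\normm{x_i-x_j}^2>\delta^2$. I would then introduce the auxiliary process $Y_i\eqdef \tfrac{\delta}{\sqrt{2}}\,g_i$ with $g_1,\ldots,g_N$ iid $\calN(0,1)$, whose pairwise increment variance is exactly $\delta^2$, hence dominated by that of $X$. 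Sudakov--Fernique's comparison inequality then yields
\[
\esp{\max_{i\in\bbrac{N}} X_i}\;\geq\;\esp{\max_{i\in\bbrac{N}} Y_i}\;=\;\frac{\delta}{\sqrt{2}}\,\esp{\max_{i\in\bbrac{N}} g_i}.
\]

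To conclude, I would invoke the standard asymptotic for the maximum of iid standard Gaussians: $\esp{\max_{i\in\bbrac{N}} g_i}\sim\sqrt{2\log N}$ as $N\to\infty$. In particular, for $N$ large enough we have $\esp{\max_i g_i}\geq \sqrt{\log N}$. Since $N(\calS,\delta)$ diverges as $\delta\downarrow 0$ (assuming $\calS$ is not a singleton; otherwise the bound is trivial), this regime is reached as soon as $\delta$ is small enough, giving
\[
w(\calS)\;\geq\;\frac{\delta}{\sqrt{2}}\,\sqrt{\log N(\calS,\delta)}\;\geq\;\frac{\delta}{2}\,\sqrt{\log N(\calS,\delta)}.
\]

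The argument is essentially textbook, so there is no serious obstacle. The only delicate point is the constant $\tfrac12$: Sudakov--Fernique combined with the sharp Gaussian maximum asymptotic actually delivers the tighter prefactor $\tfrac{1}{\sqrt{2}}$, but only asymptotically in $N$, which is precisely what the qualifier ``for $\delta>0$ small enough'' in the statement absorbs. If a fully non-asymptotic bound (with a possibly worse explicit constant) were desired, one would replace the asymptotic for $\esp{\max g_i}$ by an explicit lower bound such as that obtained through Jensen's inequality on $\esp{\max g_i}\geq t\,\Pr\bpa{\max g_i\geq t}$ with $t$ chosen of order $\sqrt{\log N}$.
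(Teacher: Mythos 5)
Your proof is correct and follows essentially the same route as the paper: pass from the covering number to a $\delta$-separated subset of $\calS$ via \eqref{eq:packcovnumb}, lower-bound $w(\calS)$ by the expected maximum of the finite Gaussian vector $(\pscal{Z,x_i})_i$, and invoke Sudakov's minoration for that vector --- the paper simply cites this last step as a black box (\cite[Theorem~13.4]{BoucheronLugosi}), whereas you unpack it via the Sudakov--Fernique comparison with an iid auxiliary process. The one blemish is your parenthetical claim that $N(\calS,\delta)$ diverges as $\delta\downarrow 0$ for any non-singleton $\calS$ (false when $\calS$ is finite), but this is harmless since your non-asymptotic fallback, or the trivial observation that the right-hand side vanishes as $\delta\to 0$ while $w(\calS)$ is a fixed positive number, covers that case.
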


\begin{proof}
	Let $\calS_\delta$ be an $\delta$-net of $\calS$. Thus, since $\calS_{\delta} \subset \calS$, we have
	\[
	w(\calS) \geq w(\calS_{\delta}) = \esp{\max_{x_i \in \calS_\delta} \pscal{Z,x_i}} .
	\]
	Since $\transp{[\pscal{Z,x_i}: i \in \bbrac{P(\calS,\delta)}]}$ is a zero-mean Gaussian vector, we can invoke the lower bound in \cite[Theorem~13.4]{BoucheronLugosi} to get
	\begin{align*}
	w(\calS) 
	&\geq \esp{\max_{x_i \in \calS_\delta} \pscal{Z,x_i}} \\
	&\geq \frac{1}{2} \min_{i \neq j, x_i,x_j \in \calS_\delta} \sqrt{\esp{|\pscal{x_i-x_j,Z}|^2}\log\Ppa{P(\calS,\delta)}} \\
	&= \frac{1}{2} \min_{i \neq j, x_i,x_j \in \calS_\delta} \normm{x_i-x_j} \sqrt{\log\Ppa{P(\calS,\delta)}} \\
	&\geq \frac{\delta}{2} \sqrt{\log\Ppa{N(\calS,\delta)}} ,
	\end{align*}
	where we used and \eqref{eq:packcovnumb} and the definition of the packing number in the last inequality. 
\end{proof}

%
%
%
%

The following proposition gives the concentration of measure in the Gauss space. A comprehensive account can be found in \cite{Ledoux2005}.
\begin{proposition}\label{pro:gauss-space}Let $f$ be a real-valued $K-$Lipschitz continuous on $\bbR^n$. Let $Z$ be the standard normal random vector in $\bbR^n$. Then  for every $t\geq0$ one has
	\[
	\prob\Ppa{f(Z)-\esp{f(Z)}\geq t}\leq \exp(-t^2/2K^2).
	\]
\end{proposition}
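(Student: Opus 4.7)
The plan is to show that $f(Z)-\esp{f(Z)}$ is sub-Gaussian with variance proxy $K^2$ via the \emph{Herbst entropy method}, and then conclude by a Chernoff bound. I would first reduce to the smooth case: convolving $f$ with a Gaussian kernel of small bandwidth $\varepsilon$ yields a $C^\infty$ function $f_\varepsilon$ that is still $K$-Lipschitz (so $\normm{\nabla f_\varepsilon}\leq K$ pointwise) and satisfies $f_\varepsilon(Z)\to f(Z)$ almost surely and in $L^1$ as $\varepsilon\to 0$; once the inequality is established for $f_\varepsilon$, dominated convergence passes the tail bound to $f$.

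The heart of the argument is the Gaussian logarithmic Sobolev inequality
\[
\mathrm{Ent}(g^2) \eqdef \esp{g^2\log g^2} - \esp{g^2}\log\esp{g^2} \;\leq\; 2\esp{\normm{\nabla g}^2},
\]
applied to $g=e^{\lambda f(\cdot)/2}$ for $\lambda>0$. Using $\normm{\nabla g}^2 \leq \lambda^2 K^2 g^2/4$ converts this into the Herbst differential inequality
\[
\lambda H'(\lambda) - H(\lambda)\log H(\lambda) \;\leq\; \tfrac{\lambda^2 K^2}{2}H(\lambda), \qquad H(\lambda) \eqdef \esp{e^{\lambda f(Z)}}.
\]
Setting $\psi(\lambda) = \lambda^{-1}\log H(\lambda)$ with $\psi(0^+)=\esp{f(Z)}$, this reduces to $\psi'(\lambda)\leq K^2/2$, which integrates into the sub-Gaussian moment generating function bound
\[
\log\esp{e^{\lambda(f(Z)-\esp{f(Z)})}} \;\leq\; \tfrac{\lambda^2 K^2}{2} \qquad \forall\,\lambda\geq 0.
\]

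The final step is a standard Chernoff bound: for any $t,\lambda>0$,
\[
\prob\pa{f(Z)-\esp{f(Z)}\geq t} \;\leq\; e^{-\lambda t}\esp{e^{\lambda(f(Z)-\esp{f(Z)})}} \;\leq\; \exp\pa{\tfrac{\lambda^2 K^2}{2} - \lambda t},
\]
and optimizing at $\lambda=t/K^2$ yields the stated $\exp(-t^2/2K^2)$. The main obstacle is the Gaussian log-Sobolev inequality itself, whose proof (via the Ornstein--Uhlenbeck semigroup and Bakry--\'Emery $\Gamma_2$ calculus, or Gross's original two-point derivation combined with a tensorization and central limit theorem argument) is non-elementary; I would simply invoke it from \cite{Ledoux2005}. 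A secondary technicality is the mollification step, where one must verify that the $K$-Lipschitz constant is preserved by the convolution and that the tail bound for $f_\varepsilon(Z)$ indeed passes to $f(Z)$ in the limit $\varepsilon\to 0$.
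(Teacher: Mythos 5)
Your argument is correct, but note that the paper does not actually prove this proposition: it is stated as a standard background fact on concentration in Gauss space, with a pointer to \cite{Ledoux2005} in lieu of a proof. What you have written is a faithful sketch of one of the canonical derivations found in that reference, namely the Herbst argument: mollify to reduce to smooth $K$-Lipschitz $f$ (convolution with a probability kernel preserves the Lipschitz constant, and the convergence $f_\varepsilon\to f$ is in fact uniform, so passing the tail bound to the limit is unproblematic); apply the Gaussian logarithmic Sobolev inequality to $g=e^{\lambda f/2}$ to obtain $\lambda H'(\lambda)-H(\lambda)\log H(\lambda)\leq \tfrac{\lambda^2K^2}{2}H(\lambda)$; integrate the resulting bound $\psi'(\lambda)\leq K^2/2$ with $\psi(0^+)=\esp{f(Z)}$ to get the sub-Gaussian moment generating function estimate; and conclude by Chernoff with $\lambda=t/K^2$. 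All of these steps check out, including the constants, and the only external input (the log-Sobolev inequality itself) is legitimately cited rather than reproved. The one comparison worth making is that the sharper, dimension-free constant $\exp(-t^2/(2K^2))$ that the paper asserts (and that you obtain) genuinely requires a route of this strength; a naive union/covering or a bounded-differences argument would not give it, so your choice of method is not just one option among equals but essentially forced if one wants the stated constant.
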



\section{Noiseless Recovery}\label{sec:unique}
We here study well-posedness (existence and uniqueness of minimizers) of \eqref{eq:eqminregulPRexact}, which in turn will allow us to state when exact recovery is possible. In this section, we use the shorthand notation 
\[
\calS_{\avy,0} \eqdef \Argmin_{A^{-1}\Ppa{\overbar{\calF}}}(R) .
\]

\subsection{Existence and compactness}\label{sec:uniquenoiseless}

The following result provides sufficient conditions under which problem \eqref{eq:eqminregulPRexact} is well-posed. It does not need convexity of $R$.
\begin{proposition}\label{prop:existencenoiseless}
Let $R: \bbR^n \to \barR$ be a proper and lsc function. Assume that:
\begin{enumerate}[label=(\roman*)] 
\item $A(\dom(R)) \cap \overbar{\calF} \neq \emptyset$. \label{H:domRA}
\item $R$ is non-negative\footnote{In fact, we need $R$ to be only bounded from below, and there is no loss of generality by taking the lower bound as $0$ by a trivial translation argument.}. \label{H:Rbnd}
\item $\Ker(R_\infty) \cap \Ker(A) =\{0\}$. \label{H:RinfA}
\end{enumerate}
Then $\calS_{\avy,0}$ is a non-empty compact set. 
\end{proposition}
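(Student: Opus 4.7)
The plan is to exploit the finite cardinality of $\overbar{\calF}$ and reduce the problem to a family of affinely-constrained subproblems, each of which will be shown to be coercive via the asymptotic-cone hypothesis (iii). Writing
\[
\inf\Ba{R(x) : Ax \in \overbar{\calF}} \;=\; \min_{w \in \overbar{\calF}}\, \inf\Ba{R(x) : Ax = w} ,
\]
and observing that the outer minimum is taken over a finite set ($|\overbar{\calF}| = 2^m$), $\calS_{\avy,0}$ will be expressed as the union, over those $w \in \overbar{\calF}$ that attain the overall minimum value, of the argmin sets $\Argmin\Ba{R(x) : Ax = w}$. Hypothesis (i) guarantees that at least one such subproblem is feasible, and since a finite union of compact sets is compact, it suffices to prove that each feasible subproblem admits a non-empty compact argmin.

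To establish this, I would show coercivity of $R + \iota_{A^{-1}(w)}$ by a direct subsequence argument tailored to avoid any convexity assumption on $R$. Given a sequence $(x_k)$ with $Ax_k = w$ and $t_k \eqdef \normm{x_k} \to +\infty$, passing to a subsequence I may assume that $z_k \eqdef x_k/t_k$ converges to some $d \in \mathbb{S}^{n-1}$. Since $A z_k = w/t_k \to 0$, it follows that $d \in \Ker(A) \setminus \Ba{0}$, and the very definition \eqref{eq:AF} of the asymptotic function yields
\[
R_\infty(d) \;\leq\; \liminf_{k\to\infty} \frac{R(t_k z_k)}{t_k} \;=\; \liminf_{k\to\infty}\frac{R(x_k)}{\normm{x_k}} .
\]
Hypothesis (ii) forces $R_\infty \geq 0$, and hypothesis (iii) prohibits $d$ from belonging to $\Ker(R_\infty)$, so together they upgrade the previous inequality to $R_\infty(d) > 0$ and therefore $R(x_k) \to +\infty$. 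Consequently every sublevel set of $R + \iota_{A^{-1}(w)}$ is bounded; by lower semicontinuity of $R$ and closedness of $A^{-1}(w)$ they are also closed, hence compact in $\bbR^n$.

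The conclusion then assembles routinely: for each feasible $w$, the finite infimum of $R + \iota_{A^{-1}(w)}$ is attained on its compact non-empty sublevel sets, and the argmin is itself such a sublevel set, hence non-empty and compact. Taking the smallest value across the finitely many feasible $w \in \overbar{\calF}$ and unioning the corresponding argmins exhibits $\calS_{\avy,0}$ as a non-empty finite union of compact sets, hence compact. The main subtlety, and the only place where the absence of convexity of $R$ must be handled carefully, is the coercivity step: convenient identities such as $(R + \iota_{A^{-1}(w)})_\infty = R_\infty + \iota_{\Ker A}$ are not available in general, which is why I rely on the direct definition of $R_\infty$ together with the non-negativity (ii) to turn $R_\infty(d) \neq 0$ into $R_\infty(d) > 0$.
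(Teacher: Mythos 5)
Your proof is correct, but it takes a genuinely different route from the paper's. The paper works directly with $G = R + \iota_{\overbar{\calF}} \circ A$ and invokes the asymptotic-function calculus of Auslender--Teboulle: the sum rule $G_\infty(z) \geq R_\infty(z) + \iota_{\overbar{\calF}_\infty}(Az)$ under the qualification \ref{H:domRA}, the fact that $\overbar{\calF}_\infty=\{0\}$ since $\overbar{\calF}$ is bounded, and then their existence criterion ($G_\infty>0$ off the origin implies a non-empty compact argmin). You instead exploit the finite cardinality of $\overbar{\calF}$ to split the problem into $2^m$ affinely-constrained subproblems and prove coercivity of each feasible one by hand, via the normalized-subsequence argument $z_k = x_k/\normm{x_k} \to d \in \Ker(A)\cap\mathbb{S}^{n-1}$ and the definitional inequality $R_\infty(d) \leq \liminf_k R(x_k)/\normm{x_k}$; all the steps check out, including the use of \ref{H:Rbnd} to get $R_\infty \geq 0$ and of \ref{H:RinfA} to upgrade $R_\infty(d)\neq 0$ to $R_\infty(d)>0$. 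What each approach buys: the paper's argument is shorter modulo the cited results and, as its own remark emphasizes, extends verbatim to any bounded closed $\overbar{\calF}$ (hence to other non-linear inverse problems), whereas your argument is fully self-contained and elementary. Note, though, that your decomposition into subproblems is not actually needed: the same normalization argument applied to a minimizing-like sequence for $R+\iota_{\overbar{\calF}}\circ A$ uses only that $Ax_k$ stays bounded (so $Az_k \to 0$), not that $\overbar{\calF}$ is finite, and running it that way would recover the same generality as the paper while keeping your elementary style.
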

\begin{remark}\label{rmk:existence}{\ }
\begin{itemize}
    \item A typical case where all above assumptions are in force is when $R$ is coercive, has full domain and is bounded from below.
    \item  This result is general and goes beyond the phase retrieval problem, indeed this result can be applied  for instance for general non-linear inverse problems with a suitable definition of $\overbar{\calF}$.
\end{itemize}    
\end{remark}

\begin{proof}
The range of $R_\infty$ is on $\bbR_+$ since $R$ verifies \ref{H:Rbnd}. Define $G = R + \iota_{\overbar{\calF}} \circ A$. In view of the domain qualification assumption \ref{H:domRA}, we get by \cite[Proposition~2.6.1 and  Proposition 2.6.3]{AT03} that
\[
G_\infty(z) \geq R_\infty(z) + \iota_{\overbar{\calF}_\infty}(Az) .
\]
Since $\overbar{\calF}$ is bounded, we get that $\overbar{\calF}_\infty = \ens{0}$. Moreover, the range of $R_{\infty}$ is on $\bbR_+$ since $R$ is bounded from below. Thus
\[
G_\infty(z) > 0 \qforallq z \notin \Ker(R_\infty) \cap \Ker(A) .
\]
It then follows from \cite[Corollary~3.1.2]{AT03} that \ref{H:RinfA} entails the claim. 
\end{proof}

\subsection{Deterministic recovery condition}\label{sec:recoverynoiselessdet}

\begin{definition}[Descent cone] The descent cone of $R$ at $\avx$ is the conical hull of the sublevel set of $R$ at $\avx$, \ie 
\begin{equation}\label{eq:descone}
\calD_R(\avx) \eqdef \bigcup_{t > 0}\enscond{z}{R(\avx+t z) \leq R(\avx)} .
\end{equation}
\end{definition}
The tangent cone of the sublevel set of $R$ at $\avx$, denoted $\calT_R(\avx)\eqdef\clcone\pa{\calS_{\lev R}(\avx)-\avx}$, is the closure of $\calD_R(\avx)$. The normal cone of the sublevel set of $R$ at $\avx$ is 
\[
\calN_R(\avx)\eqdef\Ba{s:\pscal{s,z-\avx}\leq0,z\in\calS_{\lev R}(\avx)},
\]
and we have $\calN_R(\avx) = \calT_R(\avx)^\circ$, where we recall that $^\circ$ stand for polarity (see Definition~\ref{defn:convex-polarset}).

\begin{theorem}\label{thm:exactrecoverydet}
Suppose that $\calS_{\avy,0} \neq \emptyset$, and that:
\begin{enumerate}[label=\rm({\bf{H.\arabic*}})] 
\item $R \in \Gamma_0(\bbR^n)$ and is even symmetric. \label{H:Reven}
\item \label{H:injdescent}
$\forall I \subset \bbrac{m}, |I| \geq m/2$ 
\[
\Ker(A^I) \cap \calD_R(\avx) = \ens{0} .
\]
\end{enumerate}
Then the recovery of $\avx$ (up to a global sign) is exact by solving \eqref{eq:eqminregulPRexact}, \ie   
\[
\calS_{\avy,0} = \avX .
\]
\end{theorem}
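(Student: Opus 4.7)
The plan is to show both inclusions $\calS_{\avy,0} \subseteq \avX$ and $\avX \subseteq \calS_{\avy,0}$, with the real content being the forward inclusion, which will exploit \ref{H:injdescent} applied along a carefully chosen sign pattern.

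First, I would pick any minimizer $x^\star \in \calS_{\avy,0}$, which exists by assumption. Since $\avx$ is itself feasible (indeed $|A\avx|=\sqrt{\avy}$), optimality gives $R(x^\star) \leq R(\avx)$, and by the evenness of $R$ we also have $R(x^\star) \leq R(-\avx)$. The feasibility of $x^\star$ means $|Ax^\star|=|A\avx|$ entry-wise, so there exists a sign pattern $\varepsilon \in \{-1,+1\}^m$ with $A x^\star = \diag(\varepsilon) A\avx$. Partitioning the indices, set
\[
I \eqdef \ens{r\in\bbrac{m} : \varepsilon[r]=+1}, \qquad I^c \eqdef \ens{r\in\bbrac{m} : \varepsilon[r]=-1}.
\]
Then by construction $A^I(x^\star - \avx) = 0$ and $A^{I^c}(x^\star + \avx) = 0$. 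A pigeonhole argument ensures that at least one of the two sets has cardinality $\geq m/2$, which is exactly the regime where \ref{H:injdescent} bites.

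I would then split into two symmetric cases. In the case $|I|\geq m/2$, let $z = x^\star - \avx \in \Ker(A^I)$; since $R(\avx + z) = R(x^\star) \leq R(\avx)$, the definition \eqref{eq:descone} places $z \in \calD_R(\avx)$, so \ref{H:injdescent} forces $z=0$, i.e.\ $x^\star = \avx$. In the case $|I^c|\geq m/2$, I would apply the evenness trick: set $w = -x^\star - \avx$, so that $A^{I^c} w = 0$ and, using $R(\avx+w)=R(-x^\star)=R(x^\star)\leq R(\avx)$, one gets $w \in \calD_R(\avx)$; again \ref{H:injdescent} yields $w=0$, i.e.\ $x^\star = -\avx$. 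This establishes $\calS_{\avy,0}\subseteq\avX$.

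For the reverse inclusion, I would note that $\avx$ is feasible and, by what we just proved, every minimizer lies in $\{\pm\avx\}$ with minimum value $R(x^\star)$; by evenness $R(\avx)=R(-\avx)=R(x^\star)$, so both $\avx$ and $-\avx$ achieve the infimum. Hence $\avX \subseteq \calS_{\avy,0}$.

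The main conceptual hurdle is the second case, where naively one would like to conclude $x^\star - \avx \in \calD_R(\avx)$ but $\Ker(A^{I^c})$ only controls $x^\star+\avx$; this is where the symmetry assumption on $R$ is essential, letting us translate the problem around $-\avx$ while still invoking the descent cone at $\avx$. The convexity and lower semicontinuity hypotheses in \ref{H:Reven} are not strictly needed for the argument as written (even symmetry and properness suffice), but they ensure that $\calD_R(\avx)$ behaves well and are consistent with the framework used later in the paper.
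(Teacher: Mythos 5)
Your proof is correct and follows essentially the same route as the paper: partition the measurements by sign agreement between $Ax^\star$ and $A\avx$, apply the pigeonhole principle to get $|I|\geq m/2$ or $|I^c|\geq m/2$, and use evenness of $R$ to reduce the second case to the descent cone at $\avx$. The only difference is presentational: where the paper invokes \cite[Proposition~2.1]{chandrasekaran_convex_2012} for the linearized problem $A^I x = A^I\avx$, you inline that argument by observing directly that $x^\star \mp \avx \in \Ker(A^I)\cap\calD_R(\avx)$, which also makes explicit that convexity is not needed for this step.
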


\begin{remark}{\ }
\begin{itemize}
    \item Assumption \ref{H:Reven} is quite general and encompasses any convex symmetric gauge. This includes the $\ell_1$, $\ell_{1,2}$ and $\ell_{\infty}-$norm regularizers as well as their analysis-type counterparts.
    \item Of course, assumption \ref{H:injdescent} is vacuous if $\calD_R(\avx)$ is empty, which is the case if the set of minimizers is empty. The assumptions of Proposition~\ref{prop:existencenoiseless} ensure that this cannot be the case.
\end{itemize}
\end{remark}

\begin{proof}
The proof is a generalization of that \cite[Theorem~2.2]{Voroninski14} beyond the $\ell_1$-norm, and exploits the structure of the constraint set $\overbar{\calF}$. Let $b \eqdef A\avx$, and for any sign vector $\eps \in \ens{1,-1}^m$, set $b_\eps \eqdef \transp{[\eps[r]b[r]: \; r \in \bbrac{m}]}$. Consider the minimization problem
\[
\inf_{x \in \bbR^n} R(x) \qsubjq Ax = b_\eps ,
\]
and denote $x_\eps$ any minimizer, if it exists. If $x_\eps$ does not exist, there is nothing to say. We claim that if $x_\eps$ exists, then under our assumptions, for any sign vector $\eps$,
\[
R(\avx) \leq R(x_\eps)
\]
with equality iff $x_\eps = \pm \avx$.

Observe that $x_\eps \in A^{-1}(\overbar{\calF})$. Thus $\pscal{a_r,x_\eps} = \pm b[r]$ for all $r \in \bbrac{m}$. Let
\[
I = \enscond{r \in \bbrac{m}}{\pscal{a_r,x_\eps} = b[r]} .
\]
Thus either $|I| \geq m/2$ or $|I^c| \geq m/2$. Assume the first case holds. This implies that $A^I x_\eps = A^{I} \avx$. From \cite[Proposition~2.1]{chandrasekaran_convex_2012}, it follows using \ref{H:injdescent} and convexity of $R$ that
\[
\Argmin_{x \in \bbR^n} \ens{R(x) \tsubjt A^I x = A^{I} \avx} = \ens{\avx} ,
\]
and thus, since $x_\eps$ is a feasible point,
\[
R(\avx) \leq R(x_\eps) ,
\]
with equality holding if and only if $x_\eps = \avx$. For the case where $|I^c| \geq m/2$, we have $-A^{I^c} x_\eps = A^{I^c} \avx$. Arguing similarly as before using also that $R$ is even, we get
\[
\Argmin_{x \in \bbR^n} \ens{R(x) \tsubjt -A^{I^c} x = A^{I^c} \avx} = 
-\Argmin_{x \in \bbR^n} \ens{R(x) \tsubjt A^{I^c} x = A^{I^c} \avx} = \ens{-\avx} ,
\]
Thus, in this case
\[
R(\avx) \leq R(x_\eps) ,
\]
with equality holding if and only if $x_\eps = -\avx$. Since this holds for any $\eps \in \ens{1,-1}^m$ and any minimizer of \eqref{eq:eqminregulPRexact} is of the form $x_\eps$ (when the latter exists), we conclude.
\end{proof}

\subsection{Recovery from Gaussian measurements}\label{sec:recoverynoiselessgaussian}
The goal now is to give sample complexity bounds for the claims of Theorem~\ref{thm:exactrecoverydet} to hold true when $A$ is a Gaussian measurement map, \ie the entries of $A$ are \iid $\calN(0,1/m)$. We start with the following preparatory lemma.

\begin{lemma}\label{lem:sigAI}
Let $A: \bbR^n \to \bbR^m$ be a Gaussian map with \iid $\calN(0,1/m)$ entries. Let $\delta \in ]0,1[$ and $\nu = \frac{1}{18}\sqrt{\frac{\pi}{2}}$. Suppose that $x \in \bbR^n$ is a fixed vector. Then 
\[
\min_{I \subset \bbrac{m}, |I| \geq m/2} \normm{A^{I} x} \geq \nu/2\normm{x}
\]
with probability at least $1-2e^{-\frac{\nu^2m}{8}}$, and 
\[
\max_{I \subset \bbrac{m}, |I| \geq m/2} \normm{A^{I} x} \leq (1+\delta)\normm{x}
\]
with probability at least $1-e^{-\frac{\delta^2m}{2}}$.
\end{lemma}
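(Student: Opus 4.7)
The upper bound is essentially free from standard Gaussian concentration for $\normm{Ax}$. The interesting estimate is the lower bound, which I plan to handle by reducing the $2^m$-fold combinatorial minimum to an order-statistic computation and then counting how many of the scalars $\pscal{a_r,x}$ are abnormally small.

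\paragraph{Upper bound.} For any $I \subset \bbrac{m}$, monotonicity gives $\normm{A^I x}^2 = \sum_{r\in I}\pscal{a_r,x}^2 \leq \normm{Ax}^2$, so it suffices to prove $\normm{Ax} \leq (1+\delta)\normm{x}$ with the desired probability. Writing $A = G/\sqrt m$ with $G$ having \iid\ standard Gaussian entries, the map $G \mapsto \normm{Gx}/\sqrt m$ is $\normm{x}/\sqrt m$-Lipschitz in Frobenius norm, and Jensen gives $\esp{\normm{Ax}} \leq \sqrt{\esp{\normm{Ax}^2}} = \normm{x}$. Proposition~\ref{pro:gauss-space} applied to this Lipschitz function then yields the one-sided tail $\prob(\normm{Ax} \geq (1+\delta)\normm{x}) \leq e^{-\delta^2 m/2}$.

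\paragraph{Lower bound, reduction step.} Normalise $\normm{x}=1$ and let $Z_r = \sqrt m \pscal{a_r,x}$ so that the $Z_r$ are \iid\ $\calN(0,1)$. The combinatorial minimum collapses: $\min_{|I|\geq m/2}\normm{A^I x}^2$ is precisely the sum of the $\lceil m/2 \rceil$ smallest values among the $Z_r^2/m$. The lower bound will therefore follow as soon as I can show that only a small fraction of the $|Z_r|$ can be very small.

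\paragraph{Lower bound, concentration for the count.} The unimodal upper bound on the standard Gaussian density gives $\prob(|Z| < \nu) \leq \sqrt{2/\pi}\,\nu = 1/18$ for the specific choice $\nu = \frac{1}{18}\sqrt{\pi/2}$. Let $N = \#\{r : |Z_r|<\nu\}$. Hoeffding's inequality for this sum of \iid\ Bernoullis yields $\prob(N \geq m/6) \leq \exp(-2m(1/9)^2) = \exp(-2m/81)$. On the complementary event, every $I$ with $|I|\geq m/2$ contains at least $m/2-m/6 = m/3$ indices with $|Z_r|\geq \nu$, so
\[
\normm{A^I x}^2 \geq \frac{m}{3}\cdot\frac{\nu^2}{m} = \frac{\nu^2}{3} > \frac{\nu^2}{4},
\]
and after restoring $\normm{x}$ this gives $\normm{A^I x} > \frac{\nu}{2}\normm{x}$ uniformly in $I$. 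Since $\nu^2/8 = \pi/5184 < 2/81$, the obtained bound $1-\exp(-2m/81)$ is stronger than the target $1-2\exp(-\nu^2 m/8)$, so the stated claim is proved.

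\paragraph{Where the difficulty lies.} There is no genuine obstacle; the key conceptual step is recognising that the $2^m$ subsets collapse to a single order-statistic, after which the rest is a textbook Bernoulli tail estimate. A more elegant but less transparent alternative would be to apply Gaussian concentration directly to the $\normm{x}/\sqrt m$-Lipschitz functional $A \mapsto \min_{|I|\geq m/2}\normm{A^I x}$, which produces the exact exponent $\nu^2 m/8$ appearing in the statement; this variant, however, still requires a lower bound on the median of the functional, which reduces back to the same counting argument.
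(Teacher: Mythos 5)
Your proof is correct. The upper bound is essentially the paper's own argument: monotonicity reduces to $\normm{Ax}$, and Gaussian concentration (Proposition~\ref{pro:gauss-space}) applied to the $\normm{x}/\sqrt{m}$-Lipschitz functional of the standardized matrix finishes it (your computation $\esp{\normm{Ax}}\leq\normm{x}$ via Jensen is the right one; the paper's displayed $\normm{x}/\sqrt{m}$ is a typo that does not affect the conclusion). For the lower bound, however, you take a genuinely different route: the paper simply invokes \cite[Lemma~4.4]{Voroninski14} as a black box, whereas you give a self-contained argument. Your two key observations --- that the $2^m$-fold minimum over $|I|\geq m/2$ collapses to the sum of the $\lceil m/2\rceil$ smallest values of $\pscal{a_r,x}^2$ because the summands are non-negative, and that the count $N=\#\{r:|Z_r|<\nu\}$ of small coordinates is a Binomial variable with success probability at most $\sqrt{2/\pi}\,\nu=1/18$ controlled by Hoeffding --- are both sound, and the arithmetic checks out: on $\{N<m/6\}$ every admissible $I$ retains more than $m/3$ indices with $|Z_r|\geq\nu$, giving $\normm{A^Ix}^2\geq\nu^2/3>\nu^2/4$, and the failure probability $e^{-2m/81}$ is indeed dominated by $2e^{-\nu^2 m/8}$ since $\nu^2/8=\pi/5184\ll 2/81$. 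What your approach buys is transparency and self-containedness (and in fact a strictly better exponent than stated); what the citation-based route buys is brevity and consistency of the constant $\nu$ with the source it is imported from. Either way the statement as written is established.
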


\begin{proof}
The first claim follows from \cite[Lemma~4.4]{Voroninski14}. The second one follows from the fact that
\[
\normm{A^{I} x} \leq \normm{A x} \qforallq I \subset \bbrac{m} ,  
\]
and then use Proposition~\ref{pro:gauss-space} since $A \mapsto \normm{A x}$ is $\normm{x}$-Lipschitz continuous and $\esp{\normm{Ax}} \leq \normm{x}/\sqrt{m}$.
\end{proof}

\begin{theorem}\label{thm:exactrecoverygaussian}
Suppose that \ref{H:Reven} holds. Let $\nu$ be as defined in Lemma~\ref{lem:sigAI}. Let $A: \bbR^n \to \bbR^m$ be a Gaussian map with \iid $\calN(0,1/m)$ entries such that
\[
m \geq \frac{8(1+t)}{\nu^2} \log\Ppa{N\Ppa{\calD_R(\avx) \cap \bbS^{n-1},\eps}} ,
\]
for some $\eps \in ]0,\nu/(2+\nu)[$ and $t > 0$. Then with probability at least $1-3e^{-\frac{t\nu^2m}{8}}$, the recovery of $\avx$ (up to a global sign) is exact by solving \eqref{eq:eqminregulPRexact}.
\end{theorem}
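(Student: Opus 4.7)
The plan is to invoke the deterministic Theorem~\ref{thm:exactrecoverydet} and establish its hypothesis \ref{H:injdescent} with the stated probability. Since \ref{H:Reven} makes $R$ convex and even, the descent cone $\calD_R(\avx)$ is a convex cone, and writing $K \eqdef \calD_R(\avx) \cap \bbS^{n-1}$, condition \ref{H:injdescent} is equivalent to the quantitative statement $\inf_{x \in K} \min_{I \subset \bbrac{m},\,|I|\geq m/2} \normm{A^I x} > 0$. My task therefore reduces to proving this inequality with high probability, which I will do by a covering-and-concentration scheme using Lemma~\ref{lem:sigAI}.

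Let $\calN_\eps \subseteq K$ be a proper $\eps$-covering of $K$ of cardinality $N \eqdef N(K,\eps)$. For each fixed $y \in \calN_\eps$, Lemma~\ref{lem:sigAI} gives the lower deviation $\min_{|I|\geq m/2}\normm{A^I y} \geq \nu/2$ with probability at least $1 - 2e^{-\nu^2 m/8}$, and---taking the tuning parameter $\delta = \nu/2$ to balance exponents---the matching upper deviation $\max_{|I|\geq m/2}\normm{A^I y} \leq 1 + \nu/2$ with probability at least $1 - e^{-\nu^2 m/8}$. A union bound over the $N$ points of $\calN_\eps$ makes both estimates hold simultaneously for every net point and every admissible $I$ with probability at least $1 - 3 N e^{-\nu^2 m/8}$.

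The next step transfers the lower bound from $\calN_\eps$ to all of $K$: for $x \in K$ and $y \in \calN_\eps$ nearest to $x$ (so that $\normm{x-y}\leq\eps$), the triangle inequality gives $\normm{A^I x} \geq \normm{A^I y} - \normm{A^I(x-y)}$, and combining $\normm{A^I y}\geq\nu/2$ with a uniform control of the form $\normm{A^I(x-y)} \leq (1+\nu/2)\eps$ produces $\normm{A^I x} \geq \nu/2 - (1+\nu/2)\eps$, which is strictly positive precisely when $\eps < \nu/(2+\nu)$---exactly the hypothesized range on $\eps$. I expect the main technical hurdle to lie in this uniformization of the upper bound on $\normm{A^I(x-y)}$: the pointwise upper bound of Lemma~\ref{lem:sigAI} only applies to a fixed vector, while a naive operator-norm bound on $A^I$ would introduce an undesired ambient-dimension factor, so the argument must exploit the conic structure of $K$ (for instance via an iterated covering of the difference set, or by re-applying the upper bound of Lemma~\ref{lem:sigAI} along fixed directions generated by the net). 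Once this uniform bound is in hand, plugging the sample complexity assumption $m \geq \tfrac{8(1+t)}{\nu^2}\log N$ into the union-bound probability $3Ne^{-\nu^2 m/8}$ and simplifying collapses it to the claimed tail estimate $3e^{-t\nu^2 m/8}$, thereby yielding exact recovery via Theorem~\ref{thm:exactrecoverydet}.
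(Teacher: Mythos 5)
Your proposal follows the paper's proof essentially verbatim: an $\eps$-net of $\calD_R(\avx)\cap\bbS^{n-1}$, Lemma~\ref{lem:sigAI} with $\delta=\nu/2$ at the net points, a union bound over the $N(\calD_R(\avx)\cap\bbS^{n-1},\eps)$ points, the triangle inequality to transfer the lower bound $\nu/2-(1+\nu/2)\eps>0$ to the whole set, and Theorem~\ref{thm:exactrecoverydet} to conclude. The one step you single out as the main hurdle --- uniformizing $\max_{I}\normm{A^{I}(z-W_j)}$ over all $z$ --- is handled in the paper precisely by the device you suspect is insufficient, namely applying the fixed-vector upper bound of Lemma~\ref{lem:sigAI} to $z-W_j$ with $\delta=\nu/2$ and union-bounding only over net points; so your outline matches the paper's argument, and your reservation identifies a spot where the paper itself is terse rather than a defect specific to your approach.
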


\begin{proof}
The proof relies on combining Theorem~\ref{thm:exactrecoverydet} and Lemma~\ref{lem:sigAI} together with a covering argument. Throughout the proof, denote $\Omega = \calD_R(\avx) \cap \bbS^{n-1}$. In view of Theorem~\ref{thm:exactrecoverydet}, we need to prove that there exists $c \in ]0,1[$ such that
\[
\min_{I \subset \bbrac{m}, |I| \geq m/2} \normm{A^{I} z} \geq c
\]
for all $z \in \Omega$. Let $\Omega_\eps = \enscond{W_i}{i \in \bbrac{N\Ppa{\Omega,\eps}}}$ be an $\eps$-covering of $\Omega$. For a fixed $W_i \in \Omega_\eps$, Lemma~\ref{lem:sigAI} tells us that
\[
\normm{A^{I} W_i} \geq \nu/2
\]
with probability at least $1-2e^{-\frac{\nu^2m}{8}}$. Now, for an arbitrary but fixed $z \in \Omega$, there exists $W_j \in \Omega_\eps$ such that $\normm{z-W_j} \leq \eps$. Thus
\[
\min_{I \subset \bbrac{m}, |I| \geq m/2} \normm{A^{I} z} \geq \min_{I \subset \bbrac{m}, |I| \geq m/2} \normm{A^{I} W_j} - \max_{I \subset \bbrac{m}, |I| \geq m/2} \normm{A^{I} (z-W_j)} \geq \frac{\nu}{2} - \Ppa{1+\frac{\nu}{2}}\eps
\]
with probability at least $1-3e^{-\frac{\nu^2m}{8}}$, where we took  $\delta=\nu/2$ in Lemma~\ref{lem:sigAI} for the second inequality. Taking $\eps$ small as devised, we deduce that
\begin{equation}\label{eq:sminconcent}
\min_{I \subset \bbrac{m}, |I| \geq m/2} \normm{A^{I} z} \geq \frac{\nu}{2} - \Ppa{1+\frac{\nu}{2}}\eps \in ]0,\nu/2[
\end{equation}
holds for all $z \in \Omega$ with probability at least $1-3e^{\log\Ppa{N\Ppa{\Omega,\eps}}-\frac{\nu^2m}{8}}$.
The bound on the number of measurements then leads to the claim.
\end{proof}

Estimating covering numbers is difficult for general convex cones. On the other hand, the authors in \cite{chandrasekaran_convex_2012,amelunxen_living_2014,tropp2015convex} developed a general recipe for estimating Gaussian widths of the descent cone (restricted to the unit sphere). This is what we will do in Section~\ref{sec:recoverynoiselessgaussiandecomp}. But before, we need a sample complexity bound in terms of the Gaussian width. This is the motivation behind the following corollary.

\begin{corollary}\label{cor:exactrecoverygwidth}
Suppose that \ref{H:Reven} holds. Let $\nu$ be as defined in Lemma~\ref{lem:sigAI}. Let $A: \bbR^n \to \bbR^m$ be a Gaussian map with \iid $\calN(0,1/m)$ entries such that
\[
m \geq \frac{64(1+t)(\nu+2)^2}{\nu^4} w\Ppa{\calD_R(\avx) \cap \bbS^{n-1}}^2
\]
for $t > 0$. Then with probability at least $1-3e^{-\frac{t\nu^2m}{8}}$, the recovery of $\avx$ (up to a global sign) is exact by solving \eqref{eq:eqminregulPRexact}.
\end{corollary}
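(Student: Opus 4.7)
The strategy is to apply Theorem~\ref{thm:exactrecoverygaussian} and then use Sudakov's minoration (Proposition~\ref{prop:Sudakov}) to trade the covering-number condition for a Gaussian-width condition. Write $\Omega = \calD_R(\avx) \cap \bbS^{n-1}$, which is bounded, so Proposition~\ref{prop:Sudakov} gives, for every admissible $\eps > 0$,
\[
\log N(\Omega, \eps) \leq \frac{4 w(\Omega)^2}{\eps^2}.
\]
Substituting this into the hypothesis of Theorem~\ref{thm:exactrecoverygaussian}, the sufficient condition
\[
m \geq \frac{8(1+t)}{\nu^2}\log N(\Omega, \eps)
\]
is implied by the stronger but now purely geometric
\[
m \geq \frac{32(1+t) w(\Omega)^2}{\nu^2 \eps^2}.
\]

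Next I would choose $\eps$ strictly inside the admissible window $]0, \nu/(2+\nu)[$. Taking $\eps = \nu/\bigl(\sqrt{2}(\nu+2)\bigr)$ is valid since this is strictly smaller than $\nu/(\nu+2)$, and it yields $1/\eps^2 = 2(\nu+2)^2/\nu^2$. Plugging in,
\[
\frac{32(1+t) w(\Omega)^2}{\nu^2 \eps^2} = \frac{64(1+t)(\nu+2)^2}{\nu^4} w(\Omega)^2,
\]
which is exactly the sample-complexity bound assumed in the corollary. The probability guarantee $1-3e^{-t\nu^2 m/8}$ and the conclusion on exact recovery are inherited verbatim from Theorem~\ref{thm:exactrecoverygaussian}, since nothing in the reduction introduces any additional randomness.

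There is no real obstacle here: the argument is a one-line application of Sudakov followed by an optimization-free choice of $\eps$. The only subtle point worth flagging is the ``$\eps$ small enough'' qualifier in Proposition~\ref{prop:Sudakov}, which is harmless in our setting: the chosen $\eps$ is already very small (recall $\nu = \tfrac{1}{18}\sqrt{\pi/2}$), and in the degenerate regime $N(\Omega,\eps) = 1$ the covering-number hypothesis of Theorem~\ref{thm:exactrecoverygaussian} is vacuous, so the conclusion holds trivially without invoking Sudakov at all.
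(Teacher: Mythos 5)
Your proof is correct and follows exactly the paper's own route: apply Sudakov's minoration (Proposition~\ref{prop:Sudakov}) to bound $\log N\Ppa{\calD_R(\avx)\cap\bbS^{n-1},\eps}$ by $4w\Ppa{\calD_R(\avx)\cap\bbS^{n-1}}^2/\eps^2$ and plug $\eps=\nu/\bigl(\sqrt{2}(2+\nu)\bigr)$ into Theorem~\ref{thm:exactrecoverygaussian}. The arithmetic checks out and your remark on the degenerate case $N=1$ is a harmless extra precaution.
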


\begin{proof}
Use the lower bound of Proposition~\ref{prop:Sudakov} and choose $\eps=\frac{\nu}{\sqrt{2}(2+\nu)}$ in  Theorem~\ref{thm:exactrecoverygaussian}.
\end{proof}
\begin{remark}{\ }
\begin{enumerate}[label=(\roman*)]
    \item Clearly, this result shows that the sample complexity bound for exact phase recovery by solving \eqref{eq:eqminregulPRexact} is nearly (up to constants) the same as for exact recovery from linear Gaussian measurements (\ie compressed sensing) \cite{chandrasekaran_convex_2012,amelunxen_living_2014}. However, one has to keep in mind that \eqref{eq:eqminregulPRexact} contains a non-convex constraint, which is not algorithmically tractable, and the recovery results we have are not for an algorithmic scheme.
    \item Unlike the linear case, our results for quadratic measurements above cannot be extended to sub-Gaussian random sensing vectors. The most technical reason is that the lower bound in Lemma~\ref{lem:sigAI}, which uses \cite[Lemma~4.4]{Voroninski14}, heavily relies on the Gaussian structure of the sensing vectors. As a simple counter-example (see also \cite[Remark~2.3]{Voroninski14}), consider the recovery of any vector of the standard basis from Rademacher measurements, \ie the entries of $A$ are \iid taking values $\pm 1$ with probability $1/2$. Then all such vectors have the same image under $|A \cdot|$. Thus regardless of the number of measurements, even such simple one-sparse vectors cannot be reconstructed from Rademacher sensing vectors.   
\end{enumerate}  
\end{remark}



\subsection{Recovery bounds for decomposable regularizers}\label{sec:recoverynoiselessgaussiandecomp}
We start by defining some essential geometrical objects associated to the non-smoothness structure of the regularizer $R$ at a given vector $x$, as introduced in \cite{vaiterimaiai13}. These objects capture the model structure undelying $x$. 

\begin{definition}[Model Subspace]\label{defn:linmod}
  Let $x \in \bbR^n$. We define
  \begin{equation*}
    \e{x} \eqdef \proj_{\Aff(\partial R(x))}(0).
  \end{equation*}
  We also define 
  \begin{equation*}
    \S_{x} \eqdef \Lin (\partial R(x)) \qandq \T_{x} \eqdef \S_{x}^\bot .
  \end{equation*}
  $\T_{x}$ is coined the \emph{model subspace} of $x$ associated to $R$.
\end{definition}
It can be shown, see \cite[Proposition~5]{vaiterimaiai13}, that $x \in \T_{x}$, hence the name model subspace. When $R$ is differentiable at $x$, we have $\e{x}=\nabla R(x)$ and $\T_{x}=\bbR^n$. When $R$ is the $\ell_1$-norm (Lasso regularizer), the vector $\e{x}$ is nothing but the sign of $x$. Thus, $\e{x}$ can be viewed as a generalization of the sign vector. Observe also that $\e{x}=\proj_{\T_{x}}(\partial R(x))$, and thus $\e{x} \in \T_{x} \cap \Aff(\partial R(x))$. However, in general, $\e{x} \not\in \partial R(x)$.

\medskip

In this subsection, we will assume that $R$ is a strong gauge in the sense of \cite[Definition~6]{vaiterimaiai13}. 
\begin{definition}[Strong Gauge]\label{def:Rstrong} 
$R$ is a strong gauge if $R=\gamma_{\calC}$
where $\calC$ is a non-empty convex compact set containing the origin as an interior point, and $\e{x} \in \ri(\partial R(x))$. 
\end{definition}
Strong gauges have a nice decomposable description of $\partial R(x)$ in terms of $\e{x}$, $\T_{x}$, $\S_{x}$ and $\sigma_{\calC}$. More precisely, piecing together \cite[Theorem~1, Proposition~4 and Proposition~5(iii)]{vaiterimaiai13}, we have
\begin{equation}\label{eq:decompstrong}
\partial R(x) = \Aff(\partial R(x)) \cap \calC^\circ =
\enscond{v \in \bbR^n}
{
  v_{\T_{x}} = \e{x}
  \qandq
  \sigma_{\calC}(v_{\S_{x}}) \leq 1
}.
\end{equation}
The Lasso, group Lasso, and nuclear norms are typical popular examples of (symmetric) strong gauges. Let us observe that strong symmetric gauges not only conform to \ref{H:Reven} but also meet the requirements of Proposition~\ref{prop:existencenoiseless}.

\medskip

The following result provides a useful upper-bound on the Gaussian width of the descent cone of a strong gauge in terms of $\e{x}$, $\T_{x}$, $\S_{x}$ and $\sigma_{\calC}$.
\begin{lemma}\label{lem:gwidthbndgauge}
If $R$ is a strong gauge of $\calC$, then for any $x \in \bbR^n \setminus \ens{0}$
\begin{equation}\label{eq:gwidthbndgauge}
w\Ppa{\calD_R(x) \cap \bbS^{n-1}}^2 \leq \esp{\sigma_{\calC}(Z_{\S_x})^2}\normm{\e{x}}^2 + \dim\Ppa{\T_{x}} , \qquad Z \sim \calN(0,\Id_n) . 
\end{equation}
\end{lemma}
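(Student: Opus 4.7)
The plan is to follow the classical recipe of Chandrasekaran--Recht--Parrilo--Willsky \cite{chandrasekaran_convex_2012} which upper bounds the squared Gaussian width of a descent cone by the squared expected distance of a standard Gaussian vector to the polar cone, and then to exploit the decomposable structure of $\partial R(x)$ for a strong gauge given in \eqref{eq:decompstrong} to evaluate this distance in closed form.

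First, I would observe that since $x \neq 0$ and $R$ is a strong gauge, $0 \notin \partial R(x)$ and $\calN_R(x) = \cl{\cone}(\partial R(x))$. Combined with the invariance of the Gaussian width under closure mentioned right after Definition~\ref{def:gwidth}, this yields the standard inequality
\[
w\Ppa{\calD_R(x) \cap \bbS^{n-1}}^2
\;=\; w\Ppa{\calT_R(x) \cap \bbS^{n-1}}^2
\;\leq\; \esp{\dist\bpa{Z,\cone(\partial R(x))}^2} ,
\]
whose justification is the polarity duality $\calT_R(x)^{\circ} = \calN_R(x)$ together with the standard identity $\sigma_{\calK \cap \bbS^{n-1}}(Z) \leq \dist(Z,\calK^\circ)$ valid for any non-empty closed convex cone $\calK$.

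Next, to estimate the right-hand side, I would construct a convenient element of $\cone(\partial R(x))$ rather than take an infimum over all of it. Setting $t^\star \eqdef \sigma_{\calC}(Z_{\S_x})$ and using the explicit description \eqref{eq:decompstrong}, the vector
\[
v^\star \;\eqdef\; t^\star \e{x} \,+\, Z_{\S_x}
\]
belongs to $t^\star \partial R(x) \subset \cone(\partial R(x))$ because $(v^\star)_{\T_x} = t^\star \e{x}$ and $\sigma_\calC\bpa{(v^\star)_{\S_x}} = \sigma_\calC(Z_{\S_x}) = t^\star$. Decomposing $Z = Z_{\T_x} + Z_{\S_x}$ orthogonally and using Pythagoras, this gives
\[
\dist\bpa{Z,\cone(\partial R(x))}^2
\;\leq\; \normm{Z - v^\star}^2
\;=\; \normm{Z_{\T_x} - t^\star \e{x}}^2 .
\]

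Finally, I would expand the square and take expectation. Since $Z \sim \calN(0,\Id_n)$ and $\T_x,\S_x$ are orthogonal, the Gaussian random vectors $Z_{\T_x}$ and $Z_{\S_x}$ are independent; in particular $t^\star = \sigma_\calC(Z_{\S_x})$ is independent of $Z_{\T_x}$ and $\esp{Z_{\T_x}}=0$, so the cross term vanishes:
\[
\esp{\normm{Z_{\T_x} - t^\star \e{x}}^2}
= \esp{\normm{Z_{\T_x}}^2} \,-\, 2\esp{t^\star}\pscal{\esp{Z_{\T_x}},\e{x}} \,+\, \esp{(t^\star)^2}\normm{\e{x}}^2
= \dim(\T_x) + \esp{\sigma_\calC(Z_{\S_x})^2}\normm{\e{x}}^2 ,
\]
where we used that $\esp{\normm{Z_{\T_x}}^2} = \dim(\T_x)$. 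Chaining the three displays yields \eqref{eq:gwidthbndgauge}.

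The only subtle step is the first inequality, where one has to be careful that the descent cone need not be closed and that $\e{x} \in \ri(\partial R(x))$ (strong gauge assumption) ensures $\calN_R(x)=\cl{\cone}(\partial R(x))$; everything else is a direct computation driven by the decomposition \eqref{eq:decompstrong} and independence of the two orthogonal Gaussian pieces.
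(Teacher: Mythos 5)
Your proposal is correct and follows essentially the same route as the paper: the Chandrasekaran et al.\ bound $w(\calD_R(x)\cap\bbS^{n-1})^2 \leq \esp{\dist(Z,\calN_R(x))^2}$, the identification of the normal cone with the cone generated by $\partial R(x)$ (the paper invokes \cite[Theorem~23.7]{rockafellar_convex_1970} and the fact that $\Argmin(R)=\{0\}$), and then exactly the same witness $v = \sigma_{\calC}(Z_{\S_x})\e{x} + Z_{\S_x}$ drawn from the decomposability description \eqref{eq:decompstrong}, with the expectation evaluated via independence of $Z_{\T_x}$ and $Z_{\S_x}$. The only cosmetic difference is that what really licenses the normal-cone identity is that $x\neq 0$ is not a minimizer of the coercive gauge $R$ (rather than $\e{x}\in\ri(\partial R(x))$ per se), but this does not affect the argument.
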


Clearly this upper-bound scales linearly in the intrinsic dimension of $x$ via the dimension of the model space $T_x$. Thus, one expects Corollary~\ref{cor:exactrecoverygwidth} to provide us with a complexity bound that scales linearly in $\dim(T_{\avx})$ to recover $\pm\avx$ by solving \eqref{eq:eqminregulPRexact}. This is what we will show shortly for a few popular regularizers.

\begin{proof}
From \cite[Proposition~3.6]{chandrasekaran_convex_2012} and Jensen's inequality, we have
\[
w\Ppa{\calD_R(x) \cap \bbS^{n-1}}^2 \leq \esp{\dist\Ppa{Z,\calD_R(x)^\circ}^2} = \esp{\dist\Ppa{Z,\calN_R(x)}^2} .
\]
$R$ being a strong gauge implies that $R$ is convex and has full domain, and thus $\partial R$ is non-empty convex- and compact-valued at any $x \in \bbR^n$. Moreover, $\Argmin(R) = \ens{0}$. It then follows from \cite[Theorem~23.7]{rockafellar_convex_1970} that for any $x \neq 0$
\[
\calN_R(x)= \bigcup_{t \geq 0}t\partial R(x) ,
\]
where $t\partial R(x)$ is the dilation of the subdifferential through the scaling factor $t$.
In turn, we get
\[
w\Ppa{\calD_R(x) \cap \bbS^{n-1}}^2 \leq \esp{\dist\Ppa{Z,\cup_{t \geq 0}t\partial R(x)}^2} \leq \inf_{t \geq 0} \esp{\dist\Ppa{Z,t\partial R(x)}^2} \leq \esp{\dist\Ppa{Z,\tilde{t}\partial R(x)}^2}
\]
for any $\tilde{t} \geq 0$. Observe that in view of definition~\eqref{eq:decompstrong}, we have
\begin{equation}\label{eq:t-dilation}
    t\partial R(x) = 
\enscond{v \in \bbR^n}
{
  v_{\T_{x}} = t\e{x}
  \qandq
  \sigma_{\calC}(v_{\S_{x}}) \leq t
}.
\end{equation}
We will now device an appropriate choice of $\tilde{t}$ and of a subgradient in $\partial R(x)$\footnote{This generalizes the reasoning of \cite{rao2012signal} beyond group sparsity.}. Let $v$ be a (random) vector such that $v_{\S_x} = Z_{\S_x}$ and $v_{\T_x} = \sigma_{\calC}(Z_{\S_x})\e{x}$. Obviously, $v \in \sigma_{\calC}(Z_{\S_x})\partial R(x)$ by \eqref{eq:t-dilation}. Thus
\begin{align*}
w\Ppa{\calD_R(x) \cap \bbS^{n-1}}^2
&\leq \esp{\normm{Z-v}^2} \\
&= \esp{\normm{(Z_{\T_x}-v_{\T_x}) + (Z_{\S_x}-v_{\S_x})}^2} \\
&= \esp{\normm{Z_{\T_x}-\sigma_{\calC}(Z_{\S_x})\e{x}}^2} \\
&= \esp{\sigma_{\calC}(Z_{\S_x})^2}\normm{\e{x}}^2 + \esp{\normm{Z_{\T_x}}^2} \\
&= \esp{\sigma_{\calC}(Z_{\S_x})^2}\normm{\e{x}}^2 + \dim(\T_{x}) ,
\end{align*}
where we used orthogonality of $\T_{x}$ and $\S_{x}$ in the first equality and  $\esp{\normm{Z_{\T_x}}^2} = \tr(\proj_{\T_x}) = \dim(\T_{x})$ in the last equality. In the third equality, we used again orthogonality of $\T_{x}$ and $\S_{x}$ which entails that $Z_{\T_x}$ and $Z_{\S_x}$ are independent as $Z$ is zero-mean Gaussian.
\end{proof}

\paragraph{Lasso ($\ell_1$-norm)}
It is widely known that the $\ell_1$-norm promotes sparsity, see~\cite{BuhlmannVandeGeerBook11} for a comprehensive treatment. In our case, this corresponds to choosing
\begin{equation}
\label{eq:Rlasso}
R(x) = \norm{x}{1} = \sum_{i=1}^n \abs{x[i]} .
\end{equation}
This regularizer is also referred to as $\ell_1$-synthesis in the signal processing community.

\medskip

We denote $(a_i)_{1 \leq i \leq n}$ the standard basis of $\bbR^n$ and $\supp(x) \eqdef \enscond{i \in \bbrac{n}}{x[i] \neq 0}$. Then, we have (see \cite{vaiterimaiai13})
\begin{equation}\label{eq:exlasso}
\T_x = \Span \ens{(a_i)_{i \in \supp(x)}}, \quad 
\e{x}[i] = 
\begin{cases} 
\sign(x[i]) 	& \text{if~} i \in \supp(x) \\
0				& \text{otherwise}
\end{cases},
\qandq \sigma_{\calC}=\norm{\cdot}{\infty} .
\end{equation}
Thus if $\avx$ is $s$-sparse, \ie $\abs{\supp(\avx)}=s$, then $\dim(\T_{\avx}) = s$ and $\normm{\e{\avx}}^2 = s$. Moreover
\[
\esp{\sigma_{\calC}(Z_{\S_x})^2} = \esp{\max_{i \in \supp(\avx)^c} |Z[i]|^2} ,
\]
which is the expectation of the maximum of $(n-s)$ $\chi^2$-random variables with $1$ degree of freedom. We then have, using \cite[Example~2.7]{BoucheronLugosi} (see also \cite[Lemma~3.2]{rao2012signal}), that
\[
\esp{\max_{i \in \supp(\avx)^c} |Z[i]|^2} \leq \Ppa{\sqrt{2\log(n-s)} + 1}^2 .
\]
Plugging this into Lemma~\ref{lem:gwidthbndgauge} and using Corollary~\ref{cor:exactrecoverygwidth}, we obtain the following result.
\begin{proposition}\label{prop:exactrecoverygl1}
Let $\avx$ be an $s$-sparse vector. Let $A: \bbR^n \to \bbR^m$ be a Gaussian map with \iid $\calN(0,1/m)$ entries such that 
\[
m \geq \frac{64(1+t)(\nu+2)^2}{\nu^4} s\Ppa{\Ppa{\sqrt{2\log(n-s)} + 1}^2+1} 
\]
for some $t > 0$. Then with probability at least $1-3e^{-\frac{t\nu^2m}{8}}$, the recovery of $\avx$ (up to a global sign) is exact by solving \eqref{eq:eqminregulPRexact} with $R=\norm{\cdot}{1}$.
\end{proposition}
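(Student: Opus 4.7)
The plan is to apply Corollary~\ref{cor:exactrecoverygwidth} and Lemma~\ref{lem:gwidthbndgauge} directly with the choice $R=\norm{\cdot}{1}$. The bulk of the work amounts to (i) checking that the $\ell_1$-norm fits the strong gauge framework of Definition~\ref{def:Rstrong} and thus satisfies~\ref{H:Reven}, and (ii) controlling the two terms appearing on the right-hand side of \eqref{eq:gwidthbndgauge} for the specific model structure of an $s$-sparse vector.

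First, I would observe that $\norm{\cdot}{1}$ is the gauge of $\calC = \ball_\infty(0,1)$, which is a non-empty convex symmetric compact set containing the origin in its interior, and that the subdifferential $\partial \norm{\cdot}{1}(\avx)$ is exactly the product of $\ens{\sign(\avx[i])}$ on $\supp(\avx)$ with $[-1,1]^{n-s}$ on $\supp(\avx)^c$, whose relative interior clearly contains $\e{\avx}$ as given in \eqref{eq:exlasso}; this confirms that $\norm{\cdot}{1}$ is a symmetric strong gauge, so \ref{H:Reven} holds and both Lemma~\ref{lem:gwidthbndgauge} and Corollary~\ref{cor:exactrecoverygwidth} apply. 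In this case, the identifications \eqref{eq:exlasso} give immediately $\dim(\T_{\avx}) = |\supp(\avx)| = s$, $\normm{\e{\avx}}^2 = s$, and $\sigma_\calC = \norm{\cdot}{\infty}$ with $\S_{\avx}$ the coordinate subspace indexed by $\supp(\avx)^c$.

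Next, I would evaluate the expectation $\esp{\sigma_\calC(Z_{\S_{\avx}})^2}$. Since $\sigma_\calC = \norm{\cdot}{\infty}$ and $Z_{\S_{\avx}}$ consists of the $n-s$ coordinates of $Z$ indexed by $\supp(\avx)^c$, this is precisely the expected squared maximum of $n-s$ independent $\calN(0,1)$ variables in absolute value, \ie a $\chi^2_1$ maximum. Applying the standard maximal inequality for sub-Gaussian variables (cited in the excerpt from \cite[Example~2.7]{BoucheronLugosi}), this is upper bounded by $(\sqrt{2\log(n-s)}+1)^2$. Substituting these three quantities into Lemma~\ref{lem:gwidthbndgauge} yields
\[
w\Ppa{\calD_R(\avx) \cap \bbS^{n-1}}^2 \leq s\Ppa{\Ppa{\sqrt{2\log(n-s)}+1}^2 + 1}.
\]

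Finally, I would plug this Gaussian width bound into the sample complexity condition of Corollary~\ref{cor:exactrecoverygwidth}, yielding exactly the stated threshold and the claimed probability $1-3e^{-t\nu^2 m/8}$. There is no genuine obstacle here: the statement is essentially a specialization of the general framework, and the only nontrivial analytical ingredient is the maximal inequality for Gaussian coordinates, which is standard. The slight subtlety to be careful about is matching the decomposition \eqref{eq:decompstrong} to the classical sign/support description of $\partial \norm{\cdot}{1}(\avx)$ so that the orthogonality between $\T_{\avx}$ and $\S_{\avx}$ (which drives both the separation of $\normm{\e{\avx}}^2$ and $\dim(\T_{\avx})$ in the bound, and the independence of $Z_{\T_{\avx}}$ and $Z_{\S_{\avx}}$) is genuinely used.
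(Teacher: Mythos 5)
Your argument is correct and follows the paper's own derivation essentially verbatim: the paper likewise instantiates Lemma~\ref{lem:gwidthbndgauge} with the identifications \eqref{eq:exlasso} (giving $\dim(\T_{\avx})=\normm{\e{\avx}}^2=s$), bounds $\esp{\max_{i\in\supp(\avx)^c}|Z[i]|^2}$ by $\Ppa{\sqrt{2\log(n-s)}+1}^2$ via \cite[Example~2.7]{BoucheronLugosi}, and feeds the resulting Gaussian width bound into Corollary~\ref{cor:exactrecoverygwidth}. One minor slip worth fixing: the $\ell_1$-norm is the gauge of the $\ell_1$ unit ball, not of $\ball_\infty(0,1)$ (it is the \emph{support function} of the latter); your subsequent use of $\sigma_\calC=\norm{\cdot}{\infty}$ is nonetheless the correct one, so the computation is unaffected.
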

\begin{remark}
Clearly, $m \gtrsim s\log(n-s) + s$ measurements are sufficient for the exact recovery of an $s$-sparse vector from $m$ phaseless (actually signless) measurements of a real Gaussian map $A$. This can be improved to $m \gtrsim s\log(n/s) + s$ by exploiting the particular form of the normal cone of the $\ell_1$ norm, see \cite[Proposition~3.10]{chandrasekaran_convex_2012}. This leads to a measurement bound similar to the one in \cite{Voroninski14}. Note however that their recovery guarantee is RIP-based, and thus is uniform over all $s$-sparse vectors while our recovery analysis is non-uniform.
\end{remark}

\paragraph{Group Lasso ($\ell_1-\ell_2$ norm)}\label{par:l12norm}
The $\ell_1-\ell_2$ norm (a.k.a. group Lasso) is widely advocated to promote group/block sparsity, \ie it drives all the coefficients in one group to zero together hence leading to group selection, see~\cite{BuhlmannVandeGeerBook11}. The group Lasso regularization with $L$ groups reads
\begin{equation}
\label{eq:Rglasso}
R(x) = \norm{x}{1,2} \eqdef \sum_{i=1}^L \norm{x[b_i]}{2} .
\end{equation}
where $\bigcup\limits_{i=1}^L b_i = \bbrac{n}$, $b_i, b_j \subset \bbrac{n}$, and  $b_i \cap b_j = \emptyset$ whenever $i \neq j$. Define the group support as $\bsupp(x) \eqdef \enscond{i \in \bbrac{L}}{x[b_i] \neq 0}$.
Thus, one has
\begin{equation}\label{eq:exglasso}
\T_x = \Span \ens{(a_j)_{\enscond{j}{\exists i \in \bsupp(x), j \in b_i}}}, 
\quad \e{x}[b_i] = 
\begin{cases} 
\tfrac{x[b_i]}{\norm{x[b_i]}{2}} & \text{if~} i \in \bsupp(x) \\
0 				& \text{otherwise}
\end{cases}, 
\end{equation}
and 
\begin{equation}\label{eq:exglasso-g}
\sigma_{\calC}(v) = \max_{i \in \bsupp(\avx)^c} \norm{v[b_i]}{2} .
\end{equation}
Thus if $\avx$ is $s$-block sparse, \ie $\abs{\bsupp(\avx)}=s$, and the groups have equal size $B$, we have $\dim(\T_{\avx}) = sB$ and $\normm{\e{\avx}}^2 = s$. Moreover, \cite[Example~2.7]{BoucheronLugosi} yields
\[
\esp{\max_{i \in \bsupp(\avx)^c} \normm{Z[b_i]}^2} \leq \Ppa{\sqrt{2\log(L-s)} + \sqrt{B}}^2 .
\]
Hence, we get the following result for the group Lasso. 
\begin{proposition}\label{prop:exactrecoverygl1l2}
Let $\avx$ be an $s$-block sparse vector. Let $A: \bbR^n \to \bbR^m$ be a Gaussian map with \iid $\calN(0,1/m)$ entries such that 
\[
m \geq \frac{64(1+t)(\nu+2)^2}{\nu^4} s\Ppa{\Ppa{\sqrt{2\log(L-s)} + \sqrt{B}}^2+B} 
\]
for some $t > 0$. Then with probability at least $1-3e^{-\frac{t\nu^2m}{8}}$, the recovery of $\avx$ (up to a global sign) is exact by solving \eqref{eq:eqminregulPRexact} with $R=\norm{\cdot}{1,2}$.
\end{proposition}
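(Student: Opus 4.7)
The plan is to mirror the argument used for Proposition~\ref{prop:exactrecoverygl1} (the Lasso case): combine the sample complexity bound of Corollary~\ref{cor:exactrecoverygwidth}, the Gaussian-width bound of Lemma~\ref{lem:gwidthbndgauge}, and the explicit descriptions of the model-subspace data for the $\ell_1$--$\ell_2$ norm already collected in \eqref{eq:exglasso}--\eqref{eq:exglasso-g}.

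First I would verify that $\norm{\cdot}{1,2}$ fits the setting. It is an even (symmetric) norm with full domain, hence it satisfies \ref{H:Reven} and the coercivity requirements behind Proposition~\ref{prop:existencenoiseless}. Moreover it is a strong gauge (the paper explicitly lists the group Lasso as one of the canonical examples just after Definition~\ref{def:Rstrong}), so Lemma~\ref{lem:gwidthbndgauge} is applicable and gives
\[
w\Ppa{\calD_R(\avx) \cap \bbS^{n-1}}^2 \leq \esp{\sigma_{\calC}(Z_{\S_{\avx}})^2}\normm{\e{\avx}}^2 + \dim(\T_{\avx}) .
\]

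Next I would read off the three quantities on the right-hand side from \eqref{eq:exglasso}--\eqref{eq:exglasso-g}. Since $\T_{\avx}$ is the span of the standard basis vectors indexed by the coordinates in the active blocks $\bigcup_{i \in \bsupp(\avx)} b_i$, its orthogonal complement $\S_{\avx}$ is spanned by the standard basis vectors indexed by the inactive blocks. The $s$-block-sparsity assumption with groups of equal size $B$ immediately yields $\dim(\T_{\avx}) = sB$, while $\normm{\e{\avx}}^2 = s$ since $\e{\avx}$ has a unit-norm contribution on each of the $s$ active blocks. The projection $Z_{\S_{\avx}}$ has the same distribution as the restriction of $Z$ to the inactive blocks, and by \eqref{eq:exglasso-g} one has $\sigma_{\calC}(Z_{\S_{\avx}}) = \max_{i \in \bsupp(\avx)^c} \normm{Z[b_i]}$.

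Then I would bound the expected maximum exactly as in the Lasso case: each $\normm{Z[b_i]}$ for $i \in \bsupp(\avx)^c$ is the Euclidean norm of an independent standard Gaussian in $\bbR^B$, with mean at most $\sqrt{B}$ and $1$-Lipschitz in $Z[b_i]$, hence sub-Gaussian with variance proxy $1$. Invoking \cite[Example~2.7]{BoucheronLugosi} (the same Massart-type maximal inequality as in Proposition~\ref{prop:exactrecoverygl1}) gives the stated bound $\esp{\max_{i \in \bsupp(\avx)^c} \normm{Z[b_i]}^2} \leq \Ppa{\sqrt{2\log(L-s)} + \sqrt{B}}^2$. Assembling everything,
\[
w\Ppa{\calD_R(\avx) \cap \bbS^{n-1}}^2 \leq s\Ppa{\Ppa{\sqrt{2\log(L-s)} + \sqrt{B}}^2 + B} ,
\]
and Corollary~\ref{cor:exactrecoverygwidth} closes the argument: the hypothesized lower bound on $m$ is precisely $\tfrac{64(1+t)(\nu+2)^2}{\nu^4}$ times this upper bound, so exact recovery (up to a global sign) holds with probability at least $1-3e^{-t\nu^2 m/8}$. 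The proof is a routine adaptation of Proposition~\ref{prop:exactrecoverygl1}; no new geometric idea is required. The only mildly delicate point is correctly identifying the projected Gaussian $Z_{\S_{\avx}}$ with the block-restricted vector so that $\sigma_{\calC}(Z_{\S_{\avx}})$ reduces to a maximum over independent block norms, after which the sub-Gaussian maximal inequality applies directly.
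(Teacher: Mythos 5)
Your proof is correct and follows essentially the same route as the paper: it applies Lemma~\ref{lem:gwidthbndgauge} with the model-subspace data from \eqref{eq:exglasso}--\eqref{eq:exglasso-g} (giving $\dim(\T_{\avx})=sB$, $\normm{\e{\avx}}^2=s$), bounds $\esp{\max_{i \in \bsupp(\avx)^c}\normm{Z[b_i]}^2}$ by $\Ppa{\sqrt{2\log(L-s)}+\sqrt{B}}^2$ via the same maximal inequality, and concludes with Corollary~\ref{cor:exactrecoverygwidth}. There is nothing to correct.
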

\begin{remark} 
Our complexity bound for the group Lasso is of order $m\gtrsim s\Ppa{2\log(L-s)+ B}$. This bound is, up to a multiplicative constant, similar to the (linear) compressed sensing case with Gaussian sensing vectors, see \cite{candes2011simple,rao_universal_2012}.
\end{remark}

\begin{remark} 
The authors in \cite{jagatap_fast_2017} proposed an algorithm which achieves exact reconstruction up to global sign from $O(\frac{s^2}{B}\log(n))$. This is worse than our scaling but we stress the fact that our guarantee is on the minimizers of the optimization problem \eqref{eq:eqminregulPRexact}, while theirs is on an actual iterative reconstruction algorithm. As we discussed in the introduction, whether a tractable algorithm exists with provable exact phase retrieval guarantees from Gaussian sensing vectors under the same complexity bounds as ours is still an open question that we leave to a future work. 
\end{remark}

\subsection{Recovery bounds for frame analysis-type regularizers}\label{sec:recoverynoiselessgaussianana}
Analysis-type priors build upon the assumption that the signal of interest $\avx$ is of low complexity (\eg sparse or block sparse) after being transformed by a so-called analysis operator. Given $D \in \bbR^{n \times p}$, we consider analysis-type regularizers of the form
\begin{equation}\label{eq:Rana}
R(x) = \gamma_{\calC}(D^\top x) ,
\end{equation}
where $\gamma_{\calC}$ is a strong gauge (see Definition~\ref{def:Rstrong} and the discussion just after). Since $\gamma_\calC$ has a full domain, we have
\begin{equation}\label{eq:decompstrongana}
\partial R(x) = D \partial\gamma_{\calC}(D^\top x) = D\enscond{v \in \bbR^p}
{
  v_{\T_{D^\top x}} = \e{D^\top x}
  \qandq
  \sigma_{\calC}(v_{\S_{D^\top x}}) \leq 1
} ,
\end{equation}
where $\e{D^\top x}$ and $\T_{D^\top x}$ are the model parameters of $\gamma_\calC$ at $D^\top x$.

In this section, we will assume that $D$ is a Parseval tight frame of $\bbR^n$, meaning that $DD^\top=\Id_n$, and thus $D$ is surjective. Many popular (sparsifying) transforms in signal and image processing are Parseval tight frames (e.g. wavelets, curvelets, or concatenation of orthonormal bases; see \cite{starckfadili2015sparse}).

We can now state the following analysis-type prior version of Lemma~\ref{lem:gwidthbndgauge}.
\begin{lemma}\label{lem:gwidthbndgaugeana}
Let $R$ be of the form \eqref{eq:Rana}, where $\gamma_\calC$ is a strong gauge and $D$ is a Parseval tight frame. Let $W = D^\top Z$ where $Z \sim \calN(0,\Id_n)$. Then for any $x \in \bbR^n \setminus \ens{0}$
\begin{equation}\label{eq:gwidthbndgaugeana}
w\Ppa{\calD_R(x) \cap \bbS^{n-1}}^2 \leq \esp{\sigma_{\calC}(W_{\S_{D^\top x}})^2}\normm{\e{D^\top x}}^2 + \dim\Ppa{\T_{D^\top x}} .
\end{equation}
\end{lemma}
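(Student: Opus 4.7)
The plan is to mirror the proof of Lemma~\ref{lem:gwidthbndgauge} while transferring the Gaussian computation from $\bbR^n$ to $\bbR^p$ by exploiting the Parseval identity $DD^\top = \Id_n$. Since this identity forces $D^\top$ to be injective, $D^\top x \neq 0$ whenever $x \neq 0$, so the strong-gauge decomposition \eqref{eq:decompstrong} of $\partial\gamma_\calC$ is applicable at $D^\top x$.

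First, I would invoke the chain rule for the convex subdifferential: since $\gamma_\calC$ is finite-valued, $\partial R(x) = D\,\partial\gamma_\calC(D^\top x)$, and as $\gamma_\calC$ is a strong gauge with $\Argmin(\gamma_\calC)=\{0\}$, \cite[Theorem~23.7]{rockafellar_convex_1970} gives $\calN_R(x) = D\,\calN_{\gamma_\calC}(D^\top x)$. Combining this with \cite[Proposition~3.6]{chandrasekaran_convex_2012} and Jensen's inequality,
\begin{equation*}
w(\calD_R(x) \cap \bbS^{n-1})^2 \leq \esp{\dist(Z, D\,\calN_{\gamma_\calC}(D^\top x))^2} .
\end{equation*}
Because $DD^\top = \Id_n$, one has $Z = DW$ and $\normm{Du}^2 = \pscal{u, D^\top D u} \leq \normm{u}^2$ for every $u \in \bbR^p$ (since $D^\top D$ is an orthogonal projector and therefore $\preceq \Id_p$). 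Hence for every $v \in \calN_{\gamma_\calC}(D^\top x)$, $\normm{Z - Dv}^2 = \normm{D(W-v)}^2 \leq \normm{W-v}^2$, and taking the infimum over $v$ followed by expectation gives
\begin{equation*}
\esp{\dist(Z, \calN_R(x))^2} \leq \esp{\dist(W, \calN_{\gamma_\calC}(D^\top x))^2} .
\end{equation*}

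Next I would pick the test element $v = \sigma_\calC(W_{\S_{D^\top x}})\,\e{D^\top x} + W_{\S_{D^\top x}}$, which by \eqref{eq:decompstrong} belongs to $\sigma_\calC(W_{\S_{D^\top x}})\,\partial\gamma_\calC(D^\top x) \subset \calN_{\gamma_\calC}(D^\top x)$. Orthogonality of $\T_{D^\top x}$ and $\S_{D^\top x}$ yields
\begin{equation*}
\normm{W - v}^2 = \normm{W_{\T_{D^\top x}}}^2 - 2\sigma_\calC(W_{\S_{D^\top x}})\pscal{W_{\T_{D^\top x}}, \e{D^\top x}} + \sigma_\calC(W_{\S_{D^\top x}})^2\normm{\e{D^\top x}}^2 ,
\end{equation*}
so it remains to control the first and middle terms in expectation.

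The main obstacle is the middle (cross) term. In Lemma~\ref{lem:gwidthbndgauge} this term vanished thanks to the independence of $Z_{\T_x}$ and $Z_{\S_x}$, a consequence of isotropy; here, however, $W$ has the rank-deficient covariance $D^\top D$, so $W_{\T_{D^\top x}}$ and $W_{\S_{D^\top x}}$ are generally dependent. The workaround is to invoke the symmetry of the strong gauges under consideration in the paper: $\calC$ is centrally symmetric, so $\sigma_\calC$ is even, and combining this with the equidistribution of $W$ and $-W$ (being a centered Gaussian) forces the cross term to equal its own negation and hence to vanish. For the first term, since $D^\top D$ is an orthogonal projector,
\begin{equation*}
\esp{\normm{W_{\T_{D^\top x}}}^2} = \tr\Ppa{\proj_{\T_{D^\top x}} D^\top D\, \proj_{\T_{D^\top x}}} \leq \tr\Ppa{\proj_{\T_{D^\top x}}} = \dim(\T_{D^\top x}) .
\end{equation*}
Assembling these estimates yields the desired bound.
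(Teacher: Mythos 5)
Your proof is correct and follows the paper's overall strategy (Chandrasekaran et al.'s Proposition~3.6, Rockafellar's Theorem~23.7, the Parseval contraction $\normm{D(W-v)}\leq\normm{W-v}$, and the same test element $v$ with $v_{\S_{D^\top x}}=W_{\S_{D^\top x}}$ and $v_{\T_{D^\top x}}=\sigma_\calC(W_{\S_{D^\top x}})\e{D^\top x}$). The one place where you genuinely diverge is the cross term, and your instinct there is sound: the paper disposes of it by asserting that $W_{\T_{D^\top x}}$ and $W_{\S_{D^\top x}}$ are uncorrelated ``since $\T_{D^\top x}$ and $\S_{D^\top x}$ are orthogonal, hence independent as they are Gaussian,'' but the covariance of $W=D^\top Z$ is $D^\top D$, so the cross-covariance is $\proj_{\T_{D^\top x}}D^\top D\,\proj_{\S_{D^\top x}}$, which need not vanish when $D^\top D\neq \Id_p$; orthogonality of the subspaces alone does not suffice. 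Your replacement argument --- $W\overset{d}{=}-W$ together with evenness of $\sigma_\calC$ forces $\esp{\sigma_\calC(W_{\S_{D^\top x}})\pscal{W_{\T_{D^\top x}},\e{D^\top x}}}=0$ --- is valid and actually repairs this step, at the price of invoking central symmetry of $\calC$, which is not in the lemma's stated hypotheses (it only assumes a strong gauge) but is in force everywhere the lemma is used in the paper via \ref{H:Reven}; you should flag that extra assumption explicitly. Your bound $\esp{\normm{W_{\T_{D^\top x}}}^2}=\tr\bPa{\proj_{\T_{D^\top x}}D^\top D\,\proj_{\T_{D^\top x}}}\leq\tr\bPa{\proj_{\T_{D^\top x}}}$ via $D^\top D\preceq\Id_p$ and monotonicity of the trace is also a touch more elementary than the paper's detour through von Neumann's trace inequality and H\"older, and gives the same constant.
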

The proof bears some similarities with that of Lemma~\ref{lem:gwidthbndgauge}, but handling the presence of $D$ necessitates new arguments.
\begin{proof}
Since $D$ is surjective and $\gamma_\calC$ is a strong gauge, we have $\Argmin(R) = \ens{0}$. We can then argue as in the proof of Lemma~\ref{lem:gwidthbndgauge} using \cite[Proposition~3.6]{chandrasekaran_convex_2012} and \cite[Theorem~23.7]{rockafellar_convex_1970} to get that for any $x \neq 0$
\[
w\Ppa{\calD_R(x) \cap \bbS^{n-1}}^2 \leq \inf_{t \geq 0} \esp{\dist\Ppa{Z,t\partial R(x)}^2} \leq \esp{\dist\Ppa{Z,\tilde{t}D\partial \gamma_{\calC}(D^\top x)}^2}
\]
for any $\tilde{t} \geq 0$, where we have also used \eqref{eq:decompstrongana}.
Let us pick $v \in \bbR^p$ such that $v_{\S_{D^\top x}} = W_{\S_{D^\top x}}$ and $v_{\T_{D^\top x}} = \sigma_{\calC}(W_{\S_{D^\top x}})\e{D^\top x}$. Obviously, $v \in \sigma_{\calC}(W_{\S_{D^\top x}})\partial \gamma_{\calC}(x)$. We then have
\begin{align*}
w\Ppa{\calD_R(x) \cap \bbS^{n-1}}^2
&\leq \esp{\normm{Z-Dv}^2} \\
\text{\scriptsize{($D$ is a Parseval tight frame)}} &= \esp{\normm{DD^\top Z-Dv}^2} \\
\text{\scriptsize{($\normm{D}=1$)}} &\leq \esp{\normm{W-v}^2} \\
&= \esp{\normm{(W_{\T_{D^\top x}}-v_{\T_{D^\top x}}) + (W_{\S_{D^\top x}}-v_{\S_{D^\top x}})}^2} \\
&= \esp{\normm{W_{\T_{D^\top x}}-\sigma_{\calC}(W_{\S_{D^\top x}})\e{{D^\top x}}}^2} \\
&= \esp{\sigma_{\calC}(W_{\S_{D^\top x}})^2}\normm{\e{{D^\top x}}}^2 + \esp{\normm{W_{\T_{D^\top x}}}^2} .
\end{align*}
In the last equality, we used that $W_{\T_{D^\top x}}$ and $W_{\S_{D^\top x}}$ are zero-mean and uncorrelated since $\T_{D^\top x}$ and $\S_{D^\top x}$ are orthogonal, hence independent as they are Gaussian. Let $\varsigma(M)$ be the decreasing sequence of singular values of $M$. We have
\begin{align*}
\esp{\normm{W_{\T_{D^\top x}}}^2}
&= \esp{\tr\Ppa{\proj_{\T_{D^\top x}}D^\top Z Z^\top D\proj_{\T_{D^\top x}}}} \\
&= \tr\Ppa{\proj_{\T_{D^\top x}}D^\top D\proj_{\T_{D^\top x}}} \\
&= \tr\Ppa{\proj_{\T_{D^\top x}}D^\top D} \\
\text{\scriptsize{(von Neumann's trace inequality \cite{vonNeumann1962some})}} &\leq \pscal{\varsigma\bPa{\proj_{\T_{D^\top x}}},\varsigma\pa{D^\top D}} \\
\text{\scriptsize{(H\"older's inequality)}} &\leq \norm{\varsigma\bPa{\proj_{\T_{D^\top x}}}}{1}\normm{D}^2 \\
\text{\scriptsize{($\normm{D}=1$ and standard properties of orthogonal projectors on subspaces)}} &= \dim\Ppa{\T_{D^\top x}} .
\end{align*}
\end{proof}

The remaining step to get a sample complexity bound via Corollary~\ref{cor:exactrecoverygwidth} is to compute the expectation in the upper-bound \eqref{eq:gwidthbndgaugeana}. We directly consider the case where $\gamma_\calC$ is the group Lasso and the Lasso is a special case by taking blocks/groups of size $1$. 

\paragraph{Frame analysis group Lasso}
In this case, $\gamma_\calC = \norm{\cdot}{1,2}$ (see \eqref{eq:Rglasso}), and thus $\T_{D^\top x}$, $\e{D^\top x}$ and $\sigma_{\calC}$ are given by \eqref{eq:exglasso} and \eqref{eq:exglasso-g} replacing $x$ by $D^\top x$.
Thus if $\avx$ is $s$-block sparse in the dictionary $D^\top$, \ie $\abs{\bsupp(D^\top \avx)}=s$, then $\dim(\T_{D^\top \avx}) = sB$ and $\normm{\e{D^\top \avx}}^2 = s$. It remains to compute \linebreak $\esp{\max_{i \in \bsupp(\avx)^c} \normm{W[b_i]}^2}$. Note that some care has to be taken as the entries of $W$ are zero-mean Gaussian but are not independent (except in the obvious case where $D$ is orthonormal). For $t > 0$, we have
\begin{align*}
\esp{\max_{i \in \bsupp(\avx)^c} \normm{W[b_i]}^2}
&= \frac{\log\Ppa{\exp\Ppa{t\esp{\max_{i \in \bsupp(\avx)^c} \normm{W[b_i]}^2}}}}{t} \\
\text{\scriptsize{(Jensen's inequality)}} &\leq \frac{\log\Ppa{\esp{\exp\Ppa{t\max_{i \in \bsupp(\avx)^c} \normm{W[b_i]}^2}}}}{t} \\
\text{\scriptsize{(Monotonicity of the exponential)}} &= \frac{\log\Ppa{\esp{\max_{i \in \bsupp(\avx)^c} \exp\Ppa{t\normm{W[b_i]}^2}}}}{t} \\
\text{\scriptsize{(Bound the max by the sum)}} &\leq \frac{\log\Ppa{\sum_{i \in \bsupp(\avx)^c}\esp{\exp\Ppa{t \normm{W[b_i]}^2}}}}{t} .
\end{align*}
Now, for any block $b$, we have 
\[
\normm{W[b]}^2 = Z^\top D_b D_b^\top Z .
\]
The matrix $D_b D_b^\top$ is symmetric semidefinite positive and $\rank(D_b D_b^\top) \leq \min(B,n) \leq B$ provided that $B \leq n$ (in practice, we even have $B \ll n$). Moreover $\lambda_{\max}(D_b D_b^\top) \leq 1$. $D_b D_b$ can be diagonalized as $D_b D_b^\top = U \Lambda U^\top$, where $U$ is orthogonal and $\Lambda$ 
is a diagonal matrix with the eigenvalues $1 \geq \lambda_1 \geq \cdots \geq \lambda_n \geq 0$ of $D_b D_b^\top$ in its diagonal. Observe that $\lambda_i=0$ for $i \geq B+1$. Thus
\[
\normm{W[b]}^2 = \sum_{i=1}^B \lambda_i Y[i]^2 \leq \sum_{i=1}^B Y[i]^2 \qwhereq Y = U^\top Z .
\]
By the rotational invariance of the standard multivariate normal distribution, the distribution of $Y$ is the same as that of $Z$, that is, $Y \sim \calN(0,\Id_n)$. In turn, $\sum_{i=1}^B Y[i]^2$ is a $\chi$-squared random variable with $B$ degrees of freedom. Therefore, for $t \in ]0,1/2[$, we get
\begin{align*}
\esp{\max_{i \in \bsupp(\avx)^c} \normm{W[b_i]}^2}
&\leq \frac{\log\Ppa{(L-s)\esp{\exp\Ppa{t \sum_{i=1}^B Y[i]^2}}}}{t} \\
&= \frac{\log(L-s) - \frac{B}{2}\log(1-2t)}{t} ,
\end{align*}
where we used the logarithm of the moment-generating of a $\chi$-squared random variable in the last inequality. Minimizing wrt to $t$ we get that
\begin{align*}
\esp{\max_{i \in \bsupp(\avx)^c} \normm{W[b_i]}^2}
&\leq 2\log(L-s) + 2\sqrt{B\log(L-s)} + B \leq \Ppa{\sqrt{2\log(L-s)} + \sqrt{B}}^2 .  
\end{align*}
Inserting the above in Lemma~\ref{lem:gwidthbndgaugeana}, and using Corollary~\ref{cor:exactrecoverygwidth} together with Jensen's inequality, we get the following.
\begin{proposition}\label{prop:exactrecoverygl12ana}
Let $\avx$ be such that $D^\top\avx$ is an $s$-block sparse vector where the size of the blocks $B$ verifies $B \leq n$. Let $A: \bbR^n \to \bbR^m$ be a Gaussian map with \iid $\calN(0,1/m)$ entries such that 
\[
m \geq \frac{64(1+t)(\nu+2)^2}{\nu^4} s\Ppa{\Ppa{\sqrt{2\log(L-s)} + \sqrt{B}}^2+B} , t > 0 .
\]
Then with probability at least $1-3e^{-\frac{t\nu^2m}{8}}$, the recovery of $\avx$ (up to a global sign) is exact by solving \eqref{eq:eqminregulPRexact} with $R=\norm{D^\top \cdot}{1,2}$.
\end{proposition}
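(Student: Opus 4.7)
The plan is to assemble the ingredients prepared in the preamble above. The main workhorse is Lemma~\ref{lem:gwidthbndgaugeana} specialized to $\gamma_\calC = \norm{\cdot}{1,2}$ at the point $D^\top\avx$, after which the sample complexity drops out of Corollary~\ref{cor:exactrecoverygwidth}. Concretely, I would first identify the model parameters via \eqref{eq:exglasso}--\eqref{eq:exglasso-g} applied to $D^\top\avx$: since $D^\top\avx$ is $s$-block sparse with equal block size $B$, one reads off $\dim(\T_{D^\top\avx}) = sB$, $\normm{\e{D^\top\avx}}^2 = s$, and $\sigma_\calC(v)^2 = \max_{i\in\bsupp(D^\top\avx)^c}\normm{v[b_i]}^2$.

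The substantive step is to upper bound $\esp{\max_{i\in\bsupp(D^\top\avx)^c}\normm{W[b_i]}^2}$ with $W = D^\top Z$ and $Z\sim\calN(0,\Id_n)$. The obstacle here is that the components of $W$ are in general \emph{correlated} (since $D$ is a Parseval tight frame, not an orthonormal basis), so one cannot directly invoke the standard maximum-of-independent-Gaussians estimate used in Proposition~\ref{prop:exactrecoverygl1l2}. I would handle this by the MGF chain already spelled out in the paragraph preceding the statement: apply Jensen's inequality to pull the expectation inside $\log\exp$, bound the max by the sum, and then estimate $\esp{\exp(t\normm{W[b]}^2)}$ for a fixed block $b$. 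For the last step, since $DD^\top = \Id_n$ entails $\normm{D}=1$, the symmetric positive semidefinite matrix $D_b D_b^\top$ has all eigenvalues in $[0,1]$ and rank at most $B$; diagonalizing it and invoking the rotational invariance of $Z$ shows that $\normm{W[b]}^2$ is stochastically dominated by a $\chi^2_B$ random variable. Plugging in the known MGF of $\chi^2_B$, valid for $t\in\,]0,1/2[$, and optimizing in $t$ yields the clean bound $\esp{\max_{i}\normm{W[b_i]}^2} \leq (\sqrt{2\log(L-s)}+\sqrt{B})^2$.

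Combining this moment estimate with Lemma~\ref{lem:gwidthbndgaugeana} gives
\[
w\Ppa{\calD_R(\avx) \cap \bbS^{n-1}}^2 \leq s\Ppa{\sqrt{2\log(L-s)}+\sqrt{B}}^2 + sB.
\]
Finally, an invocation of Corollary~\ref{cor:exactrecoverygwidth} with this Gaussian width upper bound produces both the stated sample complexity threshold and the probability estimate $1-3e^{-t\nu^2 m/8}$, concluding the proof. The only delicate point is the Parseval assumption, which intervenes twice — once to reduce the expectation bound to a $\chi^2_B$ estimate, and once in Lemma~\ref{lem:gwidthbndgaugeana} itself to pass from $\normm{Z-Dv}$ to $\normm{W-v}$ — and the rest is a routine assembly.
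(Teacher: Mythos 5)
Your proposal is correct and follows essentially the same route as the paper: the paper's argument is precisely the identification of the model parameters $(\dim(\T_{D^\top\avx}),\normm{\e{D^\top\avx}}^2,\sigma_\calC)$ from \eqref{eq:exglasso}--\eqref{eq:exglasso-g}, the MGF/Jensen chain to handle the correlated entries of $W=D^\top Z$, the diagonalization of $D_bD_b^\top$ with eigenvalues in $[0,1]$ and rank at most $B$ to dominate $\normm{W[b]}^2$ by a $\chi^2_B$ variable, optimization of the resulting bound in $t$, and finally Lemma~\ref{lem:gwidthbndgaugeana} plus Corollary~\ref{cor:exactrecoverygwidth}. You have correctly isolated the two places where the Parseval property enters, and no step is missing.
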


\begin{remark}$\ $
\begin{itemize}
    \item Consequently, it is sufficient to have $m \gtrsim s\log(p/B-s) + sB$ to ensure the exact recovery of a vector whose coefficients are $s$-sparse in a tight frame $D$, from $m$ phaseless measurements of a Gaussian map $A$. We are not aware of any such result in the phase recovery literature. 
    
    \item The analysis sparse case is directly covered by taking $B=1$ and $L=n$.
    
    \item Observe also that the sample complexity bound we get is nearly (up to constants) the same as for exact recovery from (linear) compressed sensing with Gaussian measurements \cite{candes2011simple}. 
    
\end{itemize}
\end{remark}

\subsection{Recovery bounds for total variation}\label{sec:recoverynoiselessgaussiantv}
Total variation (TV) corresponds to the case where the analysis operator $D^\top$ in \eqref{eq:Rana} is the (discrete) gradient $\nabla$ and $\gamma_\calC=\norm{\cdot}{1}$. In the 1D case, TV regularization reads
\[
R(x) = \norm{\nabla x}{1}, \qwhereq \nabla x[i] = x[i+1] - x[i], \qforq i = 1, 2, \ldots ,n-1 . 
\]
$R$ promotes signals $x$ whose gradient is sparse, $\abs{\supp(\nabla x)} \leq s$, or in other words, signals that are piecewise constant with at most $s$ jumps.

Bounding the Gaussian width of the descent cone of $R$ in this case is very challenging as $\nabla$ has a non-trivial kernel, and thus does not fit within the setting of the previous section. 
However, if the jumps of an $s$-gradient sparse signal $x$ are well separated, \cite{Genzel21} proposed a non-trivial construction of the dual vector to compute the Gaussian width of the descent cone of TV in 1D. More precisely, assume that there exists $\Delta > 0$ such that
\[
\min_{i \in \bbrac{s+1}} \frac{|k_i - k_{i-1}]}{n} \geq \frac{\Delta}{s+1}, 
\]
where $\supp(\nabla x) = \ens{i_1, \ldots, i_s}$ with $0 = i_0 < i_1 < \ldots < i_s < i_{s+1} = n$. It was shown in \cite[Theorem~2.10]{Genzel21} that if $\Delta \geq 8s/n$, then
\[
w\Ppa{\calD_{\norm{\nabla \cdot}{1}}(x) \cap \bbS^{n-1}}^2 \leq \frac{C}{\Delta} s \log(n)^2 , 
\]
for some numerical constant $C > 0$.

We are then able to state the following result.
\begin{proposition}\label{prop:exactrecoverygtv}
Let $\avx$ be such that its  gradient is sparse, $\abs{\supp(\nabla x)} \leq s$ such that its separation constant $\Delta$ verifies $\Delta \geq 8s/n$. Let $A: \bbR^n \to \bbR^m$ be a Gaussian map with \iid $\calN(0,1/m)$ entries such that  
\[
m \gtrsim \frac{1}{\Delta} s\log(n)^2 .
\]
Then with probability at least $1-3e^{-\frac{\nu^2m}{16}}$, the recovery of $\avx$ (up to a global sign) is exact by solving \eqref{eq:eqminregulPRexact} with $R=\norm{\nabla \cdot}{1}$.
\end{proposition}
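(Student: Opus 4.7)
The plan is to mirror the template used for the Lasso, group Lasso, and analysis group Lasso propositions: verify the hypotheses of Corollary~\ref{cor:exactrecoverygwidth}, then insert an a priori bound on the Gaussian width of the descent cone. The one-dimensional TV seminorm $R = \norm{\nabla \cdot}{1}$ is a finite-valued proper lsc convex function that is even symmetric, so hypothesis \ref{H:Reven} is met, and Corollary~\ref{cor:exactrecoverygwidth} is directly applicable once a Gaussian-width estimate is in hand.

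The key Gaussian-width estimate is the content of \cite[Theorem~2.10]{Genzel21}, already recalled just before the statement: under the separation condition $\Delta \geq 8s/n$, one has
\[
w\Ppa{\calD_{\norm{\nabla \cdot}{1}}(\avx) \cap \bbS^{n-1}}^2 \leq \frac{C}{\Delta}\, s \log(n)^2
\]
for some absolute constant $C > 0$. This is the only ingredient specific to the TV geometry; constructing a suitable dual certificate for a piecewise constant signal with well-separated jumps is precisely what \cite{Genzel21} does, and we do not need to redo that construction here.

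With this bound, I would plug into Corollary~\ref{cor:exactrecoverygwidth} with $t = 1/2$, yielding the sufficient sample-size condition
\[
m \geq \frac{96(\nu+2)^2}{\nu^4}\, w\Ppa{\calD_{\norm{\nabla \cdot}{1}}(\avx) \cap \bbS^{n-1}}^2
\]
with probability at least $1-3e^{-\nu^2 m/16}$. Absorbing the dimension-independent constants and the factor $C$ from the width bound into a generic $\gtrsim$, the hypothesis $m \gtrsim \tfrac{1}{\Delta} s \log(n)^2$ implies the corollary's measurement requirement. Exact recovery (up to a global sign) then follows.

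The only subtlety worth flagging is the existence of a minimizer to \eqref{eq:eqminregulPRexact}, since $R=\norm{\nabla\cdot}{1}$ has a nontrivial kernel (the constant signals) and so the coercivity-type hypothesis \ref{H:RinfA} of Proposition~\ref{prop:existencenoiseless} is not automatic. This is handled by observing that $R$ is positively homogeneous, so $R_\infty = R$, and $\Ker(R_\infty)$ is the one-dimensional subspace of constants; since $A$ is a continuous Gaussian map, almost surely $\Ker(A)$ contains no nonzero constant vector, so $\Ker(R_\infty) \cap \Ker(A) = \{0\}$ with probability one, and $\calS_{\avy,0} \neq \emptyset$ as required by Theorem~\ref{thm:exactrecoverydet} (invoked through Corollary~\ref{cor:exactrecoverygwidth}). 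I do not anticipate any genuine technical obstacle beyond this bookkeeping step, since the heavy lifting (the width bound for well-separated jump signals) is imported from \cite{Genzel21}.
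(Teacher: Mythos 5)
Your proposal matches the paper's (implicit) argument exactly: the proposition is obtained by inserting the Gaussian-width bound of \cite[Theorem~2.10]{Genzel21} into Corollary~\ref{cor:exactrecoverygwidth} with $t=1/2$, which is precisely where the probability $1-3e^{-\nu^2 m/16}$ and the constant $96(\nu+2)^2/\nu^4$ come from. Your additional remark on existence of minimizers (handling the nontrivial kernel of $\nabla$ via $\Ker(R_\infty)\cap\Ker(A)=\{0\}$ almost surely, or alternatively noting that constants lie in $\calD_R(\avx)$ so \ref{H:injdescent} already forces this on the same event) is a correct and welcome piece of bookkeeping that the paper leaves implicit.
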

\begin{remark}
As mentioned before, finding complexity bounds for TV minimization is quite challenging even in the compressed sensing literature. In this setting, \cite{needell_near-optimal_2013,needell_stable_2013, cai_guarantees_2015} showed, for two or higher dimensions signals, robust and stable recovery when $A$ is Gaussian and composed with orthonormal Haar wavelet transform. The complexity in this case is of order $m\geq s\text{PolyLog}(n,s)$. The success of this approach relies on establishing a connection between the compressibility of Haar wavelet representations and the bounded variation of a function and this does not hold in one dimension. We think that it might be possible to extend this result to the case of phase retrieval and we leave this as future work. 
\end{remark}


\section{Stable Recovery: Constrained Problem}\label{sec:robustconstrained}


When we have access only to inaccurate noisy measurements as in \eqref{eq:GeneralPR}, a natural formulation is one in which the equality constraint in \eqref{eq:eqminregulPRexact} is relaxed to an inequality leading to
\begin{equation}\tag{$\scrP_{y,\rho}$}\label{eq:eqminregulPRconst}
\inf_{x\in\bbR^n} R(x) \qsubjq \normm{y-|Ax|^2} \leq \rho ,
\end{equation}
where $\rho$ is an upper bound on the size of the noise $\epsi$. In the inverse problems literature, this formulation is known as the residual method or Mozorov regularization. In the following, we denote $\overbar{\calF}_{y,\rho} \eqdef \enscond{w \in \bbR^n}{\normm{y-|w|^2} \leq \rho}$. We obviously have $\overbar{\calF}_{\avy,0} = \overbar{\calF}$. We also use the shorthand notation $\calS_{y,\rho}$ for the set of minimizers of \eqref{eq:eqminregulPRconst}. \\

We start by showing that \eqref{eq:eqminregulPRconst} has minimizers. This result does not require convexity of $R$.
\begin{proposition}\label{prop:existenceconst}
Let $R: \bbR^n \to \barR$ be a proper and lsc function. Assume that $A(\dom(R)) \cap \overbar{\calF}_{y,\rho} \neq \emptyset$, and that assumptions \ref{H:Rbnd}-\ref{H:RinfA} of Proposition~\ref{prop:existencenoiseless} hold. Then $\calS_{y,\rho}$ is a non-empty compact set. 
\end{proposition}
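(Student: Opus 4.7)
The plan is to mirror the reasoning used in Proposition~\ref{prop:existencenoiseless}, the only substantive change being that the finite constraint set $\overbar{\calF}$ is replaced by the noisy feasible set $\overbar{\calF}_{y,\rho}$. The core observation will be that $\overbar{\calF}_{y,\rho}$ remains \emph{compact}, so its asymptotic cone is trivial, and the rest of the argument then transfers with essentially no change.

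First I would verify that $\overbar{\calF}_{y,\rho}$ is compact. Closedness is immediate since this set is the preimage of the closed ball $\ball(y,\rho)$ under the continuous map $w \mapsto |w|^2$. For boundedness, note that for every $w \in \overbar{\calF}_{y,\rho}$ the triangle inequality gives $\normm{|w|^2} \leq \normm{y} + \rho$, i.e.\ $\norm{w}{4}^2 \leq \normm{y} + \rho$, and equivalence of norms on $\bbR^m$ then yields a uniform bound on $\normm{w}$. Consequently $(\overbar{\calF}_{y,\rho})_\infty = \ens{0}$, exactly as was the case for the finite set $\overbar{\calF}$ in Proposition~\ref{prop:existencenoiseless}.

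Next I would set $G = R + \iota_{\overbar{\calF}_{y,\rho}} \circ A$. The qualification $A(\dom(R)) \cap \overbar{\calF}_{y,\rho} \neq \emptyset$ ensures that $G$ is proper, and it is lsc as a sum of lsc functions. Invoking the sum rule \cite[Proposition~2.6.1]{AT03} for asymptotic functions, together with the elementary inclusion $(A^{-1}(\calS))_\infty \subseteq A^{-1}(\calS_\infty)$ valid for any closed set $\calS$ (a direct consequence of the definition: if $x_k/t_k \to z$ with $x_k \in A^{-1}(\calS)$ and $t_k \to +\infty$, then $A(x_k) \in \calS$ and $A(x_k)/t_k \to Az$ by continuity of $A$, whence $Az \in \calS_\infty$), one obtains
\[
G_\infty(z) \geq R_\infty(z) + \iota_{\Ker(A)}(z) .
\]
Assumption \ref{H:Rbnd} gives $R_\infty \geq 0$, while assumption \ref{H:RinfA} forces $R_\infty$ to be strictly positive on $\Ker(A) \setminus \ens{0}$; combining these, $G_\infty(z) > 0$ for every $z \neq 0$, and the conclusion then follows from \cite[Corollary~3.1.2]{AT03}.

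The only real obstacle I anticipate is bookkeeping around the composition $\iota_{\overbar{\calF}_{y,\rho}} \circ A$, since $\overbar{\calF}_{y,\rho}$ is not convex. However, the one-sided inclusion above is all that is needed, and it holds for arbitrary closed sets without any convexity assumption, so the argument transfers verbatim from the noiseless case.
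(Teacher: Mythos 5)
Your proposal is correct and follows exactly the route the paper takes: the paper's own proof simply says it mirrors Proposition~\ref{prop:existencenoiseless} with $\overbar{\calF}$ replaced by $\overbar{\calF}_{y,\rho}$, relying on compactness of the latter, which you verify explicitly (closedness as a preimage under $w \mapsto |w|^2$ and boundedness via $\norm{w}{4}^2 \leq \normm{y}+\rho$) before running the same asymptotic-function argument. Your filled-in details, including the inclusion $(A^{-1}(\calS))_\infty \subseteq A^{-1}(\calS_\infty)$ for closed (not necessarily convex) $\calS$, are sound.
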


\begin{proof}
The proof is similar to that of Proposition~\ref{prop:existencenoiseless} replacing $\overbar{\calF}$ by $\overbar{\calF}_{y,\rho}$, and using compactness of the latter.
\end{proof}

We are now ready to state our (deterministic) stability result.

\begin{theorem}\label{thm:stablerecoveryconstdet}
Consider the noisy phaseless measurements in \eqref{eq:GeneralPR} where $\normm{\epsi} \leq \rho$. Assume that $\calS_{y,\rho} \neq \emptyset$ and $R$ verifies \ref{H:Reven}. Then, for any $\xsol_{y,\rho} \in \calS_{y,\rho}$, we have
\[
\dist(\xsol_{y,\rho},\avX) \leq \frac{2\rho}{s_{\min}} ,
\]
where 
\[
s_{\min} \eqdef \inf\enscondlr{\min_{I \subset \bbrac{m}, |I| \geq m/2} \normm{A^{I} z}}{z \in \calD_R(\avx) \cap \bbS^{n-1}} > 0 .
\]
\end{theorem}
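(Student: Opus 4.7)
The plan is to combine feasibility of $\avx$ with the parity of $R$ to place both sign candidates of $\xsol_{y,\rho}$ inside $\calD_R(\avx)$, and then to use a componentwise factorization of the phaseless residual to isolate an index set of cardinality $\geq m/2$ on which the definition of $s_{\min}$ applies.

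First, since $\normm{\epsi} \leq \rho$, $\avx$ is feasible for \eqref{eq:eqminregulPRconst}, so by optimality of $\xsol_{y,\rho}$ one gets $R(\xsol_{y,\rho}) \leq R(\avx)$. Setting $u \eqdef \xsol_{y,\rho} - \avx$ and $v \eqdef \xsol_{y,\rho} + \avx$, the descent-cone definition (with $t=1$) yields $u \in \calD_R(\avx)$, while evenness of $R$ (assumption \ref{H:Reven}) gives $R(\avx + (-v)) = R(-\xsol_{y,\rho}) = R(\xsol_{y,\rho}) \leq R(\avx)$, so $-v \in \calD_R(\avx)$ as well. Combining the feasibilities $\normm{y - |A\xsol_{y,\rho}|^2} \leq \rho$ and $\normm{y - |A\avx|^2} = \normm{\epsi} \leq \rho$ via the triangle inequality yields $\normm{|A\xsol_{y,\rho}|^2 - |A\avx|^2} \leq 2\rho$, and the algebraic key is the componentwise identity
\[
\Ppa{|A\xsol_{y,\rho}|^2 - |A\avx|^2}[r] = \pscal{a_r,u}\pscal{a_r,v}, \qquad r \in \bbrac{m} ,
\]
which follows directly from $(a_r^\top \xsol_{y,\rho})^2 - (a_r^\top \avx)^2 = a_r^\top u \cdot a_r^\top v$.

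Next, I partition according to magnitude: let $I \eqdef \enscond{r \in \bbrac{m}}{|\pscal{a_r,u}| \leq |\pscal{a_r,v}|}$, so that by pigeonhole at least one of $|I|$ or $|I^c|$ is $\geq m/2$, matching the cardinality threshold in the definition of $s_{\min}$. When $|I| \geq m/2$, for $r \in I$ one has $\pscal{a_r,u}^2 \leq |\pscal{a_r,u}\pscal{a_r,v}|$, which bounds $\normm{A^I u}$ in terms of the residual controlled by $2\rho$; applying the definition of $s_{\min}$ to $u/\normm{u} \in \calD_R(\avx) \cap \bbS^{n-1}$ then converts this into the claimed bound on $\normm{u}$. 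When instead $|I^c| \geq m/2$, the symmetric estimate $\pscal{a_r,v}^2 \leq |\pscal{a_r,u}\pscal{a_r,v}|$ for $r \in I^c$, combined with $-v/\normm{v} \in \calD_R(\avx) \cap \bbS^{n-1}$, bounds $\normm{v}$. Since $\dist(\xsol_{y,\rho}, \avX) = \min(\normm{u},\normm{v})$, whichever case holds delivers the result. The main subtlety I anticipate is the final transfer: carrying the $\ell_2$ bound on the componentwise product $\pscal{a_r,u}\pscal{a_r,v}$ to a genuine $\ell_2$ bound on $A^I u$ (or $A^{I^c} v$) on the distinguished subset must be done sharply enough to yield the stated constant $2\rho/s_{\min}$ rather than a looser scaling from a generic $\ell_1$–$\ell_2$ comparison.
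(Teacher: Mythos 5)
Your setup is sound and coincides with the paper's up to the point where the residual must be converted into a bound on $\normm{\xsol_{y,\rho}\mp\avx}$: feasibility of $\avx$ gives $R(\xsol_{y,\rho})\leq R(\avx)$, evenness places both $u=\xsol_{y,\rho}-\avx$ and $-v=-(\xsol_{y,\rho}+\avx)$ in $\calD_R(\avx)$, and your magnitude-based index set is essentially the paper's sign-agreement set (the signs of $\pscal{a_r,\xsol_{y,\rho}}$ and $\pscal{a_r,\avx}$ agree exactly when $\valabs{\pscal{a_r,u}}\leq\valabs{\pscal{a_r,v}}$). The gap is the final transfer, which you flag but do not close, and which cannot be closed along the route you propose. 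From $\pscal{a_r,u}^2\leq\valabs{\pscal{a_r,u}\pscal{a_r,v}}$ on $I$ you get
\[
\normm{A^Iu}^2\leq\sum_{r\in I}\valabs{\pscal{a_r,u}\pscal{a_r,v}}\leq\norm{|A\xsol_{y,\rho}|^2-|A\avx|^2}{1}\leq\sqrt{m}\,\normm{|A\xsol_{y,\rho}|^2-|A\avx|^2}\leq 2\sqrt{m}\,\rho ,
\]
hence $s_{\min}^2\normm{u}^2\leq 2\sqrt{m}\,\rho$ and $\normm{u}\leq\sqrt{2\rho}\,m^{1/4}/s_{\min}$. This is an $O(\sqrt{\rho})$ bound with a dimension-dependent factor, not the claimed $2\rho/s_{\min}$, and no sharper $\ell_1$--$\ell_2$ comparison can repair it: the product factorization inevitably produces a quantity that is \emph{quadratic} in $u$ on the distinguished index set, while the data only control that quantity at level $\rho$, so only a square-root rate can come out. (Trying instead to divide by $\valabs{\pscal{a_r,v}}$ fails for lack of a uniform lower bound on those terms.)

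The paper's proof takes a different route precisely to stay linear in $u$: it works with the \emph{amplitude} residual $|A\xsol_{y,\rho}|-|A\avx|$ rather than the intensity residual. On the sign-agreement set $I$ one has the exact identity $\valabs{\,\valabs{\pscal{a_r,\xsol_{y,\rho}}}-\valabs{\pscal{a_r,\avx}}\,}=\valabs{\pscal{a_r,u}}$, so that $\normm{|A\xsol_{y,\rho}|-|A\avx|}^2\geq\normm{A^Iu}^2\geq s_{\min}^2\normm{u}^2$ directly in $\ell_2$, with no factorization and no norm conversion; the amplitude residual is then bounded by $2\rho$ via the triangle inequality, yielding the linear rate. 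Be aware, though, that this last step implicitly reads $y$ as amplitude rather than intensity data: with the constraint $\normm{y-|Ax|^2}\leq\rho$ taken literally one only controls $\normm{|A\xsol_{y,\rho}|^2-|A\avx|^2}$, and passing from intensities to amplitudes reintroduces exactly the loss you ran into. So your instinct located a genuine friction point in the argument, but as written your proposal does not establish the stated $2\rho/s_{\min}$ bound.
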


\begin{proof}
The proof has a flavour of the reasoning in the proof of Theorem~\ref{thm:exactrecoverydet}. Let $I \subset \bbrac{m}$ such that $\pscal{a_r,\xsol_{y,\rho}} = \pscal{a_r,\avx}$ for all $r \in I$, and $I^c$ its complement where the inner products have opposite signs. Thus either $|I| \geq m/2$ or $|I^c| \geq m/2$. Assume that $|I| \geq m/2$. Then
\begin{align*}
\normm{|A\xsol_{y,\rho}| - |A\avx|}^2 
&= \normm{|A^I\xsol_{y,\rho}| - |A^I\avx|}^2 + \normm{|A^{I^c}\xsol_{y,\rho}| - |A^{I^c}\avx|}^2 \\
&\geq \normm{A^I\xsol_{y,\rho} - A^I\avx}^2 .
\end{align*}
Recall that $\avx \in \overbar{\calF}_{y,\rho}$ by assumption on the noise. Thus $R(\xsol_{y,\rho}) \leq R(\avx)$ and in turn $\xsol_{y,\rho} - \avx \in \calD_R(\avx)$. Therefore,
\begin{align*}
\normm{|A\xsol_{y,\rho}| - |A\avx|}^2 
\geq \normm{A^I(\xsol_{y,\rho} - \avx)}^2 
\geq s_{\min}^2\normm{\xsol_{y,\rho} - \avx}^2 .
\end{align*}
For the case where $|I^c| \geq m/2$, we argue similarly to infer that
\begin{align*}
\normm{|A\xsol_{y,\rho}| - |A\avx|}^2 
\geq \normm{A^{I^c}(\xsol_{y,\rho} + \avx)}^2
\geq s_{\min}^2\normm{\xsol_{y,\rho} + \avx}^2 .
\end{align*}
Overall, we have
\begin{align*}
\dist(\xsol_{y,\rho},\avX) \leq \frac{\normm{|A\xsol_{y,\rho}| - |A\avx|}}{s_{\min}} \leq \frac{\normm{y-|A\xsol_{y,\rho}|} + \normm{\epsi}}{s_{\min}} \leq \frac{2\rho}{s_{\min}} .
\end{align*}
\end{proof}

When $A$ is a standard Gaussian map, we obtain the following general error bound.
\begin{proposition}\label{prop:stablerecoveryconstgaussian}
Consider the noisy phaseless measurements in \eqref{eq:GeneralPR} where $\normm{\epsi} \leq \rho$. Suppose that \ref{H:Reven} holds. Let $\nu$ be as defined in Lemma~\ref{lem:sigAI} and $A$ be a Gaussian map with \iid $\calN(0,1/m)$ entries such that
\[
m \geq \frac{64(1+t)(\nu+2)^2}{\nu^4} w\Ppa{\calD_R(\avx) \cap \bbS^{n-1}}^2 
\]
for some $t > 0$. Then with probability at least $1-3e^{-\frac{t\nu^2m}{8}}$, the following holds:
\[
\dist(\xsol_{y,\rho},\avX) \leq \frac{4\rho}{\nu(1-1/\sqrt{2})} \qforanyq \xsol_{y,\rho} \in \calS_{y,\rho} .
\]
\end{proposition}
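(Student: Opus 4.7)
The plan is to combine the deterministic error bound of Theorem~\ref{thm:stablerecoveryconstdet} with a probabilistic lower bound on the quantity $s_{\min}$ defined there. Observe that the sample complexity condition here is exactly the one appearing in Corollary~\ref{cor:exactrecoverygwidth}, and the heavy lifting has essentially been performed in its proof together with that of Theorem~\ref{thm:exactrecoverygaussian}. So the proof is really just a careful bookkeeping of the constants obtained in the concentration argument underlying those results.

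First, I would invoke Theorem~\ref{thm:stablerecoveryconstdet}, which gives
\[
\dist(\xsol_{y,\rho},\avX) \leq \frac{2\rho}{s_{\min}}
\]
for any $\xsol_{y,\rho} \in \calS_{y,\rho}$, provided $s_{\min} > 0$. It therefore suffices to lower-bound $s_{\min}$ with high probability by a quantity of order $\nu(1-1/\sqrt{2})/2$, since plugging this into the deterministic bound above yields the stated constant $\frac{4\rho}{\nu(1-1/\sqrt{2})}$.

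Next, I would replay the covering argument of the proof of Theorem~\ref{thm:exactrecoverygaussian} verbatim on the set $\Omega = \calD_R(\avx) \cap \bbS^{n-1}$, applying Lemma~\ref{lem:sigAI} with $\delta=\nu/2$ on each point of an $\eps$-covering $\Omega_\eps$ of $\Omega$ and union-bounding. This yields exactly inequality \eqref{eq:sminconcent}, namely
\[
s_{\min} \;\geq\; \frac{\nu}{2} - \Ppa{1+\frac{\nu}{2}}\eps
\]
with probability at least $1-3e^{\log N(\Omega,\eps) - \nu^2 m/8}$. I would then choose $\eps = \frac{\nu}{\sqrt{2}(2+\nu)}$, which is the specific choice that powers Corollary~\ref{cor:exactrecoverygwidth}; a direct computation gives
\[
\frac{\nu}{2} - \Ppa{1+\frac{\nu}{2}}\eps \;=\; \frac{\nu}{2}\Ppa{1-\frac{1}{\sqrt{2}}}.
\]

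Finally, I would convert the covering-number tail estimate into a Gaussian-width one via Sudakov's minoration (Proposition~\ref{prop:Sudakov}): with this choice of $\eps$, the hypothesized bound on $m$ precisely implies $\log N(\Omega,\eps) \leq \frac{t\nu^2 m}{8}$, so that the probability of the good event becomes at least $1-3e^{-t\nu^2 m/8}$. Combining with the deterministic bound of Theorem~\ref{thm:stablerecoveryconstdet} yields the conclusion. There is really no technical obstacle here beyond tracking the constants; the main subtlety, already handled in the preceding Gaussian-width results, is ensuring that the same choice of $\eps$ simultaneously absorbs the covering number into the exponent and produces a clean closed-form lower bound on $s_{\min}$.
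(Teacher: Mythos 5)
Your proposal is correct and follows essentially the same route as the paper's proof: invoke Theorem~\ref{thm:stablerecoveryconstdet}, then lower-bound $s_{\min}$ by $\frac{\nu}{2}\Ppa{1-1/\sqrt{2}}$ via \eqref{eq:sminconcent} with $\eps=\frac{\nu}{\sqrt{2}(2+\nu)}$ together with Sudakov's minoration, exactly as in Corollary~\ref{cor:exactrecoverygwidth}. The only quibble is one inherited from the paper itself rather than introduced by you: the stated sample bound actually yields $\log N\Ppa{\calD_R(\avx)\cap\bbS^{n-1},\eps}\leq \frac{\nu^2 m}{8(1+t)}$ and hence a tail exponent of $-\frac{t}{1+t}\cdot\frac{\nu^2 m}{8}$ rather than $-\frac{t\nu^2 m}{8}$, but this constant-tracking detail does not affect the structure or validity of your argument.
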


\begin{proof}
Taking $\eps=\frac{\nu}{\sqrt{2}(2+\nu)}$ in \eqref{eq:sminconcent} as devised in Corollary~\ref{cor:exactrecoverygwidth}, we have
\[
s_{\min} \geq \nu/2(1-1/\sqrt{2}) > 0 
\] 
with probability at least $1-3e^{-\frac{t\nu^2m}{8}}$ under the bound on $m$. Combining this with Theorem~\ref{thm:stablerecoveryconstdet}, we conclude.
\end{proof}

In \cite{gao_stable_2016}, the authors studied the stability of $\ell_1-$norm phase retrieval against noise and showed that for $m \gtrsim s\log(n/s)$, any $s-$sparse vector can be stably recovered from measurement maps $A$ that satisfy the strong-RIP property. Our stability result here is RIP-less. Moreover it goes far beyond the $\ell_1-$norm. 
       
For Gaussian measurements, Proposition~\ref{prop:stablerecoveryconstgaussian} gives a sample complexity bound that depends on the Gaussian width of the descent cone. We can easily instantiate the last result for the regularizers studied in Section~\ref{sec:recoverynoiselessgaussiandecomp}, \ref{sec:recoverynoiselessgaussianana} and \ref{sec:recoverynoiselessgaussiantv}, which in turn will give sample complexity bounds for the error bound of Theorem~\ref{thm:exactrecoverygaussian} to hold. We refrain from doing this for the sake of brevity and the straightforward details are left to the reader.

Let us finally notice that despite the nice stability properties enjoyed by \eqref{eq:eqminregulPRconst}, this problem seems challenging to solve numerically. Indeed, although $R$ is convex, the constraint in \eqref{eq:eqminregulPRconst} is highly non-convex, and it is an open problem to design an efficient algorithmic scheme to solve it. On the other hand, as stated in the introduction, \eqref{eq:eqminregulPRpen} is amenable to the efficient Bregman Proximal Gradient algorithmic scheme proposed \cite{bolte_first_2017}. This is the reason we now turn our attention to \eqref{eq:eqminregulPRpen}.
        


\section{Stable Recovery: Penalized Problem}\label{sec:robustpenalized}

We now turn to study the noise-aware problem \eqref{eq:eqminregulPRpen}. In particular, the following questions will be of most interest to us:
\begin{enumerate}[label=(Q.\arabic*)]
	\item Convergence: how to ensure that for $\epsi \to 0$, the set of regularized solutions
	converges to either $\avx$ or $-\avx$ ? \label{Q:conv}
	
	\item Convergence rates: at which rate (in term of noise) the above convergence takes place ? \label{Q:rate}
\end{enumerate}

\medskip

We will answer these two questions in the rest of the paper. In a nutshell, we will show that we indeed have convergence as the noise vanishes, and stability only occurs locally, \ie for small enough noise, with an error bound that scales linearly with the noise level.

As we will see, studying the stability of \eqref{eq:eqminregulPRpen} is more involved than for \eqref{eq:eqminregulPRconst}. One of the main difficulties, which was also highlighted for linear inverse problems (see \cite{vaiter2015low}), is that a minimizer of \eqref{eq:eqminregulPRpen} is not anymore in the descent cone of $R$ at $\avx$.

In this section, we set 
\[
\calS_{y,\lambda} \eqdef \Argmin_{x \in \bbR^n} F_{y,\lambda}(x) ,
\]
where we recall the objective $F_{y,\lambda}$ from \eqref{eq:eqminregulPRpen}.

We begin by providing conditions for the existence of minimizers. Again, this does not need convexity of $R$.
\begin{proposition}\label{prop:existencepen}
	Let $R: \bbR^n \to \barR$ be a proper and lsc function. Assume that assumptions \ref{H:Rbnd}-\ref{H:RinfA} of Proposition~\ref{prop:existencenoiseless} hold. Then for any $\lambda > 0$ and $y \in \bbR^m$, problem \eqref{eq:eqminregulPRpen} has a non-empty compact set of minimizers. 
\end{proposition}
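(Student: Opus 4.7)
The plan is to follow the template of Propositions~\ref{prop:existencenoiseless} and \ref{prop:existenceconst}, using recession/asymptotic function analysis to establish a level-boundedness type behaviour of $F_{y,\lambda}$. Write $F_{y,\lambda} = h + \lambda R$ with $h(x) \eqdef \normm{y - |Ax|^2}^2$. Under \ref{H:Rbnd}, and since $h$ is continuous, non-negative with full domain, $F_{y,\lambda}$ is proper, lsc and bounded below. It then suffices to show that its asymptotic function has kernel reduced to $\ens{0}$ and to conclude via \cite[Corollary~3.1.2]{AT03}.

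The one genuinely new step is to compute $h_\infty$. Expanding, $h(tz^\prime) = \normm{y}^2 - 2 t^2 \pscal{y,|Az^\prime|^2} + t^4 \normm{|Az^\prime|^2}^2$, so $h(tz^\prime)/t$ is of order $t^3$ whenever $|Az^\prime|^2 \not\to 0$. Taking $z^\prime = z$ in the $\liminf$ of \eqref{eq:AF} yields $h_\infty(z) = 0$ for $z \in \Ker(A)$, while any sequence $z^\prime \to z$ with $Az \neq 0$ drives $h(tz^\prime)/t \to +\infty$ via the quartic term; hence $h_\infty = \iota_{\Ker(A)}$.

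Once this is at hand, the subadditivity of the asymptotic map \cite[Proposition~2.6.1]{AT03} (applicable without further qualification since $\dom(h) = \bbR^n$) gives $(F_{y,\lambda})_\infty \geq \iota_{\Ker(A)} + \lambda R_\infty$. For any $z \neq 0$, either $Az \neq 0$, which forces $(F_{y,\lambda})_\infty(z) = +\infty$, or $z \in \Ker(A) \setminus \ens{0}$, in which case \ref{H:RinfA} implies $z \notin \Ker(R_\infty)$ while \ref{H:Rbnd} implies $R_\infty \geq 0$, so $(F_{y,\lambda})_\infty(z) \geq \lambda R_\infty(z) > 0$. Thus $\Ker((F_{y,\lambda})_\infty) = \ens{0}$, and \cite[Corollary~3.1.2]{AT03} closes the argument.

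The only obstacle, and a mild one here, is the direct computation of $h_\infty$ along directions in $\Ker(A)$: one must check that the $\liminf$ in the definition of the asymptotic function is not inflated by a poor choice of approximating sequences $z^\prime \to z$. This is precisely where the quartic growth of $h$ in directions transverse to $\Ker(A)$ plays the decisive role; everything else is a routine invocation of the asymptotic-function machinery, mirroring the earlier existence results.
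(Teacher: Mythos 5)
Your argument is correct and is essentially the paper's own proof: the paper disposes of Proposition~\ref{prop:existencepen} by repeating the recession-function argument of Proposition~\ref{prop:existencenoiseless} with $\iota_{\overbar{\calF}}\circ A$ replaced by the coercive fidelity term, whose asymptotic function is likewise $\iota_{\Ker(A)}$, which is exactly the computation you carry out explicitly. Your verification that the quartic growth forces $h_\infty(z)=+\infty$ off $\Ker(A)$ is the only substantive step, and it is sound (note that only the lower bound $h_\infty \geq \iota_{\Ker(A)}$ is actually needed).
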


\begin{proof}
	The proof is similar to that of Proposition~\ref{prop:existencenoiseless} replacing $\iota_{\overbar{\calF}}$ by $\normm{y-|\cdot|^2}^2/2$, and the latter turns out to be a smooth and coercive function.
\end{proof}

\subsection{Convergence}
We start by answering \ref{Q:conv} above and proving the following convergence result for any minimizer $\xsol_{y,\lambda}$ of \eqref{eq:eqminregulPRpen}. This can be seen as a $\Gamma$-convergence result of the objective in \eqref{eq:eqminregulPRpen} to that of \eqref{eq:eqminregulPRexact}.
\begin{theorem}\label{thm:stablerecoverypendet}
	Consider the noisy phaseless measurements in \eqref{eq:GeneralPR}. Let $\sigma \eqdef \normm{\epsi}$. Assume that \ref{H:Reven}, \ref{H:injdescent} and assumptions \ref{H:Rbnd}-\ref{H:RinfA} of Proposition~\ref{prop:existencenoiseless} hold. Suppose also that
	\[
	\lambda \to 0 \qandq \sigma^2/\lambda \to 0, \qasq \sigma \to 0 .
	\]
	Then,
	\[
	|A \xsol_{y,\lambda}| \to |A \avx|, \quad R(\xsol_{y,\lambda}) \to R(\avx) \qandq \dist\Ppa{\xsol_{y,\lambda},\avX} \to 0 \qasq \sigma \to 0 .
	\]
\end{theorem}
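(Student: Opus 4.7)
The plan is a classical $\Gamma$-convergence-type argument for variational regularization, adapted to the phase retrieval setting. Consider an arbitrary sequence $\epsi_n \to 0$ with $\sigma_n \eqdef \normm{\epsi_n}$, a corresponding sequence $\lambda_n$ satisfying $\lambda_n \to 0$ and $\sigma_n^2/\lambda_n \to 0$, and let $\xsol_n \in \calS_{y_n,\lambda_n}$ (non-emptiness is furnished by Proposition~\ref{prop:existencepen}). The starting point is to test optimality of $\xsol_n$ against $\avx$, which is feasible and yields
\[
\normm{y_n - |A\xsol_n|^2}^2 + \lambda_n R(\xsol_n) \leq \normm{\epsi_n}^2 + \lambda_n R(\avx) = \sigma_n^2 + \lambda_n R(\avx).
\]
Splitting this inequality produces the twin bounds $\normm{y_n - |A\xsol_n|^2}^2 \leq \sigma_n^2 + \lambda_n R(\avx)$ and $R(\xsol_n) \leq \sigma_n^2/\lambda_n + R(\avx)$. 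The first bound, together with $y_n = |A\avx|^2 + \epsi_n$ and the triangle inequality, gives $\normm{|A\xsol_n|^2 - |A\avx|^2} \to 0$ and hence $|A\xsol_n| \to |A\avx|$ by continuity of the square root on $\bbR_+$. The second bound, coupled with $\sigma_n^2/\lambda_n \to 0$, yields $\limsup_n R(\xsol_n) \leq R(\avx)$.

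Next I would establish boundedness of $(\xsol_n)_n$, which is the main technical step since $R$ need not be coercive. Suppose for contradiction that $\normm{\xsol_{n_k}} \to +\infty$ along a subsequence. Set $z_k \eqdef \xsol_{n_k}/\normm{\xsol_{n_k}}$ and extract a further subsequence (not relabelled) with $z_k \to z$ and $\normm{z} = 1$. Since $\normm{A\xsol_n}$ is bounded by the previous step, $A z = \lim_k A\xsol_{n_k}/\normm{\xsol_{n_k}} = 0$. Moreover, by the very definition \eqref{eq:AF} of the recession function and assumption \ref{H:Rbnd},
\[
0 \leq R_\infty(z) \leq \liminf_k \frac{R(\normm{\xsol_{n_k}} z_k)}{\normm{\xsol_{n_k}}} = \liminf_k \frac{R(\xsol_{n_k})}{\normm{\xsol_{n_k}}} \leq 0 ,
\]
where the rightmost inequality uses that the numerator is bounded above (by the limsup bound established earlier) while the denominator diverges. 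Thus $z \in \Ker(R_\infty) \cap \Ker(A)$, contradicting assumption \ref{H:RinfA}.

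With boundedness in hand, every subsequence of $(\xsol_n)_n$ admits a further subsequence $\xsol_{n_k} \to x^\star$. Continuity gives $|A x^\star| = |A\avx|$, so $x^\star$ is feasible for \eqref{eq:eqminregulPRexact}, while lower semicontinuity of $R$ combined with the limsup bound gives $R(x^\star) \leq \liminf_k R(\xsol_{n_k}) \leq R(\avx)$. Hence $x^\star$ is a minimizer of \eqref{eq:eqminregulPRexact}, and Theorem~\ref{thm:exactrecoverydet} (whose assumptions \ref{H:Reven} and \ref{H:injdescent} are in force) yields $x^\star \in \avX$. Since $R$ is even, $R(x^\star) = R(\avx)$, and squeezing the liminf between $R(\avx)$ from below and the limsup from above gives $R(\xsol_{n_k}) \to R(\avx)$ along the extracted subsequence. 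The usual subsequence principle then promotes the convergences $\dist(\xsol_n, \avX) \to 0$ and $R(\xsol_n) \to R(\avx)$ to the full sequence. The main obstacle is the boundedness step above: it is precisely where the non-degeneracy condition \ref{H:RinfA} is invoked to rule out escape along directions in $\Ker(A)$ to which the regularizer is asymptotically insensitive; once boundedness is secured, the remaining arguments proceed along standard $\Gamma$-convergence lines combined with the noiseless uniqueness result.
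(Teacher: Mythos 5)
Your proposal is correct and follows essentially the same route as the paper: test optimality against $\avx$, split into the twin bounds $\normm{y-|A\xsol|^2}^2 \leq \sigma^2 + \lambda R(\avx)$ and $R(\xsol) \leq \sigma^2/\lambda + R(\avx)$, deduce $|A\xsol|^2 \to \avy$ and $\limsup R(\xsol) \leq R(\avx)$, extract a convergent subsequence, identify the limit via Theorem~\ref{thm:exactrecoverydet}, and finish with the subsequence principle. The only (harmless) variation is in the boundedness step: the paper places the iterates in a compact sublevel set of the coercive function $F_{\avy,1}$, whereas you run the normalization/recession-cone contradiction directly from the definition \eqref{eq:AF} and assumption \ref{H:RinfA} --- both arguments rest on the same hypothesis and your version is a correct, self-contained unpacking of the paper's coercivity claim.
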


\begin{proof}
	Let $y_k = \avy + \epsi_k$, $\sigma_k = \normm{\epsi_k}$ with $\sigma_k \to 0$ as $k \to +\infty$. Observe that for any $y_k$ and $\lambda_k > 0$ $\calS_{y_k,\lambda_k}$ is a non-empty compact set thanks to Proposition~\ref{prop:existencepen}. Let $\xsol_k \in \calS_{y_k,\lambda_k}$. We have by optimality that
	\begin{align*}
		\normm{y_k-|A \xsol_k|^2}^2 + \lambda_k R(\xsol_k)
		\leq \normm{y_k-\avy}^2 + \lambda_k R(\avx)
		= \sigma_k^2 + \lambda_k R(\avx) .
	\end{align*}
	Thus
	\begin{equation*}
		\begin{gathered}
			\normm{y_k-|A \xsol_k|^2}^2 \leq \lambda_k\Ppa{\sigma_k^2/\lambda_k + R(\avx)} \\
			\qandq \\
			R(\xsol_{y_k,\lambda_k}) \leq \sigma_k^2/\lambda_k + R(\avx) .
		\end{gathered}
	\end{equation*}
	In turn, 
	\[
	\normm{|A \xsol_k|^2 - \avy}^2 \leq 2\Ppa{\normm{y_k-|A \xsol_k|^2}^2 + \sigma_k^2} \leq 2\Ppa{\lambda_k\Ppa{\sigma_k^2/\lambda_k + R(\avx)} + \sigma_k^2} .
	\]
	Since the right hand side of this inequality goes to $0$ as $k \to +\infty$, we deduce that 
	\begin{equation}\label{eq:limAxk}
	\lim_{k \to +\infty} |A \xsol_k|^2 = \avy .
	\end{equation}
	Moreover,
	\begin{equation}\label{eq:limRxk}
	\limsup_{k \to +\infty} R(\xsol_k) \leq R(\avx) .
	\end{equation}
	We therefore obtain
	\[
	\limsup_{k \to +\infty} F_{\avy,1}(\xsol_k)  = \limsup_{k \to +\infty} \Ppa{\normm{|A \xsol_k|^2 - \avy}^2 + R(\xsol_k)} \leq R(\avx) .
	\]
	This means that there exists $k_0 \in \bbN$ such that $\Ppa{\xsol_k}_{k \geq k_0}$ belongs to the sublevel set of
	$F_{\avy,1}$ at $2R(\avx)$, that we denote $\calC_F$. Since $F_{\avy,1}$ is lsc and coercive under our assumptions, its sublevel sets are compact and so is $\calC_F$. In turn, $\Ppa{\xsol_k}_{k \geq k_0}$ lives on the compact set $\calC_F$. The sequence thus possesses a convergent subsequence and every accumulation point lies $\calC_F$. Let $\Pa{\xsol_{k_j}}_{j \in \bbN}$ be a convergent subsequence, say $\xsol_{k_j} \to x^*$. We have $|A\xsol_{k_j}|^2 \to |Ax^*|^2$, and in view of \eqref{eq:limAxk}, we obtain 
	\begin{equation}\label{eq:x*feasible}
	|Ax^*|^2 = \avy . 
	\end{equation}
	Moreover, by lower-semicontinuity of $R$ and \eqref{eq:limRxk}, 
	\begin{equation}\label{eq:x*minimal}
	R(x^*) \leq \liminf_{j \to +\infty} R(x^\star_{k_j}) \leq \limsup_{j \to +\infty} R(\xsol_{k_j}) \leq R(\avx) .
	\end{equation}
	Invoking Theorem~\ref{thm:exactrecoverydet} (which holds under our assumptions), \eqref{eq:x*feasible} and  \eqref{eq:x*minimal} mean that $x^* \in \calS_{\avy,0} = \avX$. \eqref{eq:x*minimal} also tells us that $R(\avx) = R(x^*)$, and thus we have $R(x^\star_{k_j}) \to R(\avx)$. Since this holds for any convergent subsequence, we conclude.
\end{proof}

Thanks to Corollary~\ref{cor:exactrecoverygwidth}, we know that \ref{H:injdescent} holds with high probability when $A$ is a Gaussian map provided that $m$ is large enough. Combining this with Theorem~\ref{thm:stablerecoverypendet}, we get the following asymptotic robustness result.
\begin{proposition}\label{prop:stablerecoverypengaussian}
Consider the noisy phaseless measurements in \eqref{eq:GeneralPR} and let $\sigma \eqdef \normm{\epsi}$. Assume that $R$ fullfills \ref{H:Reven} and assumptions \ref{H:Rbnd}-\ref{H:RinfA} of Proposition~\ref{prop:existencenoiseless} hold. Suppose also that
	\[
	\lambda \to 0 \qandq \sigma^2/\lambda \to 0, \qasq \sigma \to 0 .
	\]
	Let $\nu$ be as defined in Lemma~\ref{lem:sigAI}, and $A$ be a Gaussian map with \iid $\calN(0,1/m)$ entries such that
	\[
	m \geq \frac{64(1+t)(\nu+2)^2}{\nu^4} w\Ppa{\calD_R(\avx) \cap \bbS^{n-1}}^2 
	\]
	for some $t > 0$. Then with probability at least $1-3e^{-\frac{t\nu^2m}{8}}$,
	\[
	\dist\Ppa{\xsol_{y,\lambda},\avX} \to 0 \qasq \sigma \to 0 .
	\]
\end{proposition}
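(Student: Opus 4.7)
The plan is essentially to combine the probabilistic machinery already developed for exact recovery with the deterministic convergence result of Theorem~\ref{thm:stablerecoverypendet}. The only hypothesis of Theorem~\ref{thm:stablerecoverypendet} that is not already assumed in Proposition~\ref{prop:stablerecoverypengaussian} is the injectivity condition \ref{H:injdescent}. Hence the whole task reduces to showing that, under the stated sample complexity on $m$, \ref{H:injdescent} holds with probability at least $1-3e^{-t\nu^2 m/8}$ when $A$ is a standard Gaussian map.

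Concretely, I would first observe that \ref{H:injdescent} is equivalent to the statement
\[
\min_{I \subset \bbrac{m}, |I| \geq m/2} \normm{A^{I} z} > 0 \qforallq z \in \calD_R(\avx) \cap \bbS^{n-1},
\]
since $\calD_R(\avx)$ is a cone. This is precisely the event that was already shown to hold with high probability in the proof of Theorem~\ref{thm:exactrecoverygaussian}: the net/covering argument together with Lemma~\ref{lem:sigAI} establishes \eqref{eq:sminconcent}, namely the existence of a strictly positive uniform lower bound on $\min_{|I| \geq m/2}\normm{A^I z}$ over $\calD_R(\avx) \cap \bbS^{n-1}$, with probability at least $1-3e^{\log N(\calD_R(\avx) \cap \bbS^{n-1},\eps) - \nu^2 m/8}$. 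Choosing $\eps = \nu/(\sqrt{2}(2+\nu))$ and invoking the Sudakov lower bound of Proposition~\ref{prop:Sudakov} to turn the covering number into the Gaussian width, exactly as in Corollary~\ref{cor:exactrecoverygwidth}, converts the sample-complexity hypothesis $m \geq 64(1+t)(\nu+2)^2/\nu^4 \cdot w(\calD_R(\avx) \cap \bbS^{n-1})^2$ into the bound $1-3e^{-t\nu^2 m/8}$ on the probability of the event that \ref{H:injdescent} holds.

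Conditional on that event, all the assumptions of Theorem~\ref{thm:stablerecoverypendet} are in force: \ref{H:Reven} and \ref{H:Rbnd}-\ref{H:RinfA} are assumed, and \ref{H:injdescent} has just been established. An application of Theorem~\ref{thm:stablerecoverypendet} then yields $\dist(\xsol_{y,\lambda},\avX) \to 0$ as $\sigma \to 0$ (with $\lambda \to 0$ and $\sigma^2/\lambda \to 0$), which is the desired conclusion.

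There is no genuine obstacle here, as the heavy lifting (the covering argument, the Gaussian concentration through Lemma~\ref{lem:sigAI}, and the deterministic convergence through $\Gamma$-convergence arguments) has already been done. The only minor thing to be careful about is that Corollary~\ref{cor:exactrecoverygwidth} is stated as an exact-recovery result, whereas here we only need the underlying probabilistic event (the uniform strict positivity of $\min_{|I| \geq m/2}\normm{A^I z}$ on the unit-sphere slice of the descent cone); this event is precisely the content of inequality \eqref{eq:sminconcent} in the proof of Theorem~\ref{thm:exactrecoverygaussian}, so we may simply cite that proof rather than re-deriving the argument.
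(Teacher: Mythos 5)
Your proposal is correct and follows essentially the same route as the paper: the text preceding the proposition argues exactly that the sample-complexity hypothesis, via Corollary~\ref{cor:exactrecoverygwidth} (i.e., the event \eqref{eq:sminconcent} combined with Sudakov's minoration), guarantees \ref{H:injdescent} with probability at least $1-3e^{-t\nu^2 m/8}$, after which Theorem~\ref{thm:stablerecoverypendet} applies on that event. Your remark that one should invoke the underlying probabilistic event rather than the exact-recovery conclusion of the corollary is a sound and welcome clarification.
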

This result can be specialized with the corresponding sample complexity bounds for each of the regularizers considered in Section~\ref{sec:recoverynoiselessgaussiandecomp}, \ref{sec:recoverynoiselessgaussianana} and \ref{sec:recoverynoiselessgaussiantv}. We omit again the details which are left to the reader.

\subsection{Convergence rate}
We now turn to answering \ref{Q:rate} by quantifying the rate at which convergence of Theorem~\ref{thm:stablerecoverypendet} occurs. This will be possible under more stringent conditions. For instance, we will require the noise to be small enough to that the rate is actually local as it is already known for phase retrieval in the un-regularized case. We moreover need a non-degeneracy condition and a restricted injectivity conditions which are standard in inverse problems; see Remark~\ref{rem:ratestablepen} for a detailed discussion. 

To lighten notation, let us denote 
\[
\BAx \eqdef \diag{A\avx} A.
\] 
The rationale behind this operator is that $\BAx \avx = |A\avx|^2$, and thus $\BAx$ will appear naturally from a linearization of the forward model. Indeed, $\BAx$ is nothing but the Jacobian of the non-linear mapping $x \in \bbR^n \mapsto |A x|^2/2$ at $\avx$.

Although the following result can be stated for general symmetric convex regularizers $R$, to avoid additional technicalities and make the presentation simpler, we will restrict our attention to the case of analysis-type symmetric strong gauges which will be sufficient for our purposes. More precisely, $R$ will be of the form \eqref{eq:Rana}, where $D$ is a Parseval tight frame and $\gamma_\calC$ is a symmetric strong gauge. We recall the definition, notations, and properties of Section~\ref{sec:recoverynoiselessgaussiandecomp}.
\begin{theorem}\label{thm:ratestablependet}
Consider the noisy phaseless measurements in \eqref{eq:GeneralPR}. Let $\sigma \eqdef \normm{\epsi}$. Assume that $R$ is as in \eqref{eq:Rana}, where $D$ is a Parseval tight frame and $\gamma_\calC$ is a symmetric strong gauge, and let $\lambda = c\sigma$, for some $c > 0$. Then, the following holds.
\begin{enumerate}[label=(\roman*)]
\item If 
\begin{equation}\label{eq:sc}\tag{SC}
\exists q \in \bbR^m \quad s.t. \quad {\BAx}^\top q \in \partial R(\avx) ,
\end{equation}
then for any minimizer $\xsol_{y,\lambda} \in \calS_{y,\lambda}$,
\begin{equation}\label{eq:boundpredictionbregman}
\begin{gathered}
\normm{|A\xsol_{y,\lambda}|^2 - |A\avx|^2} 
\leq \frac{\normm{A}^2}{2}\normm{\xsol_{y,\lambda}-\avx}^2 + \Ppa{2+c\normm{q}}\sigma \\ \qandq \\
\breg{R}{v}{\xsol_{y,\lambda}}{\avx}
\leq \frac{\normm{q}}{2}\normm{A}^2\normm{\xsol_{y,\lambda}-\avx}^2 + \frac{\Ppa{2+\frac{c}{2}\normm{q}}^2}{2c}\sigma .
\end{gathered}
\end{equation}

\item If
\begin{equation}\label{eq:nondeg}\tag{NDSC}
\exists q \in \bbR^m \quad s.t. \quad {\BAx}^\top q \in \ri(\partial R(\avx))
\end{equation}
and 
\begin{equation}\label{eq:restinj}\tag{RI}
\ker(\BAx) \cap \Im(D_{\T_{D^\top\avx}}) = \ens{0} .
\end{equation}
then for $\sigma$ small enough and any minimizer $\xsol_{y,\lambda} \in \calS_{y,\lambda}$, we have
\[
\dist\Ppa{\xsol_{y,\lambda},\avX} \leq C \sigma ,
\]
where $C>0$ is a constant which depends in particular on $A$, $\T_{D^\top\avx}$, $c$ and $q$.
\end{enumerate}
\end{theorem}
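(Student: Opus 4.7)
The plan is to start from the first-order optimality inequality $F_{y,\lambda}(\xsol_{y,\lambda}) \leq F_{y,\lambda}(\avx) = \sigma^2 + \lambda R(\avx)$ and, after expanding the fidelity with $y = |A\avx|^2 + \epsi$, work with the fundamental inequality
\begin{equation*}
\normm{|A\xsol_{y,\lambda}|^2 - |A\avx|^2}^2 + \lambda\Ppa{R(\xsol_{y,\lambda}) - R(\avx)} \leq 2\pscal{\epsi,\, |A\xsol_{y,\lambda}|^2 - |A\avx|^2}.
\end{equation*}
The other workhorse is the algebraic identity
\begin{equation*}
|Ax|^2 - |A\avx|^2 = 2\BAx(x-\avx) + \Ppa{A(x-\avx)}^2,
\end{equation*}
which linearizes the quadratic forward model around $\avx$, turning bilinear pairings involving $\BAx^\top q$ into bounds on the prediction residual at the price of a quadratic remainder.

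For part~(i), I would set $v \eqdef \BAx^\top q \in \partial R(\avx)$ from \eqref{eq:sc} and rewrite $R(\xsol_{y,\lambda})-R(\avx) = \breg{R}{v}{\xsol_{y,\lambda}}{\avx} + \pscal{v,\xsol_{y,\lambda}-\avx}$. Using the identity above to expand $\pscal{v,\xsol_{y,\lambda}-\avx} = \pscal{q,\BAx(\xsol_{y,\lambda}-\avx)}$, then Cauchy--Schwarz on the $q$- and $\epsi$-pairings, and the componentwise bound $\pscal{q,(A\cdot)^2}\leq \normm{q}\normm{A}^2\normm{\cdot}^2$, collapses the fundamental inequality into a scalar quadratic of the shape
\begin{equation*}
D^2 + \lambda\breg{R}{v}{\xsol_{y,\lambda}}{\avx} \leq \Ppa{2\sigma + \tfrac{\lambda\normm{q}}{2}} D + \tfrac{\lambda\normm{q}}{2}\normm{A}^2\normm{\xsol_{y,\lambda}-\avx}^2,
\end{equation*}
where $D \eqdef \normm{|A\xsol_{y,\lambda}|^2-|A\avx|^2}$. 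Young's inequality $2\alpha D \leq \alpha^2 + D^2$ applied to the linear term, followed by substituting $\lambda = c\sigma$, yields both announced bounds (the first by dropping the nonnegative Bregman term and a completion-of-square, the second by dropping $D^2/2$ and dividing by $\lambda$).

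For part~(ii), I need two-sided control of $\xsol_{y,\lambda}-\avx$ along the model decomposition at $D^\top\avx$. Using $DD^\top = \Id_n$ I would split $\xsol_{y,\lambda}-\avx = u_T + u_S$, where
\begin{equation*}
u_T \eqdef D\proj_{\T_{D^\top\avx}}D^\top(\xsol_{y,\lambda}-\avx) \in \Im(D_{\T_{D^\top\avx}}),\qquad u_S \eqdef D\proj_{\S_{D^\top\avx}}D^\top(\xsol_{y,\lambda}-\avx).
\end{equation*}
Since $D$ is surjective, \eqref{eq:nondeg} produces $v = Dv'$ with $v'_{\T_{D^\top\avx}} = \e{D^\top\avx}$ and $\sigma_\calC(v'_{\S_{D^\top\avx}}) = 1-\rho$ for some $\rho>0$. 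Combined with the sup-duality $\sup_{\sigma_\calC(\cdot)\leq 1}\pscal{\cdot,\cdot} = \gamma_\calC(\cdot)$ restricted to $\S_{D^\top\avx}$, this yields the strong-subgradient lower bound
\begin{equation*}
\breg{R}{v}{\xsol_{y,\lambda}}{\avx} \geq \rho\, \gamma_\calC\Ppa{\Ppa{D^\top(\xsol_{y,\lambda}-\avx)}_{\S_{D^\top\avx}}} \geq \rho c_\calC \normm{u_S},
\end{equation*}
where $c_\calC > 0$ comes from coercivity of $\gamma_\calC$ (valid since $\calC$ is bounded). The restricted injectivity \eqref{eq:restinj} makes $\BAx$ bounded below on the finite-dimensional subspace $\Im(D_{\T_{D^\top\avx}})$ by some $c_T > 0$, hence $\normm{u_T} \leq c_T^{-1}\Ppa{\normm{\BAx(\xsol_{y,\lambda}-\avx)} + \normm{\BAx}\normm{u_S}}$. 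Finally, the linearization identity gives $2\normm{\BAx(\xsol_{y,\lambda}-\avx)} \leq D + \normm{A}^2\normm{\xsol_{y,\lambda}-\avx}^2$, and plugging in the two bounds of part~(i) produces, with $\eta \eqdef \normm{\xsol_{y,\lambda}-\avx}$, the quadratic estimate $\eta \leq \normm{u_T}+\normm{u_S} \leq C_1\eta^2 + C_2\sigma$ for constants $C_1,C_2$ depending on $A,\T_{D^\top\avx},c,q$.

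The main obstacle, and the reason the rate is only local, is that the quadratic remainder $\normm{A}^2\eta^2$ is absent in the linear inverse-problems analogue. I would close the argument by invoking Theorem~\ref{thm:stablerecoverypendet}, which guarantees $\dist(\xsol_{y,\lambda},\avX)\to 0$ as $\sigma\to 0$; up to replacing $(\avx,q)$ by $(-\avx,-q)$ (legitimate since $R$ is even, $|A(-\avx)|^2 = |A\avx|^2$, and $B_{-A\avx}=-\BAx$, so \eqref{eq:nondeg}--\eqref{eq:restinj} transfer verbatim), I may assume $\eta = \dist(\xsol_{y,\lambda},\avX)$ and $\eta \leq 1/(2C_1)$ for $\sigma$ small enough. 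Absorbing $C_1\eta^2 \leq \eta/2$ into the left-hand side gives $\eta \leq 2C_2\sigma$, the claimed linear rate. The two delicate points are (a) transferring the subgradient decomposition through $D^\top$, which crucially uses $DD^\top = \Id_n$ to ensure $\ri(\partial R(\avx)) = D\ri(\partial\gamma_\calC(D^\top\avx))$, and (b) the sign-symmetry reduction for phase retrieval, which relies on the full strength of \ref{H:Reven}.
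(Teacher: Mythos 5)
Your proposal is correct and follows essentially the same route as the paper's proof: the same optimality inequality combined with the linearization identity and Cauchy--Schwarz/Young manipulations for part~(i), and for part~(ii) the same $\T/\S$ decomposition through $DD^\top=\Id_n$, the same Bregman lower bound on the $\S$-component (the paper's Lemma~\ref{lem:nonsat}), restricted injectivity on $\Im(D_{\T_{D^\top\avx}})$, the self-bounding quadratic inequality, and the local absorption via Theorem~\ref{thm:stablerecoverypendet} together with the sign-symmetry reduction. The only cosmetic difference is that you replace the gauge by a Euclidean norm via coercivity where the paper keeps operator norms $\anormOP{\cdot}{\gamma_\calC}{2}$, and your "$\sigma_\calC(v'_{\S_{D^\top\avx}})=1-\rho$" should read "$\leq 1-\rho$ for some $\rho>0$".
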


A few remarks are in order before we proceed with the proof.
\begin{remark}\label{rem:ratestablepen}{\ }
	\begin{itemize}
		\item since $R$ verifies all assumptions of Theorem~\ref{thm:stablerecoverypendet}, we have that $\calS_{y,\lambda}$ are bounded uniformly in $(y,\lambda)$, and it follows from \eqref{eq:boundpredictionbregman} that the error $\normm{|A\xsol_{y,\lambda}|^2 - |A\avx|^2}$ is global and scales as $O(\max(\sigma^{1/2},\sigma))$.
		
		\item The error bound of Theorem~\ref{thm:ratestablependet} tells us that for small noise, the distance of any minimizer of \eqref{eq:eqminregulPRpen} to $\avX$ is within a factor of the noise level, which justifies the terminology "linear convergence rate" known in the inverse problem literature. 
		
		\item Non-degenerate Source Condition: the condition \eqref{eq:nondeg} is a strengthened or non-degenerate version of \eqref{eq:sc} well-known as the "source condition" or "range condition" in the literature of inverse problems; see \cite{scherzer2009} for a general overview of this condition and its implications. In this case, $v = \BAx^\top q$ is called a non-degenerate "dual" certificate\footnote{Strictly speaking, the terminology "dual" may seem awkward because of non-convexity of the phase retrieval problem \eqref{eq:eqminregulPRpen} though it is weakly convex.}; see \cite{vaiter2015low} for a detailed discussion in the case of linear inverse problems.
		
		\item Restricted Injectivity: the condition \eqref{eq:restinj} is only favorable when $\gamma_\calC$ is non-smooth at $D^\top\avx$, hence the intuition that $\gamma_\calC$ (hence $R$) promotes low-dimensional vectors. Indeed, the higher the degree of non-smoothness, the lower the dimension of the subspace $\T_{D^\top\avx}$, and hence the less number of measurements is needed for \eqref{eq:restinj} to hold. From the calculus rules in \cite[Proposition~10(i)-(ii)]{vaiterimaiai13}, the model subspace of the regularizer $R$ at $\avx$ is $\Ker(D_{\S_{D^\top\avx}}^\top)$. Since $D$ is a Parseval tight frame, one can easily show that $\Ker(D_{\S_{D^\top\avx}}^\top) \subseteq \Im(D_{\T_{D^\top\avx}})$, with equality if $D$ orthonormal. Thus \eqref{eq:restinj} implies that $\BAx$ is injective on the model subspace of $R$ at $\avx$, which is a minimal requirement to ensure recovery as is known even for linear inverse problems.
		
		\item Convergence rates for regularized non-linear inverse problems were studied in \cite{scherzer2009}. However, their conditions are too stringent and do not hold for the case of phase retrieval by solving \eqref{eq:eqminregulPRpen}.
	\end{itemize}
\end{remark}

The following lemma is a key step towards establishing our error bound.
\begin{lemma}\label{lem:nonsat}
Let $R$ be as in \eqref{eq:Rana} where $D \in \bbR^{p \times n}$ and $\gamma_\calC$ is a strong gauge of $\calC$. Let $x \in \bbR^n$. Then, for any $w \in \ri(\gamma_{\calC}(D^\top x))$ and $z \in \bbR^n$
\begin{align*}
		\gamma_\calC\Ppa{\proj_{\S_{D^\top x}}D^\top(z - x)} \leq \frac{\breg{R}{Dw}{z}{x}}{1-\sigma_\calC\Ppa{w_{\S_{D^\top x}}}} = \frac{\breg{\gamma_\calC}{w}{D^\top z}{D^\top x}}{1-\sigma_\calC\Ppa{w_{\S_{D^\top x}}}} .
\end{align*}
\end{lemma}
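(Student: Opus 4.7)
The second equality is routine: since $R = \gamma_\calC \circ D^\top$ and $\pscal{Dw, z-x} = \pscal{w, D^\top(z-x)}$, the chain rule applied to the Bregman divergence gives $\breg{R}{Dw}{z}{x} = \breg{\gamma_\calC}{w}{D^\top z}{D^\top x}$ immediately. So the real work lies in the first inequality.

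I will set $u = D^\top x$, $u' = D^\top z$, $\Delta = u'-u$, decomposed as $\Delta = \Delta_\T + \Delta_\S$ with respect to the orthogonal splitting $\bbR^p = \T_u \oplus \S_u$, and reduce the claim to showing
\[
\breg{\gamma_\calC}{w}{u'}{u} \geq \pa{1 - \sigma_\calC(w_\S)} \gamma_\calC(\Delta_\S).
\]
The starting point is the subgradient inequality: for any $\bar w \in \partial \gamma_\calC(u)$, convexity yields $\breg{\gamma_\calC}{w}{u'}{u} \geq \pscal{\bar w - w, \Delta}$. Exploiting the strong-gauge decomposition \eqref{eq:decompstrong}, both $\bar w$ and $w$ project onto $\T$ as $\e{u}$, so $\bar w - w \in \S$ and the inner product collapses to $\pscal{\bar w_\S - w_\S, \Delta_\S}$. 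Parametrizing $\bar w_\S = \zeta \in \S$ with $\sigma_\calC(\zeta) \leq 1$, I then optimize by choosing $\zeta$ to maximize $\pscal{\zeta, \Delta_\S}$ over this feasible set.

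The pivotal structural step is to recognize that this optimized value equals $\gamma_\calC(\Delta_\S)$. From the duality $\gamma_\calC = \sigma_{\calC^\circ}$ and $\Delta_\S \in \S$,
\[
\gamma_\calC(\Delta_\S) = \sup_{v \in \calC^\circ}\pscal{v, \Delta_\S} = \sup_{v \in \calC^\circ}\pscal{v_\S, \Delta_\S},
\]
and the strong-gauge structure ensures the compatibility $\proj_\S \calC^\circ \subseteq \calC^\circ$, so a maximizer can be selected within $\S \cap \calC^\circ = \{\zeta \in \S : \sigma_\calC(\zeta) \leq 1\}$. With this identification in hand,
\[
\breg{\gamma_\calC}{w}{u'}{u} \geq \gamma_\calC(\Delta_\S) - \pscal{w_\S, \Delta_\S},
\]
and Fenchel--Young applied to the gauge pair $(\sigma_\calC, \gamma_\calC)$, in the form $\pscal{w_\S, \Delta_\S} \leq \sigma_\calC(w_\S)\gamma_\calC(\Delta_\S)$, yields the clean lower bound $(1 - \sigma_\calC(w_\S)) \gamma_\calC(\Delta_\S)$. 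Since $w \in \ri\, \partial \gamma_\calC(u)$ forces $\sigma_\calC(w_\S) < 1$ strictly, dividing completes the argument.

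The main obstacle is the compatibility claim $\proj_\S\calC^\circ \subseteq \calC^\circ$. It is transparent for the canonical strong gauges ($\ell_1$, group $\ell_{1,2}$, nuclear norm) where $\S_u$ is aligned with the facet structure of $\calC^\circ$, but a purely abstract derivation from the strong-gauge definition must exploit the fact that the face $\partial \gamma_\calC(u) = (\e{u} + \S) \cap \calC^\circ$ of $\calC^\circ$ is, by design, parallel to $\S$, and this parallelism is exactly what propagates the inclusion of the $\S$-shadow of $\calC^\circ$ into its $\S$-cross-section.
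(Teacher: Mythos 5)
Your overall route is the same as the paper's: lower-bound the Bregman divergence by $\pscal{\bar w - w, D^\top(z-x)}$ for a second subgradient $\bar w$, use the decomposability \eqref{eq:decompstrong} to collapse this to an inner product on $\S_{D^\top x}$, optimize over the admissible $\S$-components $\zeta$ with $\sigma_\calC(\zeta)\leq 1$, and finish with the gauge--support duality $\pscal{w_{\S},\Delta_\S}\leq \sigma_\calC(w_\S)\gamma_\calC(\Delta_\S)$. The difference is entirely in how the pivotal identity
\[
\sup\enscond{\pscal{\zeta,\Delta_\S}}{\zeta \in \S_{D^\top x},\ \sigma_\calC(\zeta)\leq 1} \;=\; \gamma_\calC(\Delta_\S)
\]
is justified, and this is where your argument has a genuine gap. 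You reduce it to the inclusion $\proj_{\S_{D^\top x}}\calC^\circ \subseteq \calC^\circ$, but you do not prove that inclusion: the closing paragraph ("the parallelism is exactly what propagates the inclusion of the $\S$-shadow into its $\S$-cross-section") is a heuristic, not a derivation. The strong-gauge definition gives you that the exposed face $\partial\gamma_\calC(D^\top x) = (\e{D^\top x}+\S_{D^\top x})\cap\calC^\circ$ coincides with $\e{D^\top x} + (\S_{D^\top x}\cap\calC^\circ)$; this is a statement about one affine slice of $\calC^\circ$ and does not, on its face, control the orthogonal projection of the \emph{whole} body $\calC^\circ$ onto $\S_{D^\top x}$. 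Verifying it for $\ell_1$, $\ell_{1,2}$ and the nuclear norm, as you note, is easy, but the lemma is stated for an arbitrary strong gauge and the abstract step is precisely the nontrivial content.

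The paper closes exactly this step by invoking an external result (\cite[Theorem~1]{luu_sharp_2020}), which asserts that for any $\omega$ there exists $\tilde u \in \partial\gamma_\calC(D^\top x)$ with $\gamma_\calC(\omega_{\S_{D^\top x}}) = \pscal{\tilde u_{\S_{D^\top x}},\omega_{\S_{D^\top x}}}$ --- i.e., the supremum above is attained and equals $\gamma_\calC(\Delta_\S)$. So you have correctly isolated the right statement to prove, and everything surrounding it is sound, but as written the proof is incomplete: you must either prove $\proj_{\S_{D^\top x}}\calC^\circ\subseteq\calC^\circ$ from the strong-gauge axioms (or exhibit why it could fail, in which case your reduction is not the right one) or, as the paper does, cite a result that delivers the attainment of $\gamma_\calC(\Delta_\S)$ within the subdifferential directly.
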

Observe that by the decomposability property in \eqref{eq:decompstrong} the following equivalent holds:
\begin{equation}\label{eq:ridualcertstrong}
w \in \ri(\partial \gamma_\calC(D^\top x)) \iff w_{\T_{D^\top z}} = \e{D^\top x} \qandq \sigma_\calC(w_{\S_{D^\top x}}) < 1 .
\end{equation}
In plain words, the denominator in Lemma~\ref{lem:nonsat} does not vanish and the statement is not vacuous. In fact, this denominator can be viewed as a ``distance'' to degeneracy.

\begin{proof} 
	We start by noting that $\ri(\partial R(x)) = D \ri(\partial\gamma_\calC(D^\top x))$. Let $v = D w$. We have by convexity and decomposability of the subdifferential of $\gamma_\calC$ that for any pair $(u,w) \in \partial \gamma_\calC(D^\top x) \times \ri\Ppa{\partial \gamma_\calC(D^\top x)}$,
	\begin{align*}
		\breg{R}{v}{z}{x}	&= \breg{\gamma_\calC}{w}{D^\top z}{D^\top x}\\
		&\geq \breg{\gamma_\calC}{w}{D^\top z}{D^\top x} - \breg{\gamma_\calC}{u}{D^\top z}{D^\top x} \\
		&= \pscal{u-w,D^\top z-D^\top x} \\
		&= \pscal{u_{\S_{D^\top z}}-w_{\S_{D^\top z}},D^\top z-D^\top x} .
	\end{align*}
	From \cite[Theorem~1]{luu_sharp_2020}, specialized to strong gauges, we have that for any $\omega \in \bbR^p$, $\exists \tilde{u} \in \partial \gamma_\calC(D^\top x)$ such that
	\[
	\gamma_\calC(\omega_{\S_{D^\top x}}) = \pscal{\tilde{u}_{\S_{D^\top x}},\omega_{\S_{D^\top x}}} .
	\]
	Applying this with $\omega=D^\top z-D^\top x$ and taking $u = \tilde{u}$, continuing the above chain of inequalities yields
	\begin{align*}
		\breg{R}{v}{z}{x}	
		&\geq \gamma_\calC(\proj_{\S_{D^\top x}}(D^\top z-D^\top x)) - \pscal{w_{\S_{D^\top x}},\proj_{\S_{D^\top x}}(D^\top z-D^\top x)} \\
		&\geq \gamma_\calC(\proj_{\S_{D^\top x}}(D^\top z-D^\top x))\Ppa{1-\sigma_\calC(w_{\S_{D^\top x}}} ,
	\end{align*}
	where in the last inequality, we used the duality inequality which holds by polarity between $\gamma_\calC$ and $\sigma_\calC$. 
	This concludes the proof.
\end{proof}

\begin{proof}[Proof of Theorem~\ref{thm:ratestablependet}]
\noindent (i) \quad Let $\xsol_{y,\lambda} \in \calS_{y,\lambda}$ and suppose that $\avx$ is its closest point in $\avX$. We have by optimality that
\begin{align*}
\normm{y-|A \xsol_{y,\lambda}|^2}^2 + \lambda R(\xsol_{y,\lambda}) \leq \sigma^2 + \lambda R(\avx) .
\end{align*}
 the source condition \eqref{eq:nondeg}, there exists $q \in \bbR^m$ such that $v \eqdef \BAx^\top q \in \ri(\partial R(\avx))$. Convexity of $R$ then implies
\begin{align*}
\normm{y-|A \xsol_{y,\lambda}|^2}^2 + \lambda \breg{R}{v}{\xsol_{y,\lambda}}{\avx} 
&\leq \sigma^2 -\lambda\pscal{q,\BAx(\xsol_{y,\lambda}-\avx)} \\
&= \sigma^2 +\frac{\lambda}{2}\pscal{q, |A\xsol_{y,\lambda}-A\avx|^2 + (|A\xsol_{y,\lambda}|^2 - |A\avx|^2)} \\
\text{\scriptsize{(Cauchy-Schwarz inequality)}} &\leq \sigma^2 +\frac{\lambda}{2}\Ppa{\normm{q}\norm{A\xsol_{y,\lambda}-A\avx}{4}^2 + \normm{q}\normm{|A\xsol_{y,\lambda}|^2 - |A\avx|^2}} \\
\text{\scriptsize{($\norm{\cdot}{4} \leq \normm{\cdot}$)}}		&\leq \sigma^2 +\frac{\lambda}{2}\normm{q}\Ppa{\normm{A\xsol_{y,\lambda}-A\avx}^2 + \normm{|A\xsol_{y,\lambda}|^2 - |A\avx|^2}} .
\end{align*}
Strong convexity of $\normm{\cdot}^2$ implies that
\[
\normm{y-|A \xsol_{y,\lambda}|^2}^2 - \sigma^2 \geq 2\pscal{\epsi,|A\xsol_{y,\lambda}|^2 - |A\avx|^2} +  \normm{|A\xsol_{y,\lambda}|^2 - |A\avx|^2}^2 .
\]
Thus
\begin{align*}
&\normm{|A\xsol_{y,\lambda}|^2 - |A\avx|^2}^2 + \lambda \breg{R}{v}{\xsol_{y,\lambda}}{\avx} \\
&\leq -2\pscal{\epsi,|A\xsol_{y,\lambda}|^2 - |A\avx|^2} + \frac{\lambda}{2}\normm{q}\Ppa{\normm{A\xsol_{y,\lambda}-A\avx}^2 + \normm{|A\xsol_{y,\lambda}|^2 - |A\avx|^2}} \\
\text{\scriptsize{(Cauchy-Schwarz inequality)}}		&\leq \frac{\lambda}{2}\normm{q}\normm{A\xsol_{y,\lambda}-A\avx}^2 + \Ppa{2\sigma+\frac{\lambda}{2}\normm{q}}\normm{|A\xsol_{y,\lambda}|^2 - |A\avx|^2} \\
\text{\scriptsize{(Young's inequality)}}		&\leq \frac{\lambda}{2}\normm{q}\normm{A}^2\normm{\xsol_{y,\lambda}-\avx}^2 + \frac{\Ppa{2\sigma+\frac{\lambda}{2}\normm{q}}^2}{2} + \frac{1}{2}\normm{|A\xsol_{y,\lambda}|^2 - |A\avx|^2}^2 ,
\end{align*}
 therefore
\begin{align*}
\normm{|A\xsol_{y,\lambda}|^2 - |A\avx|^2}^2 + 2\lambda \breg{R}{v}{\xsol_{y,\lambda}}{\avx}
&\leq \lambda\normm{q}\normm{A}^2\normm{\xsol_{y,\lambda}-\avx}^2 + \Ppa{2\sigma+\frac{\lambda}{2}\normm{q}}^2 .
\end{align*}
Using the choice  $\lambda=c\sigma$, non-negativity of the Bregman divergence of $R$, that $\sqrt{a+b}\leq\sqrt{a}+\sqrt{b}$ and Young's inequality, we get \eqref{eq:boundpredictionbregman}.

\noindent (ii) \quad By the triangle inequality and since $D$ is a Parseval tight frame, we get
\begin{align*}
\normm{\xsol_{y,\lambda}-\avx} 
&= \normm{DD^\top(\xsol_{y,\lambda}-\avx)} \\
&\leq \normm{D\proj_{\T_{D^\top\avx}}D^\top(\xsol_{y,\lambda}-\avx)} + \normm{\proj_{\S_{D^\top\avx}}D^\top(\xsol_{y,\lambda}-\avx)} .
\end{align*}
Denote $\Vx \eqdef \Im(D_{\T_{D^\top\avx}})$. In view of \eqref{eq:restinj}, we have ${\BAx}_{\Vx}^+=({\BAx}_{\Vx}^\top{\BAx}_{\Vx})^{-1}{\BAx}_{\Vx}^\top$. Moreover, since $v \in \ri(\partial R(\avx))$, we have from \cite[Theorem~6.6]{rockafellar_convex_1970} that $v=Dw$ for some $w \in \ri(\partial\gamma_\calC(D^\top \avx))$. Therefore, observing that $D\proj_{\T_{D^\top\avx}}D^\top(\xsol_{y,\lambda}-\avx) \in \Vx$ and using Lemma~\ref{lem:nonsat}, we obtain
\begin{align*}
&\normm{\xsol_{y,\lambda}-\avx} \\
&\leq \normm{{\BAx}_{\Vx}^+{\BAx}_{\Vx}D\proj_{\T_{D^\top\avx}}D^\top(\xsol_{y,\lambda}-\avx)} +  \normm{\proj_{\S_{D^\top\avx}}D^\top(\xsol_{y,\lambda}-\avx)} \\
&= \normm{{\BAx}_{\Vx}^+{\BAx}D\proj_{\T_{D^\top\avx}}D^\top(\xsol_{y,\lambda}-\avx)} +  \normm{\proj_{\S_{D^\top\avx}}D^\top(\xsol_{y,\lambda}-\avx)} \\
&= \normm{{\BAx}_{\Vx}^+{\BAx}\Ppa{\Id_n - D\proj_{\S_{D^\top\avx}}D^\top}(\xsol_{y,\lambda}-\avx)} +  \normm{\proj_{\S_{D^\top\avx}}D^\top(\xsol_{y,\lambda}-\avx)} \\
&\leq \normm{{\BAx}_{\Vx}^+}\normm{{\BAx}(\xsol_{y,\lambda}-\avx)} + \Ppa{\anormOP{\proj_{\S_{D^\top\avx}}}{\gamma_\calC}{2}+\normm{{\BAx}_{\Vx}^+}\norm{A\avx}{\infty}\anormOP{A D_{\S_{D^\top\avx}}}{\gamma_\calC}{2}}\gamma_\calC\Ppa{\proj_{\S_{D^\top\avx}}D^\top(\xsol_{y,\lambda}-\avx)} \\
&\leq \normm{{\BAx}_{\Vx}^+}\normm{{\BAx}(\xsol_{y,\lambda}-\avx)} + \Ppa{\anormOP{\proj_{\S_{D^\top\avx}}}{\gamma_\calC}{2}+\normm{{\BAx}_{\Vx}^+}\norm{A\avx}{\infty}\anormOP{A D_{\S_{D^\top\avx}}}{\gamma_\calC}{2}}\frac{\breg{R}{v}{x}{\avx}}{1-\sigma_\calC\Ppa{w_{\S_{D^\top\avx}}}} ,
\end{align*}
where we also used coercivity of $\gamma_\calC$. Let
\begin{align*}
\alpha &= \frac{\anormOP{\proj_{\S_{D^\top\avx}}}{\gamma_\calC}{2}+\normm{{\BAx}_{\Vx}^+}\norm{A\avx}{\infty}\anormOP{AD_{\S_{D^\top\avx}}}{\gamma_\calC}{2}}{1-\sigma_\calC\Ppa{w_{\S_{D^\top\avx}}}} .
	\end{align*}
	Thus
	\begin{align*}
\normm{\xsol_{y,\lambda}-\avx}
&\leq\normm{{\BAx}_{\Vx}^+}\normm{|A\xsol_{y,\lambda}-A\avx|^2 + (|A\xsol_{y,\lambda}|^2 - |A\avx|^2)} + \alpha\breg{R}{v}{x}{\avx} \\
&\leq\normm{{\BAx}_{\Vx}^+}\Ppa{\normm{A}^2\normm{\xsol_{y,\lambda}-\avx}^2 + \normm{|A\xsol_{y,\lambda}|^2 - |A\avx|^2}} + \alpha\breg{R}{v}{x}{\avx} .
	\end{align*}
	Inserting the bounds in \eqref{eq:boundpredictionbregman} and rearranging, we get
	\begin{align*}
\normm{\xsol_{y,\lambda}-\avx} \leq b \sigma + a \normm{\xsol_{y,\lambda}-\avx}^2 ,
	\end{align*}
	where
	\begin{align*}
a = \frac{\normm{A}^2}{2}\Pa{3\normm{{\BAx}_{\Vx}^+} + \alpha\normm{q}} \qandq
b = \normm{{\BAx}_{\Vx}^+}\Ppa{2+c\normm{q}} + \alpha \frac{\Ppa{2+\frac{c}{2}\normm{q}}^2}{2c} .
	\end{align*}
	By symmetry of $R$, it can be easily seen that $\partial R(-\avx) = -\partial R(\avx)$, and thus $\T_{-D^\top\avx} = \T_{D^\top\avx}$ and $\e{-D^\top\avx}=-\e{D^\top\avx}$. Therefore, if \eqref{eq:nondeg}-\eqref{eq:restinj} hold at $\avx$ then so they do at $-\avx$ and vice versa. In turn, when $-\avx$ is the closest point to $\xsol_{y,\lambda}$, we argue similarly as above to get
	\begin{align*}
\normm{\xsol_{y,\lambda}+\avx} \leq b \sigma + a \normm{\xsol_{y,\lambda}+\avx}^2 .
	\end{align*}
	Overall, we arrive at
	\begin{align*}
\dist(\xsol_{y,\lambda},\avX) \leq b \sigma + a \dist(\xsol_{y,\lambda},\avX)^2 .
	\end{align*}
	Solving the above inequality, recalling that $\dist(\xsol_{y,\lambda},\avX)$ vanishes as $\sigma \to 0$ thanks to Theorem~\ref{thm:stablerecoverypendet}, we get that if 
	\[
	\sigma \leq 1/(4ab) ,
	\]
	Then
	\[
	\dist(\xsol_{y,\lambda},\avX) \leq \frac{1-\sqrt{1-4ab\sigma}}{2a} \leq 2b\sigma .
	\]
\end{proof}

\subsection{Convergence rate for Gaussian measurement maps}\label{sec:errorbnd-gauss}
\subsubsection{Construction of a ``dual'' certificate}
The non-degenerate source condition \eqref{eq:nondeg} is a geometric condition, which is not easy to check in practice and exhibiting a valid vector $q$ is not trivial for general $A$. We will now describe a particular construction of a good candidate (the so-called linearized pre-certificate). Moreover, when $A$ is a Gaussian map, we will also provide sufficient bounds on $m$ needed for conditions \eqref{eq:nondeg}-\eqref{eq:restinj} to hold with overwhelming probability. 

In the sequel, $A$ is a Gaussian map with \iid entries sampled from $\calN(0,1/m)$. From now on, we will focus on the case where $D=\Id_n$ to avoid tedious and unnecessary computations. To lighten the notation, we denote $\T$ and $\e{}$ the model parameters of $R$ at $\avx$.


Following the same notation as in the proof of Theorem~\ref{thm:ratestablependet}, we define the vector
\[
w \eqdef \BAx^\top \argmin_{\BAx^\top q \in\Aff(\partial R(\avx))} \normm{q} .
\]
This amounts to forming $w$ by picking up a specific vector $q$: the one with minimal norm which is unique. If $\BAx$ is injective on $\T$ (which is equivalent to \eqref{eq:restinj} as $D=\Id_n$), it can be shown, using the definition of the model subspace $\T$, that $w$ can be equivalently expressed in closed form as (see \eg \cite{vaiter2015low})
\[
w=\BAx^\top q  \qwhereq q \eqdef {\BAx}_{\T}^{+,\top} \e{} \qandq {\BAx}_{\T}^+ = \Ppa{{\BAx}_{\T}^\top{\BAx}_{\T}}^{-1}{\BAx}_{\T}^\top .
\]
In view of \eqref{eq:ridualcertstrong}, verifying \eqref{eq:nondeg} amounts to ensuring that 
\[
\sigma_{\calC}(w_\S) < 1 .
\]
This is our goal in the rest of the paper where we will provide sample complexity bounds under which one can ensure that this holds with high probability for Gaussian maps.

Our approach is inspired by that of~\cite{candes2011simple}. The key ingredient is the fact that, owing to the isotropy of the Gaussian ensemble, the actions on $\T$ and $\S$ are independent. However, unlike the linear case, in the phase retrieval problem, there is a major issue since $\BAx$, hence the the multiplier $q$, depends on $A\avx$ and there on $\T$ and $\S$. This difficulty can alleviated if the regularizer is such that $\avx \in \T$. This turns out that many important cases including the Lasso, group Lasso penalties, their ordered weighted versions \cite{CandesOWL15}, and many others.

From now on, we assume that $\avx \in \T$. We can then write
\[
\BAx = \diag{|A_\T\avx|}A,
\]
and thus
\[
w=A^\top \eta \qwhereq \eta \eqdef \diag{|A_\T\avx|^2}A_\T\Ppa{A_{\T}^\top\diag{|A_\T\avx|^2}A_{\T}}^{-1}e.
\]
Clearly, isotropy of the Gaussian ensemble entails that $\eta$ and $A_\S$ are independent, which allows us, given the value of $\eta$, to infer the distribution of $A_\S^\top\eta$ with no knowledge of the values of $A_\T$.
Thus, for some $\tau > 0$ and $\kappa \leq 1$, we need to bound 
\begin{equation}\label{eq:probnondeg}
\Pr\Ppa{\sigma_\calC(w_\S) \geq \kappa} = \Pr\Ppa{\sigma_\calC(A_\S^\top\eta) \geq \kappa} \leq \Pr\Ppa{\sigma_\calC(w_\S) \geq \kappa \Big\vert \normm{\eta} \leq \tau} + \Pr\Ppa{\normm{\eta} \geq \tau} .
\end{equation}
The first term in this inequality will be bounded on a case-by-case basis (see the following sections) and uses the fact that conditionally on $\eta$, the entries of $w = A^\top\eta$ are $\iid$ $\calN(0,\normm{\eta}^2/m)$. \\

Let us first consider the second term. We have the following.
\begin{lemma}\label{lem:conc-eta}{\ } 
	Let $\vrho \in ]0,1[$. If $m\geq C(\vrho)\dim(\T)\log(m)$, on the same event we have that $\BAx$ is injective on $\T$, \ie \eqref{eq:restinj} holds, that
	\begin{equation}\label{eq:bnd-norm-eta}
	\normm{\eta}<\frac{1+\delta}{1-\vrho}\normm{e}, 
	\end{equation}
	and that
	\begin{equation}\label{eq:bnd-norm-q}
	\normm{q}<\frac{\normm{\e{}}}{1-\vrho}\frac{\sqrt{m}}{\normm{\avx}},    
	\end{equation}
	with a probability at least $1-\frac{6}{m^2}-e^{-\delta^2/2}$.
\end{lemma}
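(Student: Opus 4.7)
Everything hinges on the $d \times d$ matrix $M = \BAx_\T^\top \BAx_\T = A_\T^\top \diag{(A_\T\avx)^2} A_\T$, with $d \eqdef \dim(\T)$. I would write $M$ as a sum of independent rank-one PSD matrices, $M = \sum_{i=1}^m \pscal{z_i,\avx}^2 z_i z_i^\top$, where $z_i \sim \calN(0, \Id_\T/m)$ are the rows of $A_\T$, and compute $\esp{M} = m^{-1}(\normm{\avx}^2\Id_\T + 2\avx\avx^\top)$ by an Isserlis/Wick calculation of Gaussian fourth moments; its smallest eigenvalue on $\T$ is $\normm{\avx}^2/m$ (attained on $\avx^\perp \cap \T$ when $d \geq 2$).

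The first step would be a matrix Bernstein bound for $M - \esp{M}$. The summands $X_i = \pscal{z_i,\avx}^2 z_i z_i^\top$ are unbounded but have sub-exponential operator norm (products of two $\chi^2$-type scalars), so a sub-exponential matrix Bernstein inequality in the vein of \cite{tropp_user-friendly_2012} will give, provided $m\geq C(\vrho)\,d\log m$ with $C(\vrho)$ sufficiently large, the two-sided spectral estimate
\[
(1-\vrho)\esp{M}\preceq M\preceq (1+\vrho)\esp{M}
\]
with probability at least $1-6/m^2$. On this event $M$ is invertible on $\T$, so $\BAx_\T$ is injective on $\T$, which is precisely \eqref{eq:restinj} in the current $D = \Id_n$ setting; moreover $\lambda_{\min}(M)\geq (1-\vrho)\normm{\avx}^2/m$. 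Bound \eqref{eq:bnd-norm-q} is then immediate from the identity $q = \BAx_\T M^{-1}\e{}$, which gives $\normm{q}^2 = \e{}^\top M^{-1}\BAx_\T^\top \BAx_\T M^{-1}\e{} = \e{}^\top M^{-1}\e{} \leq \normm{\e{}}^2/\lambda_{\min}(M)$, and the stated form follows using the crude $1/\sqrt{1-\vrho}<1/(1-\vrho)$.

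For \eqref{eq:bnd-norm-eta}, I would exploit the identity $\eta = \diag{A_\T\avx}\,q$, so $\normm{\eta}^2 = \sum_i (A_\T\avx)_i^2 q_i^2$, and separate the randomness in $A_\T$ into two stages: first restrict to the $M$-good event; then, on this event, the map $A \mapsto \normm{\eta(A)}$ is Lipschitz with Lipschitz constant $K$ controlled by $\normm{\BAx_\T}$, $\normm{M^{-1}}$, and $\normm{\avx}$. Applying Proposition~\ref{pro:gauss-space} to this Lipschitz function will produce the one-sided tail $\Pr(\normm{\eta}>\esp{\normm{\eta}}+t) \leq e^{-t^2/(2K^2/m)}$, from which the $e^{-\delta^2/2}$ factor arises by choosing $t \asymp K\delta/\sqrt{m}$. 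The expectation $\esp{\normm{\eta}}$ in turn would be controlled using the identity $\normm{\eta}^2 = \e{}^\top M^{-1} N M^{-1} \e{}$ with $N = A_\T^\top \diag{(A_\T\avx)^4}A_\T$, the PSD inequality $M^{-1} \preceq (1-\vrho)^{-1}\esp{M}^{-1}$, and a direct moment computation $\esp{N}=m^{-2}(3\normm{\avx}^4\Id_\T + 12\normm{\avx}^2 \avx\avx^\top)$ through Sherman--Morrison.

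The main obstacle will be matching the sharp multiplicative constants in \eqref{eq:bnd-norm-eta}: an Isserlis-based computation suggests $\esp{\normm{\eta}^2}$ has a leading constant larger than $1$ in the worst alignment of $\e{}$ with $\avx$, so either a refined moment bound or absorbing an absolute constant into $\delta$ will be needed. The other technical point is the sub-exponential matrix Bernstein estimate with constants sharp enough to yield probability at least $1-6/m^2$ under the linear-log sample complexity $m \gtrsim d\log m$.
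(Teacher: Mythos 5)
Your handling of \eqref{eq:restinj} and \eqref{eq:bnd-norm-q} is essentially the paper's argument: everything reduces to a lower bound on $\lambda_{\min}\bPa{A_{\T}^\top\diag{|A_\T\avx|^2}A_{\T}}$ of order $(1-\vrho)\normm{\avx}^2/m$ on an event of probability $1-6/m^2$ under $m\gtrsim\dim(\T)\log m$, after which $\normm{q}\leq\normm{{\BAx}_{\T}^{+}}\normm{\e{}}=\lambda_{\min}(M)^{-1/2}\normm{\e{}}$ gives \eqref{eq:bnd-norm-q}. (The paper obtains the spectral estimate not by matrix Bernstein but by a bespoke scalar decomposition with explicit truncations such as $\max_r|\bar a_r[1]|\leq\sqrt{10\log m}$ plus a net argument, precisely because the summands $\pscal{z_i,\avx}^2 z_i z_i^\top$ are heavier-tailed than sub-exponential; your plan glosses over this, but it is a known and fixable technicality.)

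The genuine gap is in your route to \eqref{eq:bnd-norm-eta}. The paper never touches $\esp{\normm{\eta}}$ or the quadratic form $\e{}^\top M^{-1}NM^{-1}\e{}$: it uses the factorization $\eta=\diag{|A_\T\avx|}{\BAx}_{\T}^{+,\top}\e{}$ and the one-line submultiplicative bound
\[
\normm{\eta}\;\leq\;\norm{A_\T\avx}{\infty}\,\normm{{\BAx}_{\T}^{+}}\,\normm{\e{}},
\]
so that the two factors are controlled by exactly the two events already in play --- the $\lambda_{\min}$ event (probability $1-6/m^2$) and the event $\norm{A_\T\avx}{\infty}\leq\frac{1+\delta}{\sqrt m}\normm{\avx}$ from Lemma~\ref{lem:concent-inj} (probability $1-e^{-\delta^2/2}$). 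This is what produces both the constant $\frac{1+\delta}{1-\vrho}$ and the additive structure of the failure probability. Your alternative introduces a third concentration step, applying Proposition~\ref{pro:gauss-space} to $A\mapsto\normm{\eta(A)}$; but this map is not globally Lipschitz (it involves $M^{-1}$, which blows up off the good event), and Gaussian concentration for Lipschitz functions does not survive conditioning on an event without an explicit Lipschitz-extension or localization argument that you do not supply. Worse, as you yourself observe, the mean you would be concentrating around satisfies $\esp{\normm{\eta}^2}\approx\e{}^\top\esp{M}^{-1}\esp{N}\esp{M}^{-1}\e{}\geq 3\normm{\e{}}^2$ even when $\e{}\perp\avx$, so a ``mean plus fluctuation'' bound cannot yield $\normm{\eta}<\frac{1+\delta}{1-\vrho}\normm{\e{}}$ for small $\delta$ and $\vrho$. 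So the step would fail as planned; the fix is to abandon the exact quadratic-form computation and use the crude $\ell_\infty$/operator-norm splitting above.
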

We refer to Section~\ref{pr:lem:conc-eta} for the proof. The last term of the probability can be made decreasing with $m$ with a stringent upper-bounds on $\eta$ and $q$. This is precisely the aim of the second statement of Lemma~\ref{lem:concent-inj}.

\subsubsection{Bound for a symmetric strong gauge of a polytope} 
We now consider $R$ to be symmetric strong gauge of a polytope, \ie  $R = \gamma_{\calC}$ where $\calC$ is a polytope containing the origin as an interior point and $\e{\avx} \in \ri(\partial R(\avx))$ (see Definition~\ref{def:Rstrong}).
We use the shorthand notation $\calV_\S$ for the set of vertices of $\proj_{S}\calC$.
\begin{lemma}\label{lem:errbndgage}
	Let $\kappa \in]0,1[$, $\vrho > 0$ small enough and $\delta > 0$. Let us define $\beta\eqdef\frac{1+\delta}{1-\vrho}\frac{\normm{e}\max_{v\in\calV_\S}\normm{v}}{\kappa}$. Assume that the number of samples $m$ is such that 
	\[
	m\geq\max\Ppa{C(\vrho)\dim(\T)\log(m),2\beta^2(1+\zeta)\log(\abs{\calV_\S})}
	\] 
	for some $\zeta>0$. Then with probability at least $1-\abs{\calV_\S}^{-\zeta}-\frac{6}{m^2}-e^{-\frac{\delta^2}{2}}$, \eqref{eq:nondeg} and \eqref{eq:restinj} hold, and in particular
	\[
	\sigma_\calC(w_\S)<\kappa .
	\]
\end{lemma}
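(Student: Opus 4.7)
The plan is to instantiate the general splitting in \eqref{eq:probnondeg} for the polytope case, choosing the threshold $\tau \eqdef \frac{(1+\delta)\normm{e}}{1-\vrho}$. Under the first budget $m \geq C(\vrho)\dim(\T)\log(m)$, Lemma~\ref{lem:conc-eta} immediately delivers two things: restricted injectivity \eqref{eq:restinj} holds, and the deviation term $\Pr(\normm{\eta} \geq \tau)$ is controlled by $6/m^2 + e^{-\delta^2/2}$. So the whole remaining work is to bound the conditional probability $\Pr\Ppa{\sigma_\calC(w_\S) \geq \kappa \mid \normm{\eta} \leq \tau}$ by $\abs{\calV_\S}^{-\zeta}$ using the second budget $m \geq 2\beta^2(1+\zeta)\log\abs{\calV_\S}$.

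For this conditional step, I would first exploit the polytope structure: since $\sigma_{\calC}(u) = \sigma_{\proj_\S \calC}(u)$ whenever $u \in \S$, and $\proj_\S \calC$ is a polytope with vertex set $\calV_\S$, we have the identity
\[
\sigma_\calC(w_\S) = \max_{v \in \calV_\S} \pscal{v, w_\S} = \max_{v \in \calV_\S} \pscal{v, A^\top \eta},
\]
where the last equality uses $v \in \S$ and the definition $w = A^\top \eta$. The crucial point, which relies on the standing assumption $\avx \in \T$, is that $\eta$ is a function of $A_\T$ only and so is independent of $A_\S$. Hence, conditionally on the $\sigma$-algebra generated by $A_\T$ (and in particular on $\eta$), each scalar $\pscal{Av,\eta} = \pscal{v, A^\top \eta}$ for $v \in \calV_\S \subset \S$ is a centred Gaussian of variance $\normm{v}^2\normm{\eta}^2/m$.

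Next, I would apply the standard one-sided Gaussian tail bound on this event $\ens{\normm{\eta}\leq \tau}$ to get
\[
\Pr\Ppa{\pscal{v,w_\S} \geq \kappa \,\Big\vert\, \normm{\eta}\leq \tau}
\leq \exp\Ppa{-\frac{m\kappa^2}{2\normm{v}^2\tau^2}},
\]
then a union bound over the finitely many vertices $v \in \calV_\S$ and the bound $\normm{v} \leq \max_{v\in\calV_\S}\normm{v}$, which combine to yield
\[
\Pr\Ppa{\sigma_\calC(w_\S) \geq \kappa \,\Big\vert\, \normm{\eta}\leq \tau}
\leq \abs{\calV_\S}\exp\Ppa{-\frac{m}{2\beta^2}},
\]
after substituting $\tau$ and recognizing $\beta^2 = \frac{(1+\delta)^2\normm{e}^2(\max_{v\in\calV_\S}\normm{v})^2}{(1-\vrho)^2\kappa^2}$. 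The second sample complexity assumption $m \geq 2\beta^2(1+\zeta)\log\abs{\calV_\S}$ makes the right-hand side at most $\abs{\calV_\S}^{-\zeta}$. Summing the two probability contributions from the splitting gives the announced failure probability. Since $\sigma_\calC(w_\S) < \kappa \leq 1$ together with $w_\T = \e{}$ (coming directly from the construction of $w$ and the decomposability \eqref{eq:ridualcertstrong}) certifies \eqref{eq:nondeg}, the proof is complete.

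The main subtlety I anticipate is the independence argument that makes the conditional Gaussian bound valid: it really uses $\avx \in \T$, hence the explicit form $\BAx = \diag{\abs{A_\T \avx}}A$, so that $\eta$ is measurable with respect to $A_\T$ only and $A_\S$ remains an independent Gaussian block. Everything else reduces to a careful bookkeeping of constants so that the final exponent matches $\beta^2$ exactly.
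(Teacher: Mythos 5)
Your proposal is correct and follows essentially the same route as the paper's proof: the splitting \eqref{eq:probnondeg} with $\tau=\frac{1+\delta}{1-\vrho}\normm{e}$, Lemma~\ref{lem:conc-eta} for the deviation term and \eqref{eq:restinj}, reduction of the support function to a maximum over the vertices $\calV_\S$, and a union bound with a Gaussian tail for each vertex. The only cosmetic difference is that you invoke the one-sided Gaussian tail for the conditionally centred Gaussian $\pscal{v,A^\top\eta}$ directly, whereas the paper obtains the identical bound via Lipschitz concentration (Proposition~\ref{pro:gauss-space}) applied to $Z\mapsto\pscal{Z\eta,v}$.
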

The proof is in Section~\ref{pr:errbndgage}.\\

Let us now instantiate Lemma~\ref{lem:errbndgage} and Theorem~\ref{thm:ratestablependet} for some popular regularizers.

\paragraph{Example: Lasso}
Denote $I=\supp(\avx)$. For the $\ell_1$-norm, we have $\sigma_\calC = \norm{\cdot}{\infty}$ and $\normm{e}^2=\dim(\T)=s=|I|$. Moreover, $\calV$ is formed by $(\pm a_i)_{i \in [n]}$, where $a_i$ are the vectors of the standard basis. Thus 
\[
\abs{\calV_{\S }}=2(n-s)\leq 2n, \quad \max_{v\in\calV_\S}\normm{v}=1 \qandq \beta=\frac{1+\delta}{1-\vrho}\frac{\sqrt{s}}{\kappa}.
\] 
According to Lemma~\ref{lem:errbndgage}, taking 
\begin{equation}\label{eq:mbndl1convrate}
m\geq\max\Ppa{C(\vrho)s\log(m),2\kappa^{-2}\Ppa{\frac{1+\delta}{1-\vrho}}^2(1+\zeta)s\log(2n)}
\end{equation}
we have
\[
\text{${\BAx}_I$ is injective}  \qandq \norm{w_\S}{\infty} < \kappa 
\]
with probability at least $1-(2n)^{-\zeta}-\frac{6}{m^2}-e^{-\frac{\delta^2}{2}}$. Thus $m \gtrsim s\log(n)$ measurements are sufficient for \eqref{eq:nondeg} and \eqref{eq:restinj} to hold when $\avx$ is $s$-sparse. In turn, the error bound Theorem~\ref{thm:ratestablependet} holds. We need to estimate the constant in that bound. According to the proof of Theorem~\ref{thm:ratestablependet}, this constant is $2b$ where
\[
b = \normm{{\BAx}_{T}^+}\Ppa{2+c\normm{q} } + \alpha \frac{\Ppa{2+\frac{c}{2}\normm{q}}^2}{2c}
\qwithq
\alpha = \frac{\anormOP{\proj_{\S}}{1}{2}+\normm{{\BAx}_{T}^+}\norm{A\avx}{\infty}\anormOP{A_{\S}}{1}{2}}{1-\sigma_\calC\Ppa{w_{\S}}} .
\]
We have just shown that
\[
\frac{1}{1-\norm{w_{\S}}{\infty}} \leq \frac{1}{1-\kappa}
\]
with high probability when $m$ verifies the bound \eqref{eq:mbndl1convrate}. On this same event, we also have (see Lemma~\ref{lem:concent-inj}, Lemma~\ref{lem:conc-eta} and its proof),
\begin{equation}\label{eq:ABql1}
\norm{A_\T\avx}{\infty} \leq \frac{1+\delta}{\sqrt{m}}\normm{\avx}, \quad \normm{{\BAx}_{{\T}}^+} \leq \frac{\sqrt{m}}{(1-\vrho)\normm{\avx}} \qandq \normm{q}<\frac{\sqrt{s m}}{(1-\vrho)\normm{\avx}} .
\end{equation}
To complete our analysis, we need to bound $\anormOP{\proj_{\S}}{1}{2}$ and $\anormOP{A_{\S}}{1}{2}$. 

Obviously
\[
\anormOP{\proj_{\S}}{1}{2} = \sup_{x} \frac{\normm{x_{I^c}}}{\norm{x_{I^c}}{1}} = 1 .
\]
In addition, we have for any $x \in \bbR^n$
\[
\frac{\normm{\sum_{i \in I^c}A_i x[i]}}{\norm{x_{I^c}}{1}} \leq \max_{i \in I^c} \normm{A_i} \frac{\sum_{i \in I^c} |x[i]|}{\norm{x_{I^c}}{1}} = \max_{i \in I^c} \normm{A_i} ,
\]
whence we get the upper bound
\[
\anormOP{A_{\S}}{1}{2} \leq \max_{i \in I^c} \normm{A_i} .
\]
By a union bound and Proposition~\ref{pro:gauss-space}, it is immediate to show that
\[
\max_{i \in I^c} \normm{A_i} \leq 1 + \sqrt{\frac{2t\log(n)}{m}}, \text{ for some } t > 1 ,
\]
with probability at least $1-n^{1-t}$. 

We now proceed to state our estimation error bound for the Lasso. Replacing all the terms in $b$ by their estimates, and plugging into Theorem~\ref{thm:ratestablependet}, we get the following.
\begin{proposition}\label{pro:convrate-l1} 
Let $\kappa \in]0,1[$, $\vrho > 0$ small enough, $\delta > 0$ and $t > 1$. Consider the noisy phaseless measurements in \eqref{eq:GeneralPR} where $\avx$ is $s$-sparse and $\sigma \eqdef \normm{\epsi}$ is small enough. Consider problem \eqref{eq:eqminregulPRpen} where $R$ is the $\ell_1$-norm and $\lambda = c\sigma$ for $c > 0$.
If $A$ is a Gaussian map with \iid entries from $\calN(0,1/m)$ with $m$ verifying \eqref{eq:mbndl1convrate}, then with probability at least $1-n^{1-t}-(2n)^{-\zeta}-\frac{6}{m^2}-e^{-\frac{\delta^2}{2}}$, any minimizer $\xsol_{y,\lambda}$ of \eqref{eq:eqminregulPRpen} satisfies
\begin{multline}\label{eq:bnd-dist-l1}
\dist(\xsol_{y,\lambda},\avX) \leq \Bigg(\frac{2\sqrt{m}}{(1-\vrho)\normm{\avx}}\Ppa{2+c\frac{\sqrt{sm}}{(1-\vrho)\normm{\avx}}} \\
+ \frac{1+\frac{1+\delta}{1-\vrho}\Ppa{1+\sqrt{\frac{2t\log(n)}{m}}}}{1-\kappa}\frac{\Ppa{2+\frac{c}{2}\frac{\sqrt{sm}}{(1-\vrho)\normm{\avx}}}^2}{2c}\Bigg) \sigma .
\end{multline}
\end{proposition}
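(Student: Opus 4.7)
The proof is essentially a specialization of Theorem~\ref{thm:ratestablependet}(ii) to the $\ell_1$ case, combined with the concrete estimates derived immediately above the statement. My plan is to (a) establish validity of \eqref{eq:nondeg}--\eqref{eq:restinj} with high probability using Lemma~\ref{lem:errbndgage}, (b) invoke Theorem~\ref{thm:ratestablependet}(ii) with the operator/norm quantities replaced by their explicit Lasso values, and (c) collect the probability estimates via a union bound.

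First, I would verify the geometric constants of the $\ell_1$ ball entering Lemma~\ref{lem:errbndgage}. For $R=\norm{\cdot}{1}$ one has $\sigma_\calC=\norm{\cdot}{\infty}$, $\dim(\T)=s=|I|$, $\normm{\e{}}^2=s$, and $\calV_\S=\{\pm a_i\}_{i\in I^c}$, hence $|\calV_\S|=2(n-s)\leq 2n$ and $\max_{v\in\calV_\S}\normm{v}=1$, giving $\beta=\tfrac{1+\delta}{1-\vrho}\tfrac{\sqrt{s}}{\kappa}$. Substituting in Lemma~\ref{lem:errbndgage}, the sample complexity requirement becomes exactly \eqref{eq:mbndl1convrate} and the event on which \eqref{eq:restinj} holds with $\sigma_\calC(w_\S)<\kappa$ has probability at least $1-(2n)^{-\zeta}-6/m^2-e^{-\delta^2/2}$, which is also the event on which the high-probability estimates in \eqref{eq:ABql1} simultaneously hold (this is precisely how Lemma~\ref{lem:concent-inj}/Lemma~\ref{lem:conc-eta} are packaged). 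On this event, Theorem~\ref{thm:ratestablependet}(ii) applies and yields $\dist(\xsol_{y,\lambda},\avX)\leq 2b\sigma$ for $\sigma$ small enough, where $b$ and $\alpha$ are those spelled out in its proof.

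Second, I would estimate each factor in $b$. The terms $\normm{{\BAx}_\T^+}\leq \sqrt{m}/((1-\vrho)\normm{\avx})$ and $\normm{q}\leq \sqrt{sm}/((1-\vrho)\normm{\avx})$ come directly from \eqref{eq:ABql1}. Because $\avx\in\T$ for the $\ell_1$ model, $\norm{A\avx}{\infty}=\norm{A_\T\avx}{\infty}\leq (1+\delta)\normm{\avx}/\sqrt{m}$, still from \eqref{eq:ABql1}. The two operator gauges are handled by direct arguments: $\anormOP{\proj_\S}{1}{2}=\sup_x\normm{x_{I^c}}/\norm{x_{I^c}}{1}=1$, and $\anormOP{A_\S}{1}{2}\leq \max_{i\in I^c}\normm{A_i}$ by the triangle inequality (each column $A_i$ being unit-norm in expectation). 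A Gaussian concentration argument (Proposition~\ref{pro:gauss-space}) plus a union bound over $i\in I^c$ yields $\max_{i\in I^c}\normm{A_i}\leq 1+\sqrt{2t\log(n)/m}$ with probability at least $1-n^{1-t}$. Together with $1-\sigma_\calC(w_\S)\geq 1-\kappa$, these assemble into the upper bound on $\alpha$
\[
\alpha \;\leq\; \frac{1+\frac{1+\delta}{1-\vrho}\bigl(1+\sqrt{2t\log(n)/m}\bigr)}{1-\kappa}.
\]

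Third, I would substitute these into the expression for $b$ from the proof of Theorem~\ref{thm:ratestablependet} and read off \eqref{eq:bnd-dist-l1}, provided $\sigma$ is small enough to enforce the threshold $\sigma\leq 1/(4ab)$ demanded in that theorem. The failure probability is then the union of the failures of Lemma~\ref{lem:errbndgage} (bounding $\sigma_\calC(w_\S)$, $\normm{{\BAx}_\T^+}$ and $\normm{q}$) and the extra Gaussian event used for $\max_{i\in I^c}\normm{A_i}$, giving the claimed $1-n^{1-t}-(2n)^{-\zeta}-6/m^2-e^{-\delta^2/2}$. I expect no conceptual obstacle here; the only mildly delicate points are checking that $\avx\in\T$ (which is immediate as $\T=\Span\{a_i\}_{i\in I}$ and $\supp(\avx)=I$) so that the independence trick behind $\eta$ applies, and being careful that all probabilistic events are consistent so that the bounds in \eqref{eq:ABql1} and the non-degeneracy bound on $\sigma_\calC(w_\S)$ hold simultaneously with $\max_{i\in I^c}\normm{A_i}$ under control.
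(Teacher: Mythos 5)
Your proposal is correct and follows essentially the same route as the paper: the paper's argument is precisely the specialization of Lemma~\ref{lem:errbndgage} to the $\ell_1$ geometry (same $\beta$, same vertex count), followed by plugging the estimates of \eqref{eq:ABql1}, $\anormOP{\proj_{\S}}{1}{2}=1$ and $\anormOP{A_{\S}}{1}{2}\leq \max_{i\in I^c}\normm{A_i}\leq 1+\sqrt{2t\log(n)/m}$ into the constant $2b$ from the proof of Theorem~\ref{thm:ratestablependet}, with the same union bound over the probabilistic events. Your added remarks on checking $\avx\in\T$ and on the consistency of the events are accurate and match the paper's implicit bookkeeping.
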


The performance guarantee \eqref{eq:bnd-dist-l1} has an alternative form in terms of the signal-to-noise ratio (SNR). The latter is captured for the model \eqref{eq:GeneralPR} by
\[
\SNR \eqdef \frac{\norm{A\avx}{4}^4}{\normm{\epsi}^2} \approx \frac{3\normm{\avx}^4}{m\sigma^2} ,
\]
where we used that $A$ has \iid $\calN(0,1/m)$ entries. We then have the bound
\begin{equation*}
\dist(\xsol_{y,\lambda},\avX) \lesssim \frac{\normm{\avx}}{\sqrt{\SNR}}\Ppa{\Ppa{1+\frac{\sqrt{s}}{m^{-1/2}\normm{\avx}}}
+ \Ppa{1+\Ppa{1+\frac{\sqrt{s}}{m^{-1/2}\normm{\avx}}}^2}m^{-1/2}\normm{\avx}}
\end{equation*}
revealing the stability of minimizers of \eqref{eq:eqminregulPRpen} as a function of SNR.

\subsubsection{Bound for the group Lasso}
Recall the group Lasso penalty (Section~\ref{sec:recoverynoiselessgaussiandecomp}) with $L$ blocks of equal size $B$. Let $I=\bsupp(\avx)$. For the $\ell_1-\ell_2$-norm, we have $\sigma_\calC(x) = \max_{i \in I}\normm{x[b_i]}$ and, $\normm{e}^2=s$ where $s=|I|$ is the number of active blocks in $\avx$. Let $\Lambda=\bigcup_{i \in I} b_i$, and $\Lambda^c$ its complement. We then have $\dim(\T)=|\Lambda|=sB$.

\begin{lemma}\label{lem:errbndl1l2}
Let $\kappa \in]0,1[$, $\vrho > 0$ small enough and $\delta > 0$. Assume that the number of samples $m$ is such that 
\begin{equation}\label{eq:mbndl1l2convrate}
m\geq\max\Ppa{C(\vrho)sB\log(m),2\kappa^{-2}(1+\zeta)\frac{1+\delta}{1-\vrho}s\Ppa{\sqrt{\log(L)}+\sqrt{B}}^2}
\end{equation}
form some $\zeta>0$. Then with probability at least $1-L^{-\zeta}-\frac{6}{m^2}-e^{-\frac{\delta^2}{2}}$, \eqref{eq:nondeg} and \eqref{eq:restinj} hold, and in particular
\[
\max_{i \in I}\normm{w[b_i]} < \kappa .
\]
\end{lemma}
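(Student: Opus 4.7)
The plan is to follow the template of \eqref{eq:probnondeg}: decompose the failure probability according to the event that controls $\normm{\eta}$, handle the latter via Lemma~\ref{lem:conc-eta}, and then condition on $\eta$ so as to exploit the Gaussian isotropy that decouples $A_\T$ from $A_\S$. A key preliminary observation is that $\avx \in \T$ for the group Lasso (since $\avx$ is supported on the active blocks $\Lambda$), so that the identity $\BAx = \diag{|A_\T\avx|}A$ is valid and $\eta$ depends on $A$ only through $A_\T$; isotropy then makes $\eta$ independent of $A_\S$.

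Invoking Lemma~\ref{lem:conc-eta} with $\dim(\T) = sB$ and $\normm{\e{}}^2 = s$, the first piece $m \geq C(\vrho) sB\log m$ of the sample complexity yields both \eqref{eq:restinj} and the event
\[
\calE_\eta \eqdef \ens{\normm{\eta} \leq \tau}, \qquad \tau \eqdef \tfrac{(1+\delta)\sqrt{s}}{1-\vrho},
\]
with probability at least $1 - 6/m^2 - e^{-\delta^2/2}$. Next I would condition on $A_\T$ (equivalently on $\eta$): for every inactive block $i \in I^c$, the vector $w[b_i] = A_{b_i}^\top\eta$ is then Gaussian with covariance $\tfrac{\normm{\eta}^2}{m}\Id_B$ and these $L - s$ vectors are mutually independent across $i \in I^c$. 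Writing $w[b_i] = \tfrac{\normm{\eta}}{\sqrt{m}} g_i$ with $g_i \sim \calN(0,\Id_B)$, the estimate $\esp{\normm{g_i}} \leq \sqrt{B}$ together with the $1$-Lipschitz continuity of $\normm{\cdot}$ allows me to apply Proposition~\ref{pro:gauss-space} to obtain $\Pr(\normm{g_i} \geq \sqrt{B} + r) \leq e^{-r^2/2}$. A union bound over $i \in I^c$ then yields, on $\calE_\eta$,
\[
\Pr\Ppa{\max_{i \in I^c}\normm{w[b_i]} \geq \kappa \,\Big\vert\, \eta} \leq L\exp\Ppa{-\tfrac{1}{2}\Ppa{\tfrac{\kappa\sqrt{m}}{\tau} - \sqrt{B}}^2} .
\]
Forcing this quantity to be at most $L^{-\zeta}$ is equivalent to requiring $\tfrac{\kappa\sqrt{m}}{\tau} \geq \sqrt{B} + \sqrt{2(1+\zeta)\log L}$; substituting the value of $\tau$, squaring, and using $(\sqrt{a}+\sqrt{b})^2 \leq 2(a+b)$ recovers (up to the stated constants) the second piece of the sample complexity bound \eqref{eq:mbndl1l2convrate}.

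A final union bound then combines the two failure events and yields the claimed overall probability $1 - L^{-\zeta} - 6/m^2 - e^{-\delta^2/2}$. Condition \eqref{eq:nondeg} follows because $w = \BAx^\top q$ has $w_\T = \e{}$ by construction of $q$, and we just proved $\sigma_\calC(w_\S) < \kappa \leq 1$, which matches the relative interior description recorded in \eqref{eq:ridualcertstrong}. The main obstacle is the conditional Gaussian step: the whole argument hinges on the decoupling of $\eta$ and $A_\S$, which in turn is only possible because $\avx \in \T$, a property specific to decomposable regularizers like the Lasso and the group Lasso. Beyond that, the analysis is essentially bookkeeping that mirrors the Lasso case, with the novelty that the blockwise Gaussian concentration on $\bbR^B$ replaces the scalar concentration previously used for $\norm{\cdot}{\infty}$, which is what introduces the $\sqrt{B}$ term alongside $\sqrt{\log L}$ in the bound.
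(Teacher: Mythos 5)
Your proposal is correct and follows essentially the same route as the paper: condition on the event $\normm{\eta}\leq\tau$ furnished by Lemma~\ref{lem:conc-eta} with $\tau=\tfrac{1+\delta}{1-\vrho}\sqrt{s}$, use the fact that conditionally on $\eta$ each $w[b_i]=A_{b_i}^\top\eta$ is Gaussian so that $\tfrac{\sqrt{m}}{\normm{\eta}}\normm{w[b_i]}$ concentrates around $\sqrt{B}$ with sub-Gaussian tail $e^{-t^2/2}$, take a union bound over the $L-s\leq L$ inactive blocks, and combine via \eqref{eq:probnondeg}. The only cosmetic difference is that you phrase the blockwise concentration through Proposition~\ref{pro:gauss-space} directly while the paper cites $\chi^2$ concentration; these are the same estimate.
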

The proof is in Section~\ref{pr:errbndl1l2}.\\

Thus $m \gtrsim s\Ppa{\Ppa{\sqrt{\log(L)}+\sqrt{B}}^2+B\log(m)}$ measurements are sufficient for \eqref{eq:nondeg} and \eqref{eq:restinj} to hold for the group Lasso when $\avx$ is $s$-group sparse. Therefore, the error bound in Theorem~\ref{thm:ratestablependet} holds. We now estimate the corresponding constant $2b$ as above.
By the triangle inequality, we can upper-bound
\[
\frac{\normm{x[\Lambda^c]}}{\norm{x[\Lambda^c]}{1,2}} \leq  \frac{\sum_{i \in I^c}\normm{x[b_i]}}{\sum_{i \in I^c}\normm{x[b_i]}} = 1,
\]
and thus
\[
\anormOP{\proj_{\S}}{(1,2)}{2} = \sup_{x} \frac{\normm{x[\Lambda^c]}}{\norm{x[\Lambda^c]}{1,2}} = 1.
\]
Moreover, for any $x \in \bbR^n$
\[
\frac{\normm{\sum_{i \in I^c}A_{b_i}x[b_i]}}{\sum_{i \in I^c}\normm{x[b_i]}} \leq \frac{\sum_{i \in I^c} \normm{A_{b_i}}\normm{x[b_i]}}{\sum_{i \in I^c}\normm{x[b_i]}} \leq \max_{i \in I^c} \normm{A_{b_i}} .
\]
This yields
\[
\anormOP{A_{\S}}{1,2}{2} = \anormOP{A_{\Lambda^c}}{(1,2)}{2}  \leq  \max_{i \in I^c} \normm{A_{b_i}} .
\]
Invoking standard concentration inequalities of the largest singular value of $A_{b_i}$ and a union bound, we get
\[
\max_{i \in I^c} \normm{A_{b_i}} \leq 1 + \sqrt{B/m} + \sqrt{2t\log(L)/m}, \text{ for some } t > 1 ,
\]
with probability at least $1-L^{1-t}$. The bounds in \eqref{eq:ABql1} are also valid for the group Lasso under our sample complexity bound. Combining this discussion with Theorem~\ref{thm:ratestablependet} and Lemma~\ref{lem:errbndl1l2}, we have proved the following.
\begin{proposition}\label{pro:convrate-l1l2} 
Let $\kappa \in]0,1[$, $\vrho > 0$ small enough, $\delta > 0$ and $t > 1$. Consider the noisy phaseless measurements in \eqref{eq:GeneralPR} where $\avx$ is $s$-group sparse and $\sigma \eqdef \normm{\epsi}$ is small enough. Consider problem \eqref{eq:eqminregulPRpen} where $R$ is the $\ell_1-\ell_2$ norm and $\lambda = c\sigma$ for $c > 0$.
If $A$ is a Gaussian map with \iid entries from $\calN(0,1/m)$ with $m$ verifying \eqref{eq:mbndl1l2convrate}, then with probability at least $1-L^{1-t}-L^{-\zeta}-\frac{6}{m^2}-e^{-\frac{\delta^2}{2}}$, any minimizer $\xsol_{y,\lambda}$ of \eqref{eq:eqminregulPRpen} satisfies
\begin{multline}\label{eq:bnd-dist-l1l2}
\dist(\xsol_{y,\lambda},\avX) \leq \Bigg(\frac{2\sqrt{m}}{(1-\vrho)\normm{\avx}}\Ppa{2+c\frac{\sqrt{sm}}{(1-\vrho)\normm{\avx}}} \\
+ \frac{1+\frac{1+\delta}{1-\vrho}\Ppa{1 + \sqrt{B/m} + \sqrt{2t\log(L)/m}}}{1-\kappa}\frac{\Ppa{2+\frac{c}{2}\frac{\sqrt{sm}}{(1-\vrho)\normm{\avx}}}^2}{2c}\Bigg) \sigma .
\end{multline}
\end{proposition}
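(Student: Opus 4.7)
The plan is to combine three ingredients: Lemma~\ref{lem:errbndl1l2} which certifies that \eqref{eq:nondeg} and \eqref{eq:restinj} hold with overwhelming probability for the $\ell_1{-}\ell_2$ norm; Theorem~\ref{thm:ratestablependet} which then delivers a deterministic bound $\dist(\xsol_{y,\lambda},\avX)\leq 2b\sigma$ for $\sigma$ small enough; and the operator-norm/subgradient-norm estimates already collected just before the statement, which let us control the constant $b$ explicitly in terms of $s$, $B$, $L$, $m$, $\normm{\avx}$ and the remaining free parameters.

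First, I would invoke Lemma~\ref{lem:errbndl1l2} with the very same sample-complexity bound \eqref{eq:mbndl1l2convrate}, so that on an event of probability at least $1-L^{-\zeta}-6/m^2-e^{-\delta^2/2}$ both the restricted injectivity \eqref{eq:restinj} and the non-degenerate source condition \eqref{eq:nondeg} are in force, with $\max_{i\in I}\normm{w[b_i]}<\kappa$. In particular, Theorem~\ref{thm:ratestablependet}(ii) applies: for $\sigma$ small enough, every minimizer $\xsol_{y,\lambda}$ satisfies $\dist(\xsol_{y,\lambda},\avX)\leq 2b\sigma$ with
\[
b = \normm{{\BAx}_{{\T}}^{+}}\Ppa{2+c\normm{q}} + \alpha\,\frac{\Ppa{2+\tfrac{c}{2}\normm{q}}^2}{2c},
\qquad
\alpha = \frac{\anormOP{\proj_{\S}}{(1,2)}{2}+\normm{{\BAx}_{{\T}}^{+}}\norm{A\avx}{\infty}\anormOP{A_{\S}}{(1,2)}{2}}{1-\sigma_{\calC}(w_{\S})}.
\]

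Next I would insert the five concentration-type estimates that have been assembled just above the statement. On the same event (up to intersecting with a couple of additional events controlled by Lemma~\ref{lem:concent-inj} and Lemma~\ref{lem:conc-eta}) one has, as recorded in \eqref{eq:ABql1}, the bounds $\normm{{\BAx}_{\T}^{+}}\leq \sqrt{m}/((1-\vrho)\normm{\avx})$, $\norm{A\avx}{\infty}\leq (1+\delta)\normm{\avx}/\sqrt{m}$ and $\normm{q}<\sqrt{sm}/((1-\vrho)\normm{\avx})$; together with the group-Lasso specific bounds $\anormOP{\proj_{\S}}{(1,2)}{2}\leq 1$ and $\anormOP{A_{\S}}{(1,2)}{2}\leq \max_{i\in I^c}\normm{A_{b_i}}\leq 1+\sqrt{B/m}+\sqrt{2t\log(L)/m}$ holding with probability at least $1-L^{1-t}$; and finally $1/(1-\sigma_{\calC}(w_{\S}))\leq 1/(1-\kappa)$. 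Substituting these into the expression for $\alpha$ and then into $2b$ immediately yields the right-hand side of \eqref{eq:bnd-dist-l1l2}.

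Finally, a union bound over the failure events (the event of Lemma~\ref{lem:errbndl1l2}, the event controlling $\max_{i\in I^c}\normm{A_{b_i}}$, and the Lemma~\ref{lem:conc-eta} event absorbed into Lemma~\ref{lem:errbndl1l2}) gives the stated success probability $1-L^{1-t}-L^{-\zeta}-6/m^2-e^{-\delta^2/2}$. The only genuinely non-routine step is aligning the probabilistic events so that \emph{all} the above estimates hold simultaneously on a single event; once this is done, the rest is mechanical plug-in. Note that $\avx\in\T$ for the group Lasso (since $\T=\Span\{a_j : j\in\bigcup_{i\in I}b_i\}$ contains $\avx$), which is precisely the structural property that made the construction of $q$ and the independence argument for $\eta$ and $A_{\S}$ in Lemma~\ref{lem:errbndl1l2} work in the first place.
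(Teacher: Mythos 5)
Your proposal is correct and follows essentially the same route as the paper, which obtains the result by combining Lemma~\ref{lem:errbndl1l2} (for \eqref{eq:nondeg}--\eqref{eq:restinj}), Theorem~\ref{thm:ratestablependet}(ii) (for the bound $2b\sigma$), the estimates \eqref{eq:ABql1} together with the group-Lasso operator-norm bounds derived just before the statement, and a union bound over the failure events. Your added remark that $\avx\in\T$ for the group Lasso is exactly the structural fact the paper relies on for the certificate construction, so nothing is missing.
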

One can get also get an alternative form of the bound terms of the SNR exactly as we did for the Lasso. We leave the details to the reader.

\section{Numerical Experiments}\label{sec:numexp}
In this section, we discuss some numerical experiments to give give a gist of our results. For this, we will use a Bregman proximal gradient (BPG) algorithm, to be described shortly, to solve \eqref{eq:eqminregulPRpen}. However, we would like to stress the fact that our results are on the minimizers of \eqref{eq:eqminregulPRpen} and not on the outcome of BPG. In particular, BPG is guaranteed to converge to a critical point of \eqref{eq:eqminregulPRpen} in general. But it is still an open problem whether $m$ as large as devised by our bounds, typically scaling linearly in the intrinsic dimension of $\avx$, is sufficient for BPG to provably converge to a global minimizer (which is known by our results to be close to $\pm \avx$ for small noise). In fact, our numerical experiments suggest that more measurements are actually needed for BPG without a particular initialization to stably recover $\pm\avx$ from small noise.


\subsection{Bregman Proximal Gradient}
Let us recall that in \eqref{eq:eqminregulPRpen}, the smooth part (that we denote $f$) in the objective $F$ is semi-algebraic $C^2(\bbR^n)$, but is non-convex (though it is weakly convex). Besides, its gradient is not Lipschitz continuous. Therefore, we associate to the smooth part $f$ the kernel function in \eqref{eq:entropy}.
$\psi\in C^2(\bbR^n)$ has full domain and $1-$strongly convex function with a gradient that is Lipschitz over bounded subsets of $\bbR^n$. Moreover, $f$ is smooth relative to $\psi$ \cite{bolte_first_2017}, \ie 
\begin{equation}\label{eq:relsmooth}
\exists L > 0 \text{ such that }  D_{f}(x,z) \leq L D_\psi(x,z), \forall x, z \in \bbR^n .
\end{equation}
The constant $L$ can be computed explicitly and sharp estimates were proposed in \cite{godeme2023provable} in the oversampling regime. Problem \eqref{eq:eqminregulPRpen} is  amenable to the Bregman Proximal Gradient (BPG) as proposed in \cite{bolte_first_2017} and further studied for the phase retrieval without regularization in \cite{godeme2023provable,godeme2024stable}. Its main steps are summarized in Algorithm~\ref{alg:BPGBT}.

 \begin{algorithm}[H]
 	\caption{Bregman Proximal Gradient}
 	\label{alg:BPGBT}
 	\textbf{Initialization:} $\xo\in\bbR^n$ \;
 	\For{$k=0,1,\ldots$}{
 		\vspace{-0.25cm}
 		\begin{flalign} \tag{BPG}\label{BPGalgo} 
 			&\begin{aligned} 
 				&\xkp = \Ppa{\nabla\psi+\gamma\lambda\partial R}^{-1}\Ppa{\nabla\psi(\xk)-\gamma\nabla F(\xk)}, \quad \gamma<\frac{1}{L};  
 			\end{aligned}&
 		\end{flalign}
 	}
 \end{algorithm}
 
Since the regularizer $R$ in \eqref{eq:eqminregulPRpen} is proper, lsc and convex, the operator $\Ppa{\nabla\psi+\gamma\lambda\partial R}^{-1}$, which is nothing but the Bregman proximal operator, is non-empty and single-valued over the whole space.

\subsection{Results}
Throughout our experiments, $A$ is drawn from the Gaussian ensemble with \iid $\calN(0,1/m)$ entries. We take $n=128$ and we run Algorithm~\ref{alg:BPGBT} with a constant step-size $\gamma=\frac{0.99}{3+10^{-4}}$. For $R$, we have tested several regularizers as described hereafter.

\begin{example}[Lasso] 
For the $\ell_1$ norm, the Bregman proximal mapping with our entropy $\psi$ has a nice formula \cite[Proposition~5.1]{bolte_first_2017}. The underlying vector $\avx$ is taken to be sparse with $s=12$ non-zeros entries. The number of quadratic measurements is taken as $m=0.5\times s^{1.5}\times\log(n)$, which grows larger than linear with $s$. As there is no noise in this experiment, we took $\lambda=10^{-8}$. Figure~\ref{fig: solving_l1} shows the recovery results. The left plot of Figure~\ref{fig: solving_l1} displays the relative error of the iterates vs the number of iterations. On the right plot, we display the cardinality of the support of the iterates. Clearly, the left plot shows that Algorithm~\ref{alg:BPGBT} identifies the correct support after 300 iterations and converges to $\pm\avx$.
\begin{figure}[h]
\centering
\includegraphics[trim={0cm 8cm 0 8cm},clip,width=0.7\linewidth]{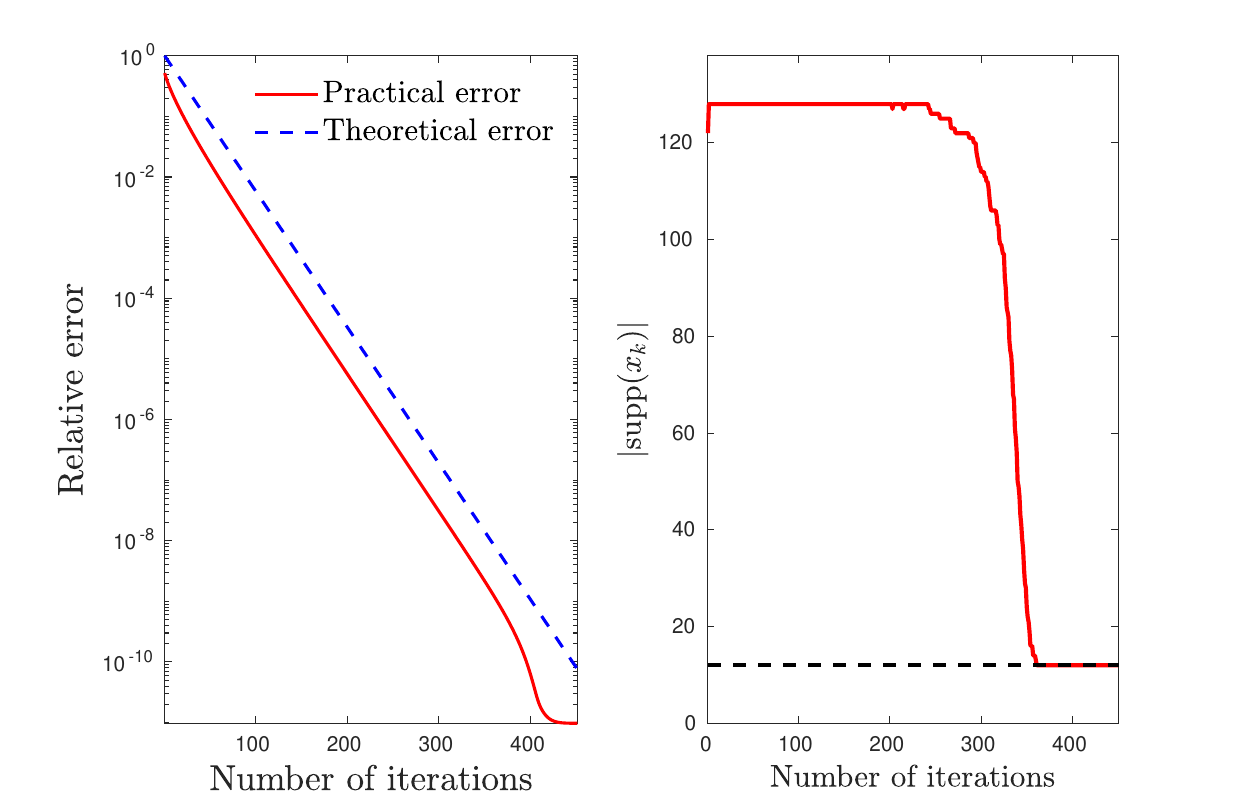}
 \caption{Phase retrieval with the Lasso ($\ell_1-$norm) regularizer.}
\label{fig: solving_l1}
\end{figure}

The left plot of Figure~\ref{fig:stability} depicts the evolution of the estimation error for the Lasso phase retrieval as a function of the noise level $\sigma$ (small). For each value of $\sigma$, we choose $\lambda=3\sigma$. As expected, the error scales linearly with the noise.

\begin{figure}[h]
\centering
\includegraphics[width=0.45\linewidth]{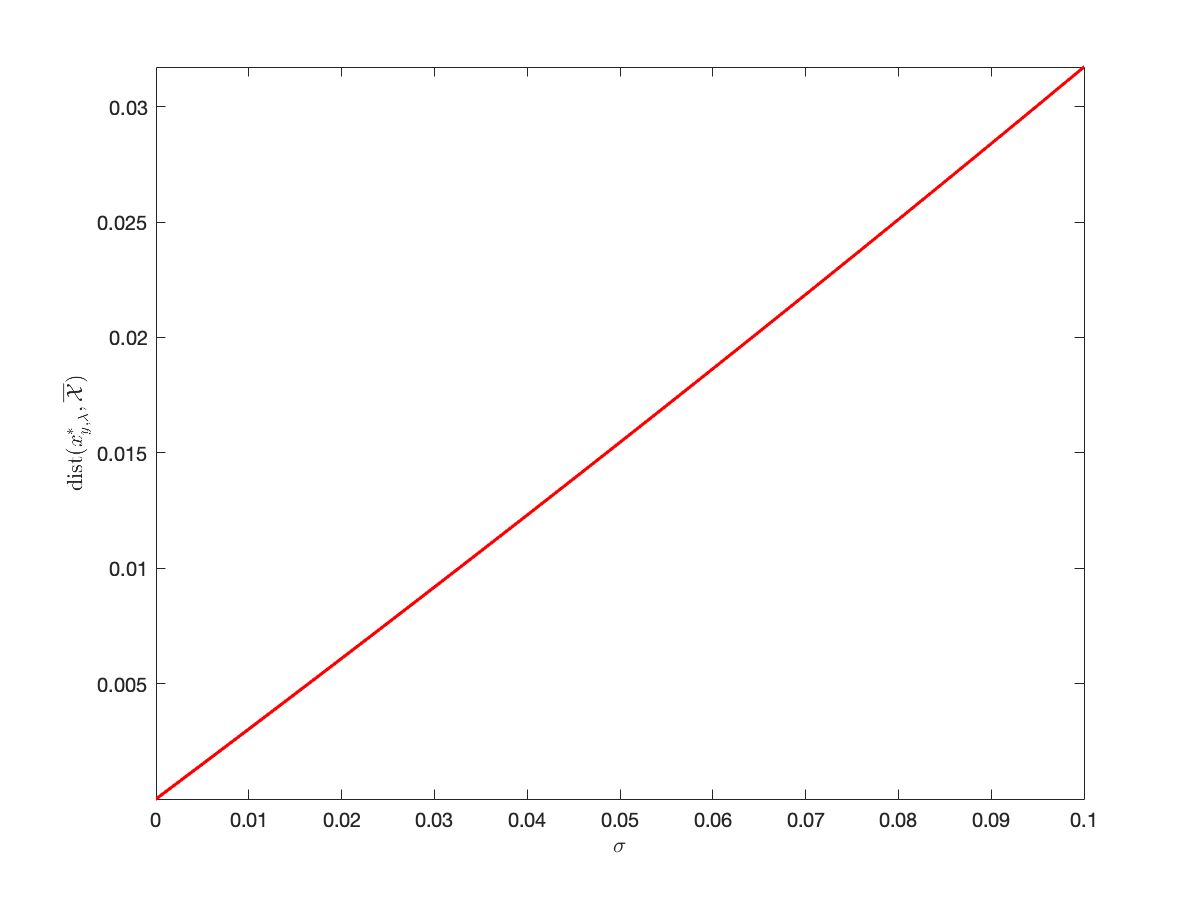}
\includegraphics[width=0.45\linewidth]{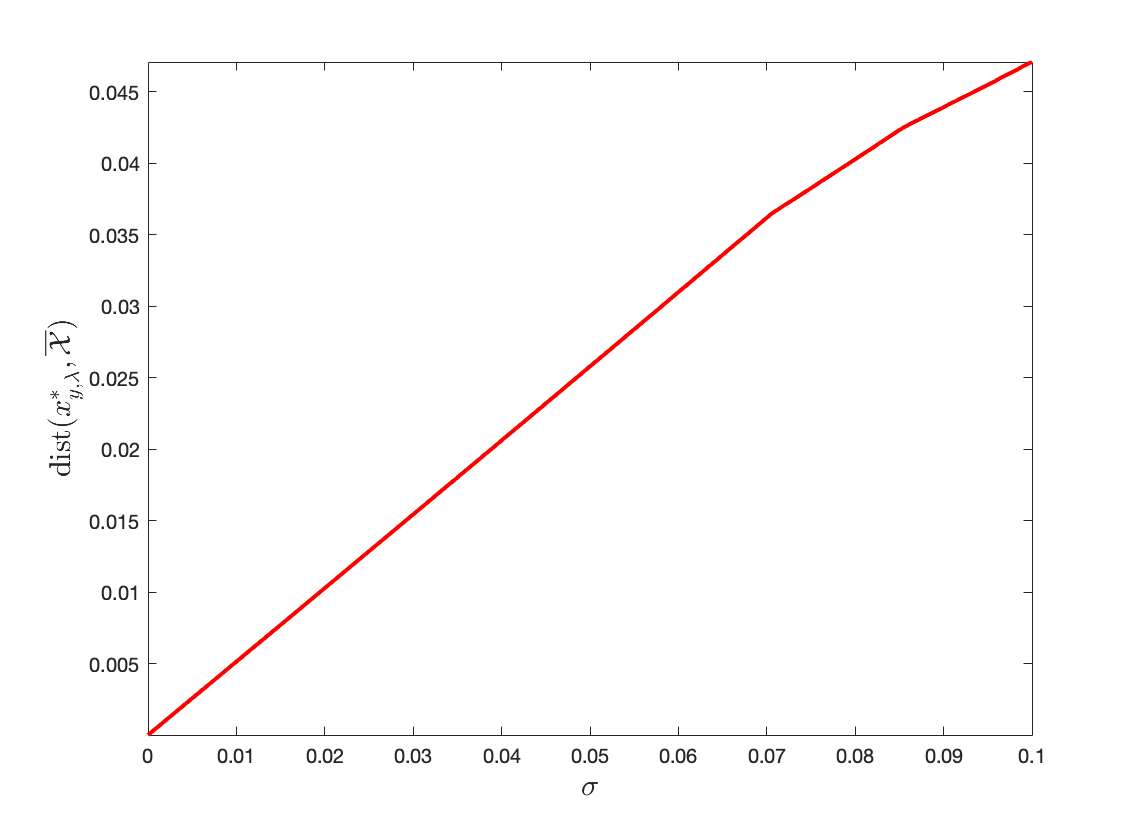}
\caption{Stable phase retrieval with the Lasso (left) and TV (right).}
\label{fig:stability}
\end{figure}

\end{example}

\begin{example}[Group Lasso] Here, we take $R$ as the group/block Lasso which is designed to promote group sparsity. By extending the result of \cite[Proposition~5.1]{bolte_first_2017}, using the specific structure of the $\ell_1-\ell_2$ norm, the Bregman proximal mapping with our entropy $\psi$ in this case turns out also to have a closed formula.
In our experiment, we consider $\avx$ to have $2$ non-zero blocks of size $B=8$ each. The number of quadratic measurements is $m=0.5\times(2 \times 8)^2\times\log(128)$. We also take $\lambda=10^{-8}$ as no noise was added in this experiment. The results are shown in Figure~\ref{fig: solving_l12}, and they are consistent with the discussion for the \onenorm.
\begin{figure}[h]
\centering
\includegraphics[trim={0cm 8cm 11cm 8.75cm},clip,width=0.35\linewidth]{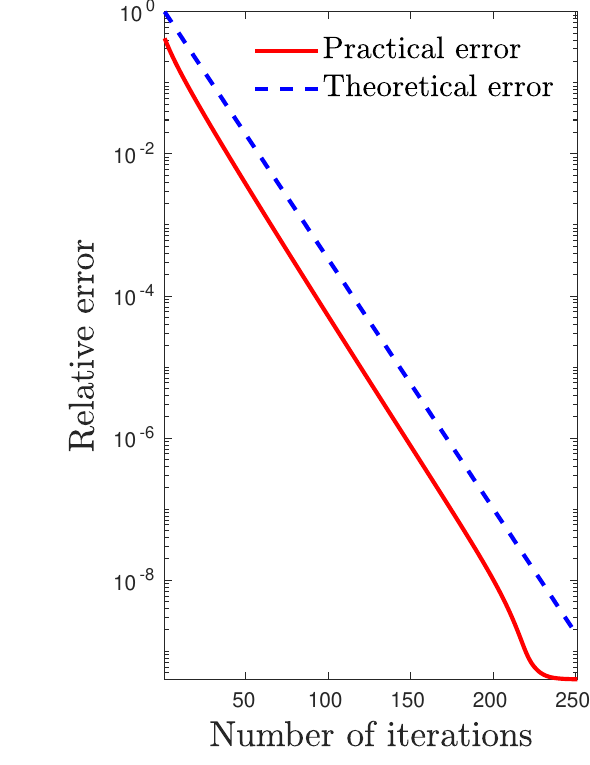}
 \caption{Phase retrieval with the group Lasso ($\ell_1-\ell_2$ norm) regularizer.}
\label{fig: solving_l12}
\end{figure}
\end{example}

\begin{example}[TV regularizer] 

In this experiment, the original vector $\avx$ is piecewise constant with $s=12$ randomly placed jumps. The number of measurements is $m=0.5\times s^2\times\log(n)$. The regularizer is the total variation (TV). However the Bregman proximal mapping of TV does not have an explicit expression. Therefore, we used an inner iteration to compute it using an accelerated proximal gradient algorithm on the dual \cite{FadiliDualTV10}. The results are depicted in Figure~\ref{fig: solving_tv}. The left plot shows the original (dashed line) and the recovered vector (solid line). The right plot shows the evolution of the relative error vs iterations. 
\begin{figure}[h]
\centering
\includegraphics[trim={0cm 7cm 0cm 7cm},clip,width=.75\linewidth]{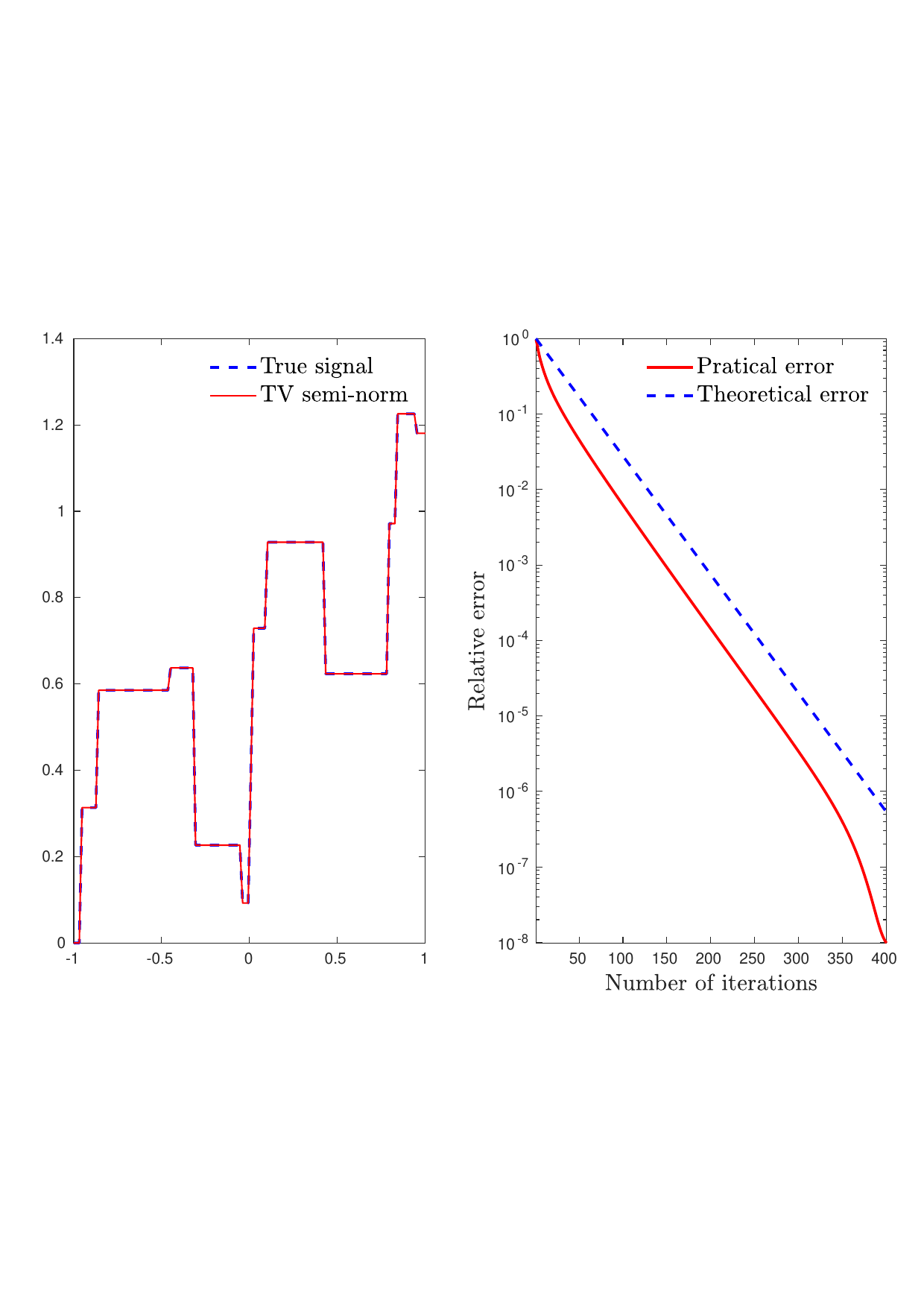}
 \caption{Phase retrieval with the TV semi-norm.}
\label{fig: solving_tv}
\end{figure}
\end{example}

The right plot of Figure~\ref{fig:stability} depicts the evolution of the estimation error for the TV phase retrieval as a function of the noise level $\sigma$. We choose $\lambda=3\sigma$. One again sees that the error scales linearly with the noise.

\begin{example}\textbf{(Wavelet synthesis-type prior).} 
We here cast the phase retrieval problem as
\begin{equation}\label{eq:tv_synthesis}
\min_{v \in \bbR^p} \frac{1}{4m}\normm{y-|AW v|^2}^2+\lambda \norm{v}{1}, \quad \lambda>0 ,
\end{equation}
where $W$ is a wavelet synthesis operator. The reconstructed vector is given by $x=W v$. When $W$ is orthonormal, this is equivalent to the analysis-type formulation with $D=W^\top$. This is not anymore the case when $W$ is redundant. 

In this experiment, we will use the shift-invariant wavelet dictionary with the Haar wavelet, which is closely related to the TV regularizer for 1D signals; see \cite{steidl2004equivalence}. We take the same number of jumps and measurements as in the previous example. The results are shown in Figure~\ref{fig: solving_tv_syn}.

\begin{figure}[h]
\centering
\includegraphics[trim={0cm 7cm 0cm 7cm},clip,width=.75\linewidth]{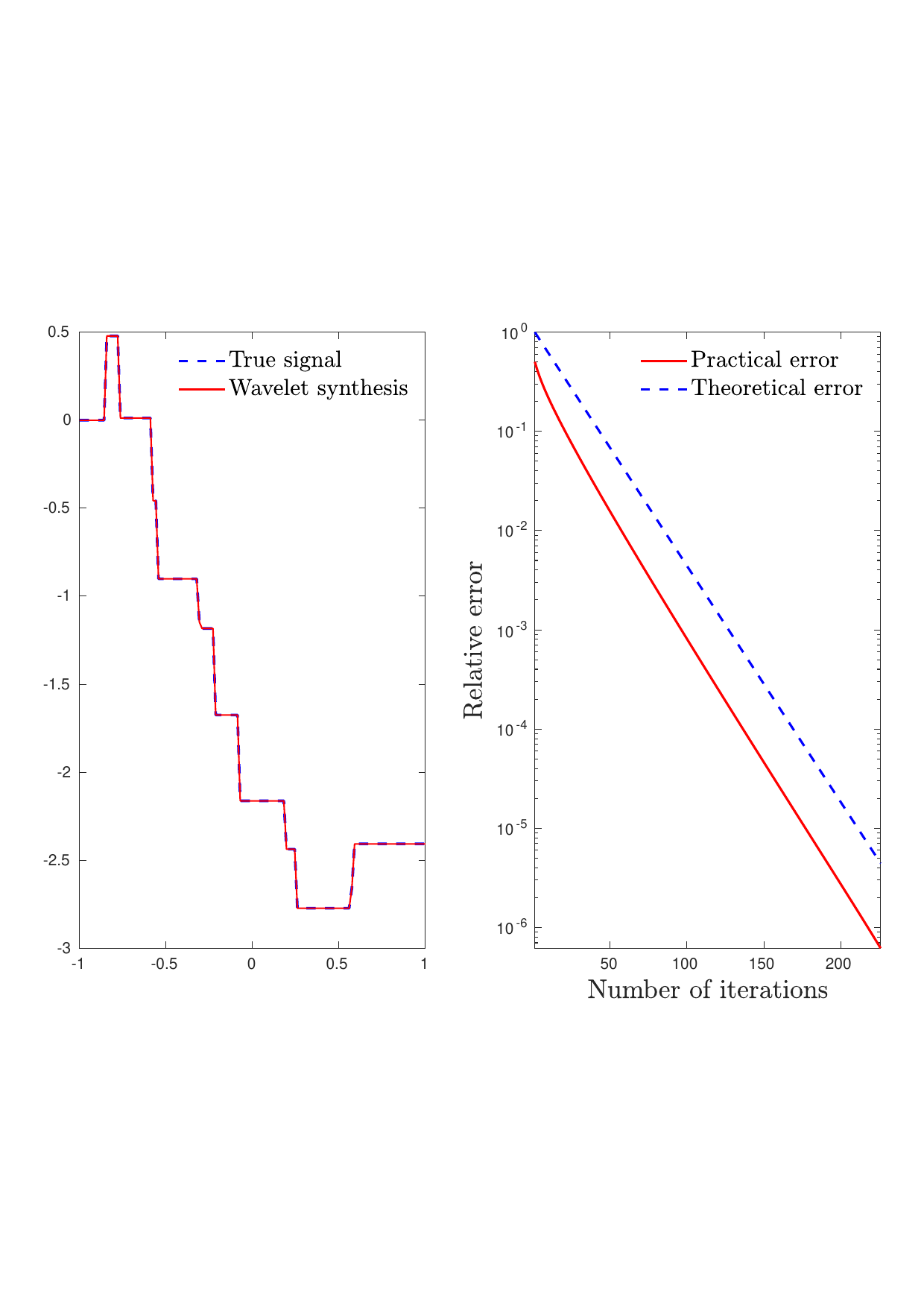}
\caption{Phase retrieval with the wavelet-synthesis prior formulation.}
\label{fig: solving_tv_syn}
\end{figure}
\end{example}


\begin{appendices}
	
\section{Proofs for Section~\ref{sec:errorbnd-gauss}}
\subsection{Proof of Lemma~\ref{lem:conc-eta}}\label{pr:lem:conc-eta}
We have
\begin{align*}
	\normm{\eta}
	&=\normm{\diag{|A_\T\avx|}{\BAx}_{\T}^{+,\top} \e{}} \\
	&\leq \norm{A_\T\avx}{\infty} \normm{{\BAx}_{\T}^{+}} \normm{\e{}} \\
	&=\norm{A_\T\avx}{\infty} \lambda_{\min}\Ppa{{\BAx}_{\T}^\top{\BAx}_{\T}}^{-1/2} \normm{\e{}} \\
	&=\norm{A_\T\avx}{\infty} \lambda_{\min}\Ppa{A_{\T}^\top\diag{|A_\T\avx|^2}A_{\T}}^{-1/2} \normm{\e{}} .
\end{align*}
By Lemma~\ref{lem:concent-inj}, we have
\[
\norm{A_\T\avx}{\infty} \leq \frac{1+\delta}{\sqrt{m}}\normm{\avx}
\]
with probability at least $1-e^{-\frac{\delta^2}{2}}$. Observe also that
\[
A_{\T}^\top\diag{|A_\T\avx|^2}A_{\T} = \sum_{r=1}^m{|\pscal{(a_r)_\T,\avx}|^2(a_r)_\T\transp{(a_r)_\T}} .
\]
It then follows from Lemma~\ref{lem:concent-hess} that
\[
\lambda_{\min}\Ppa{A_{\T}^\top\diag{|A_\T\avx|^2}A_{\T}} \geq \frac{1-\vrho}{m}\normm{\avx}^2 \geq \frac{(1-\vrho)^2}{m}\normm{\avx}^2
\]
with probability at least $1-\frac{6}{m^2}$ as soon as $m \geq C(\vrho)\dim(\T)\log(m)$. Thus,
\[
\Pr\Ppa{\normm{\eta} \geq \frac{1+\delta}{1-\vrho}\normm{\e{}}} \leq\frac{6}{m^2}+e^{-\frac{\delta^2}{2}}.
\] 

Since $q={\BAx}_{\T}^{+,\top} \e{}$, we have
\begin{align*}
	\normm{q}&\leq \normm{{\BAx}_{\T}^{+}} \normm{\e{}} .
\end{align*}
We then argue similarly to above by invoking again Lemma~\ref{lem:concent-hess}.


\subsection{Proof of Lemma~\ref{lem:errbndgage}}\label{pr:errbndgage}

Let us first observe that $\sigma_{\calC}(A_\S^\top\eta)=\max_{v\in\calC}\pscal{A_\S^\top\eta,v}$. Therefore 
\begin{align*}
	\Pr\Ppa{\sigma_{\calC}(A_\S^\top\eta)\geq \kappa \Big\vert \normm{\eta} \leq \tau} 
	&=\Pr\Ppa{\max_{v\in\calC}\pscal{A_\S^\top\eta,v}\geq \kappa \Big\vert \normm{\eta} \leq \tau},\\
	&=\Pr\Ppa{\max_{v\in\calV_\S}\pscal{A^\top\eta,v}\geq \kappa \Big\vert \normm{\eta} \leq \tau},\\
	&\leq \abs{\calV_\S}\max_{v\in\calV_\S}\Pr\Ppa{\pscal{Z\eta,v} \geq \kappa\sqrt{m} \Big\vert \normm{\eta} \leq \tau},
\end{align*}
where $Z \in \bbR^{n \times m}$ is drawn from the standard Gaussian ensemble. Let us observe that $Z\mapsto\pscal{Z\eta,v}$ is a Lipschitz continuous function of constant $\normm{\eta}\normm{v} \leq \tau $. Using Proposition~\ref{pro:gauss-space}, we get
\begin{align*}
	\Pr\Ppa{\sigma_{\calC}(A_\S^\top\eta)\geq \kappa \Big\vert \normm{\eta} \leq \tau}&\leq \abs{\calV_\S}\max_{v\in\calV_\S}e^{-\frac{m\kappa^2}{2\tau^2\normm{v}^2}},\\
	&\leq\abs{\calV_\S}e^{-\frac{m\kappa^2}{2\tau^2\max_{v\in\calV_\S}\normm{v}^2}},\\
	&=e^{-\frac{m\kappa^2}{2\tau^2\max_{v\in\calV_\S}\normm{v}^2}+\log(\abs{\calV_\S})}.
\end{align*}
In view of Lemma~\ref{lem:conc-eta}, we set $\tau=\frac{1+\delta}{1-\vrho}\normm{e}$, and we get the claim using \eqref{eq:probnondeg} with the devised value of $\beta$ and the bound on $m$.
\subsection{Proof of Lemma~\ref{lem:errbndl1l2}}\label{pr:errbndl1l2}
We use a union bound to get 
\begin{align*}
	\Pr\Ppa{\sigma_{\calC}\pa{A_\S^\top\eta} \geq \kappa \Big\vert \normm{\eta} \leq \tau} 
	&=\Pr\Ppa{\max_{i\in I^c}\norm{A_{b_i}^\top\eta}{2}  \geq \kappa \Big\vert \normm{\eta} \leq \tau}\\ 
	&\leq (L-s)\max_{i \in I^c}\Pr\Ppa{\normm{A_{b_i}^\top\eta} \geq \kappa \Big\vert \normm{\eta} \leq \tau} .
\end{align*}
We now argue as in \cite{candes2011simple} observing that conditioned on $\eta$, $\frac{m}{\normm{\eta}^2}\normm{A_{b_i}^\top\eta}^2=\frac{m}{\normm{\eta}^2}\normm{w[b_i]}^2$ are identically distributed as a $\chi$-squared random variable with $B$ degrees of freedom. By concentration of the latter, we have for any $t > 0$
\begin{align*}
	\Pr\Ppa{\frac{\sqrt{m}}{\normm{\eta}}\normm{A_{b_i}^\top\eta} \geq \frac{\sqrt{m}}{\normm{\eta}}\esp{\normm{A_{b_i}^\top\eta}} + t \Big\vert \normm{\eta} \leq \tau}
	= \Pr\Ppa{\frac{\sqrt{m}}{\normm{\eta}}\normm{A_{b_i}^\top\eta} \geq \sqrt{B} + t \Big\vert \normm{\eta} \leq \tau} \leq e^{-t^2/2} .
\end{align*}
Therefore
\begin{align*}
	\Pr\Ppa{\sigma_{\calC}\pa{A_\S^\top\eta} \geq \kappa \Big\vert \normm{\eta} \leq \tau} 
	&\leq L \exp\Ppa{-\Ppa{\sqrt{m}\kappa/\tau - \sqrt{B}}^2/2} .
\end{align*}
In view of Lemma~\ref{lem:conc-eta}, we set $\tau=\frac{1+\delta}{1-\vrho}\normm{e}=\frac{1+\delta}{1-\vrho}\sqrt{s}$. Plugging this  and the last inequality in \eqref{eq:probnondeg}, and the devised bound on $m$, we get the claim.

\section{Concentration inequalities}
We start recalling two standard concentration inequalities. For a random variable $X$ and $k \geq 1$, we define
\[
\norm{X}{\psi_k} = \sup_{p \geq 1} p^{-1/k}(\esp{|X|^p})^{1/p} .
\]
$\norm{X}{\psi_2}$ is known as the sub-Gaussian norm while $\norm{X}{\psi_1}$ is the sub-exponential norm.

\begin{proposition}[Hoeffding-type inequality]\label{pro:vec-hoeffding} Let $X=(X_1,\cdots,X_N)$ be independent centred sub-Gaussian random variables, and let $K=\max_{i}\norm{X_i}{\psi_2}$. Then for every  vector $a\in\bbR^N$ and $t\geq0,$ we have
	\[
	\prob\Ppa{\abs{\pscal{a,X}}\geq t}\leq e.\exp\Ppa{-\frac{ct^2}{K^2\normm{a}^2}}, 
	\]
	where $c>0$ is an absolute constant. 
\end{proposition}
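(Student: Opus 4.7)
The plan is to run the standard sub-Gaussian Chernoff argument. First, I would invoke the well-known equivalence (see \eg \cite{vershynin_introduction_2011}) that, for a centred random variable $Y$, the condition $\norm{Y}{\psi_2}\leq M$ implies a moment generating function bound of the form
\[
\esp{\exp(\lambda Y)}\leq \exp(c_1 \lambda^2 M^2), \qquad \forall \lambda\in\bbR,
\]
for some absolute constant $c_1>0$. Applied to each centred $X_i$ with $\norm{X_i}{\psi_2}\leq K$, this yields the pointwise bound $\esp{\exp(\lambda a_i X_i)}\leq \exp(c_1 \lambda^2 a_i^2 K^2)$.

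Second, I would exploit independence of the coordinates to tensorize the MGF: for every $\lambda \in \bbR$,
\[
\esp{\exp\Ppa{\lambda \pscal{a,X}}}=\prod_{i=1}^N \esp{\exp(\lambda a_i X_i)} \leq \exp\Ppa{c_1 \lambda^2 K^2 \normm{a}^2}.
\]
In other words, the linear combination $\pscal{a,X}$ is itself centred and sub-Gaussian with parameter of order $K\normm{a}$.

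Third, I would apply the Chernoff bound: for any $\lambda>0$,
\[
\prob\Ppa{\pscal{a,X}\geq t}\leq \exp\Ppa{-\lambda t + c_1 \lambda^2 K^2 \normm{a}^2}.
\]
Optimizing over $\lambda>0$ (the minimum is attained at $\lambda^\star = t/(2 c_1 K^2 \normm{a}^2)$) gives the one-sided bound $\prob\Ppa{\pscal{a,X}\geq t}\leq \exp\Ppa{-t^2/(4 c_1 K^2 \normm{a}^2)}$. Applying the same reasoning to $-X$ (which has the same $\psi_2$ norms since $\norm{\cdot}{\psi_2}$ is symmetric) and combining via a union bound produces the two-sided tail
\[
\prob\Ppa{\abs{\pscal{a,X}}\geq t}\leq 2\exp\Ppa{-\frac{t^2}{4 c_1 K^2 \normm{a}^2}}.
\]
Absorbing the factor $2$ into the multiplicative constant $e$ (noting $2\leq e$) and redefining $c \eqdef 1/(4c_1)$ yields the stated inequality.

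There is no genuine obstacle here; the only mildly technical ingredient is the first step, namely converting the $\psi_2$-norm bound into a MGF bound, which costs an absolute constant and is a textbook fact. Everything else is the classical Cramér-Chernoff scheme combined with independence.
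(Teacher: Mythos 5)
Your proof is correct: the paper itself does not prove this proposition but simply recalls it as a standard fact from the literature (citing Vershynin's notes), and your Chernoff-type argument --- MGF bound from the $\psi_2$ condition, tensorization by independence, optimization over $\lambda$, and a union bound absorbed into the constant $e$ --- is exactly the standard derivation that the cited reference uses.
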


\begin{proposition}[Bernstein-type inequality]\label{pro:vec-bernstein} Let $X_1,\cdots,X_N$ be independent centered sub-exponential random variables, and let $K=\max_{i}\norm{X_i}{\psi_1}$. Then for every  vector $a\in\bbR^N$ and $t\geq0,$ we have
	\[
	\prob\Ppa{\abs{\pscal{a,X}}\geq t}\leq e.\exp\Bba{-c\min\Ppa{\frac{t^2}{K^2\normm{a}^2},\frac{t}{K\norm{a}{\infty}}}}, 
	\]
	where $c>0$ is an absolute constant. 
\end{proposition}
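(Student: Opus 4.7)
The plan is to establish this as a classical Chernoff-style concentration bound, following the standard route for sums of independent sub-exponential random variables. The key ingredient is the well-known equivalent characterization of the sub-exponential norm in terms of moment generating function (MGF) growth: there exist absolute constants $c_1, c_2 > 0$ such that $\norm{X_i}{\psi_1} \leq K$ implies
\[
\esp{\exp(\lambda X_i)} \leq \exp(c_1 K^2 \lambda^2) \qforq |\lambda| \leq c_2/K,
\]
for any centered $X_i$. I would take this as the starting point (it is a standard fact, see \cite{vershynin_introduction_2011}).

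Next, I would apply the exponential Markov (Chernoff) inequality: for any $\lambda > 0$,
\[
\prob\Ppa{\pscal{a,X} \geq t} \leq e^{-\lambda t} \esp{\exp(\lambda \pscal{a,X})} = e^{-\lambda t} \prod_{i=1}^N \esp{\exp(\lambda a_i X_i)},
\]
using independence in the last equality. I would then restrict $\lambda$ to the range where the MGF bound applies to every term, namely $|\lambda a_i| \leq c_2/K$ for all $i$, i.e. $|\lambda| \leq c_2/(K\norm{a}{\infty})$. On this range, the product bound yields
\[
\prob\Ppa{\pscal{a,X} \geq t} \leq \exp\Ppa{-\lambda t + c_1 K^2 \lambda^2 \normm{a}^2}.
\]

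The final step is to optimize the exponent over $\lambda$ in the admissible interval $[0, c_2/(K\norm{a}{\infty})]$. The unconstrained optimum is $\lambda^\star = t/(2c_1 K^2 \normm{a}^2)$, giving an exponent of $-t^2/(4c_1 K^2 \normm{a}^2)$. Two regimes arise: if $\lambda^\star$ is admissible (the small-deviation regime), the bound is sub-Gaussian-like, $\exp(-c t^2/(K^2 \normm{a}^2))$; otherwise (the large-deviation regime) we saturate at $\lambda = c_2/(K \norm{a}{\infty})$ and a direct computation gives an exponent dominated by $-c t/(K \norm{a}{\infty})$. Taking the worse of the two yields a bound of the form $\exp(-c \min(t^2/(K^2\normm{a}^2), t/(K\norm{a}{\infty})))$ for a new absolute constant $c > 0$. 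Finally, I would apply the same argument to $-\pscal{a,X} = \pscal{-a,X}$, which has the same $\ell_2$ and $\ell_\infty$ norms, and combine the two one-sided bounds via a union bound; the factor of $2$ absorbed into the leading constant $e$ yields the stated inequality.

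There is no real obstacle here, only bookkeeping of constants; the only subtlety is the case split between the two regimes of $\lambda$, which is what produces the $\min$ in the exponent. The rest is a direct application of independence and Chernoff's method.
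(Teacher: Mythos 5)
Your proof is correct. Note that the paper itself does not prove this statement at all: it is recalled verbatim as a standard concentration inequality (it is Proposition 5.16 in Vershynin's lecture notes \cite{vershynin_introduction_2011}, which the paper cites in its preliminaries), so there is no in-paper argument to compare against. What you have written is precisely the classical textbook derivation of that result: the MGF characterization of the sub-exponential norm, Chernoff's bound with the product structure from independence, the restriction $|\lambda| \leq c_2/(K\norm{a}{\infty})$ so that every term $\lambda a_i X_i$ stays in the admissible MGF range, and the two-regime optimization that produces the $\min$ in the exponent. The case split checks out quantitatively: in the large-deviation regime one has $t \gtrsim K \normm{a}^2/\norm{a}{\infty}$, so the quadratic term at the saturated value $\lambda = c_2/(K\norm{a}{\infty})$ is dominated by half the linear term, leaving an exponent $\leq -c\, t/(K\norm{a}{\infty})$. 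The final union bound over the two signs costs a factor $2 < e$, which is absorbed by the stated prefactor. No gaps.
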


\begin{lemma}\label{lem:concent-inj} Fix $\delta > 0$ and $x\in\bbR^n$. We have, 
	\begin{enumerate}[label=(\roman*)]
		\item
		\begin{equation}
		\norm{A x}{\infty} \leq \frac{1+\delta}{\sqrt{m}}\normm{x}.
		\end{equation}
		This happens with probability at least $1-e^{\frac{-\delta^2}{2}}$. 
		\item Moreover
		\begin{equation}
		\norm{A x}{\infty} \leq \sqrt{(1+\delta)\frac{2\log(m)}{m}}\normm{x}.
		\end{equation}
		with probability at least $1-m^{-\delta}$. 
	\end{enumerate}
\end{lemma}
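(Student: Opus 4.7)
Both parts of the lemma are tail bounds on the $\ell_\infty$ norm of the Gaussian image $Ax$, and the natural starting point for both is the same: since $A$ has $\iid$ $\calN(0,1/m)$ entries, each coordinate $(Ax)_r=\pscal{a_r,x}$ is $\calN(0,\normm{x}^2/m)$, and the $m$ coordinates are mutually independent across $r \in \bbrac{m}$. The two parts then diverge according to which probabilistic tool I would use: a one-shot Gaussian concentration inequality for (i), and a union bound/Gaussian tail for (ii).

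For part (ii), I would apply the standard Gaussian tail $\Pr(\abs{(Ax)_r} \geq t) \leq 2 e^{-mt^2/(2\normm{x}^2)}$ to each coordinate, followed by a union bound over $r \in \bbrac{m}$:
\[
\Pr\Ppa{\norm{Ax}{\infty} \geq t} \leq 2m \exp\Ppa{-\frac{m t^2}{2\normm{x}^2}} .
\]
Plugging in $t = \normm{x}\sqrt{2(1+\delta)\log(m)/m}$ turns the right-hand side into $2 m^{-\delta}$, and the spurious factor of $2$ is absorbed into a trivial adjustment of $\delta$ (or, equivalently, by noting that for $m\geq 2$ one can tighten the union bound through the one-sided Gaussian tail). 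This step is entirely routine.

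For part (i), the idea is to use the Gaussian concentration of Lipschitz functions, Proposition~\ref{pro:gauss-space}. I would first rescale by setting $B = \sqrt{m}\,A$, so that $B$ has $\iid$ $\calN(0,1)$ entries and $\norm{Ax}{\infty} = \norm{Bx}{\infty}/\sqrt{m}$. The map $B \mapsto \norm{Bx}{\infty}$ is $\normm{x}$-Lipschitz with respect to the Frobenius norm, since
\[
\abs{\norm{B_1 x}{\infty} - \norm{B_2 x}{\infty}} \leq \norm{(B_1-B_2)x}{\infty} \leq \max_{r}\normm{(B_1)_r-(B_2)_r}\normm{x} \leq \normm{B_1-B_2}_F \normm{x}.
\]
Proposition~\ref{pro:gauss-space} applied with $K=\normm{x}$ and $t=\delta\normm{x}$ then yields
\[
\Pr\Ppa{\norm{Bx}{\infty} \geq \esp{\norm{Bx}{\infty}} + \delta\normm{x}} \leq e^{-\delta^2/2},
\]
and rescaling by $1/\sqrt{m}$ delivers the stated form provided the expectation is controlled at the right scale.

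The main obstacle is exactly this control of $\esp{\norm{Bx}{\infty}}$: the cheap bound via the expected maximum of Gaussians only yields $\esp{\norm{Bx}{\infty}} \leq \normm{x}\sqrt{2\log(2m)}$, which is looser than $\normm{x}$. I would therefore argue that the ``$1$'' in the stated bound $(1+\delta)/\sqrt{m}$ plays the role of this mean term, while the $\delta/\sqrt{m}$ part is the genuine concentration piece; the concentration step itself is straightforward once the Lipschitz constant is pinned down. Everything else, including the scaling between $A$ and $B$ and the two separate probability bounds $e^{-\delta^2/2}$ and $m^{-\delta}$, follows mechanically.
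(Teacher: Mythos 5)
Your treatment of part (ii) is correct and is essentially the paper's own argument: each coordinate $\pscal{a_r,x}\sim\calN(0,\normm{x}^2/m)$, Gaussian tail bound, union bound over $r\in\bbrac{m}$. The paper avoids your factor of $2$ by using the one-sided form $\prob(|Z|\geq t)\leq e^{-t^2/2}$, but that is cosmetic.

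For part (i) there is a genuine gap, and you have in fact located it yourself: Gaussian concentration only controls the fluctuation of $\norm{Bx}{\infty}$ around its mean, and $\esp{\norm{Bx}{\infty}}$ is of order $\normm{x}\sqrt{2\log(2m)}$, not $\normm{x}$. Your suggestion that the ``$1$'' in $(1+\delta)/\sqrt{m}$ can absorb the mean term does not work: after rescaling, the claim would require $\esp{\norm{Bx}{\infty}}\leq\normm{x}$, which fails once $m$ exceeds a small constant, and no adjustment of constants repairs it. Indeed the statement of (i) is false for fixed $\delta$ and large $m$: writing $\norm{Ax}{\infty}=\max_{r}|g_r|\,\normm{x}/\sqrt{m}$ with $g_r$ \iid standard normal, one has $\prob\bpa{\norm{Ax}{\infty}\leq(1+\delta)\normm{x}/\sqrt{m}}=\bpa{\prob(|Z|\leq 1+\delta)}^m\to 0$, whereas the lemma asserts a lower bound $1-e^{-\delta^2/2}$ independent of $m$. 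You should also know that the paper's own proof of (i) takes a different route, which is likewise flawed: it bounds $\norm{Ax}{\infty}\leq\normm{Ax}$ and applies Proposition~\ref{pro:gauss-space} to $A\mapsto\normm{Ax}$, asserting $\esp{\normm{Ax}}\leq\normm{x}/\sqrt{m}$; but $\esp{\normm{Ax}^2}=\normm{x}^2$, so that argument correctly yields only $\normm{Ax}\leq(1+\delta/\sqrt{m})\normm{x}$ with probability $1-e^{-\delta^2/2}$, off by a factor $\sqrt{m}$ from the claim. The statement that is actually provable at this probability scale is part (ii) (equivalently, $\norm{Ax}{\infty}\leq\normm{x}(\sqrt{2\log(2m)}+\delta)/\sqrt{m}$ via your concentration route), and the downstream use of (i) in Lemma~\ref{lem:conc-eta} would need to be adjusted accordingly. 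So: your instinct not to claim a complete proof of (i) was right, and the obstruction is not a missing trick but the statement itself.
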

\begin{proof} 
To show (i), observe that $\norm{A x}{\infty} \leq \normm{A x}$ and then use Proposition~\ref{pro:gauss-space} since $A \mapsto \normm{A x}$ is $\normm{x}$-Lipschitz continuous and $\esp{\normm{Ax}} \leq \normm{x}/\sqrt{m}$. For the second claim, we use a more direct and standard argument. We have by the union bound and the tail bound for a standard Gaussian random variable that
\begin{align*}
\prob\Ppa{\norm{A x}{\infty} \geq \sqrt{(1+\delta)\frac{2\log(m)}{m}}\normm{x}} 
&\leq m\prob\Ppa{|Z| \geq \sqrt{2(1+\delta)\log(m)}}, \qquad Z \sim \calN(0,1) \\
&\leq m e^{-(1+\delta)\log(m)} = m^{-\delta} .  
\end{align*}
\end{proof}

\medskip

Let us consider the model linear subspace $\T\subset\bbR^n$, and denote $d=\dim(T)$.  Throughout this section, we will see $\T$ as $\bbR^d$ since there exists an isometry from $\bbR^d$ onto $T$. In turn, $A_T$ can be viewed as a $m\times d$ matrix whose entries are $\iid$ $\calN(0,1/m)$. We have the following concentrations.
\begin{lemma}\label{lem:concent-hess} 
	Fix $\vrho\in]0,1[$ (small enough) and choose $0<\bar{\vrho}<\frac{\vrho+3}{10\log(m)}$. 
	\begin{enumerate}
		\item[(i)] If the number of samples obeys $m \geq C(\vrho)d\log(d)$, for some sufficiently large $C(\vrho) > 0$, we have
		\begin{align}\label{eq:conc-hess}
			\normm{mA_T^\top\diag{|A_T\avx|^2}A_T-\Ppa{2\avx\transp{\avx}+\normm{\avx}^2\Id}}\leq \vrho \normm{\avx}^2.
		\end{align}
		with a probability at least $1-5e^{-\zeta d}-\frac{4}{d^2}$ where $\zeta$ is a fixed numerical constant.
		\item[(ii)] If the number of samples obeys $m \geq C(\bar{\vrho},\vrho)d\log(m)$, for some sufficiently large $C(\bar{\vrho},\vrho) > 0$, \eqref{eq:conc-hess} holds true with a probability at least $1-\frac{6}{m^2}$.
	\end{enumerate}
\end{lemma}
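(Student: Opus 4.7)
The plan is to rewrite the quantity as an empirical mean of iid random matrices and apply matrix Bernstein after a suitable truncation. By homogeneity we may assume $\normm{\avx}=1$. Writing $g_r = \sqrt{m}(a_r)_T$, the $g_r$ are iid $\calN(0,\Id_d)$ and
\[
mA_T^\top\diag{|A_T\avx|^2}A_T = \frac{1}{m}\sum_{r=1}^m |\pscal{g_r,\avx}|^2 g_r \transp{g_r}.
\]
A direct moment computation (Isserlis' theorem for fourth-order Gaussian moments) yields $\esp{|\pscal{g_r,\avx}|^2 g_r \transp{g_r}} = 2\avx\transp{\avx} + \Id$, so the centre in \eqref{eq:conc-hess} is indeed the expectation of the sample average.

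The main obstacle is that each summand $Y_r = |\pscal{g_r,\avx}|^2 g_r \transp{g_r}$ has operator norm $|\pscal{g_r,\avx}|^2\normm{g_r}^2$, a product of two $\chi^2$ variables that is not sub-exponential, so matrix Bernstein does not apply directly. To circumvent this, introduce thresholds $T_1,T_2>0$ and the events $\calE_r = \Ba{|\pscal{g_r,\avx}|\leq T_1}\cap\Ba{\normm{g_r}\leq T_2}$. Gaussian tail estimates (Proposition~\ref{pro:gauss-space} and Lemma~\ref{lem:concent-inj}) together with $\chi^2$ concentration and a union bound over $r\in\bbrac{m}$ show that $\bigcap_r \calE_r$ holds with high probability; on this event the matrix of interest coincides with the truncated version $\tilde M \eqdef \frac{1}{m}\sum_r Y_r \mathbf{1}_{\calE_r}$.

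Next, matrix Bernstein is applied to $\tilde M - \esp{\tilde M}$ using the almost-sure bound $\normm{Y_r\mathbf{1}_{\calE_r}} \leq T_1^2 T_2^2$ and the variance proxy $\esp{\normm{Y_r}^2} = O(d^2)$ (standard Gaussian moments). The truncation bias $\normm{\esp{\tilde M}-(2\avx\transp{\avx}+\Id)}$ is controlled by Cauchy--Schwarz and Gaussian tail integrals and is made $o(\vrho)$ by the threshold choice. This yields
\[
\prob\Ppa{\normm{\tilde M - \esp{\tilde M}} \geq \vrho/2} \leq 2d\exp\Ppa{-\frac{cm\vrho^2}{d+T_1^2T_2^2\vrho}}.
\]

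The two parts are then obtained by tuning the thresholds. For (i), take $T_1 = c_1\sqrt{\log d}$ and $T_2 = c_2(\sqrt d+\sqrt{\log d})$: the truncation failure contributes $O(1/d^2)$ (matching the $4/d^2$ term) and the Bernstein tail is bounded by $5e^{-\zeta d}$ as soon as $m\geq C(\vrho)d\log d$. For (ii), take $T_1 = c'_1\sqrt{\log m}$ so that Lemma~\ref{lem:concent-inj}(ii) makes the corresponding truncation fail with probability $O(1/m^2)$, and $T_2 = c'_2(\sqrt d+\sqrt{\log m})$; the constraint $\bar\vrho<(\vrho+3)/(10\log m)$ is precisely what keeps $T_1^2 T_2^2\vrho$ proportional to $d$ in the Bernstein denominator, so that $m\geq C(\bar\vrho,\vrho)d\log m$ suffices to reach a Bernstein failure probability of order $1/m^2$, giving a total bound of $6/m^2$.
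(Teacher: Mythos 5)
Your setup is sound: the identity $mA_T^\top\diag{|A_T\avx|^2}A_T=\frac{1}{m}\sum_{r=1}^m|\pscal{g_r,\avx}|^2g_r\transp{g_r}$ with $g_r\sim\calN(0,\Id_d)$ \iid, the computation of the expectation $2\avx\transp{\avx}+\Id$ (for $\normm{\avx}=1$), and the truncate-then-control-the-bias scheme are all correct. The gap is in the matrix Bernstein step. After truncation your summands are bounded in operator norm by $T_1^2T_2^2\asymp d\log(\cdot)$, while the relevant matrix variance is $\normm{\esp{\Ppa{|\pscal{g,\avx}|^2g\transp{g}}^2}}\asymp d$ (matrix Bernstein requires this operator norm, not the scalar bound $\esp{\normm{Y_r}^2}=O(d^2)$ you quote, which would be even worse). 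Since the constant target accuracy $\vrho$ exceeds the ratio $d/(d\log(\cdot))$, your displayed tail sits in its sub-exponential regime and the exponent is of order $m\vrho/(T_1^2T_2^2)\asymp m\vrho/(d\log(\cdot))$. Hence for part (i) a failure probability of $5e^{-\zeta d}$ forces $m\gtrsim d^2\log d/\vrho$, not $d\log d$; and for part (ii) a failure probability of $6/m^2$ forces $m\gtrsim d(\log m)^2/\vrho$, losing a $\log m$ factor that no choice of $C(\bar{\vrho},\vrho)$ can absorb. Relatedly, the constraint $\bar{\vrho}<(\vrho+3)/(10\log m)$ cannot ``keep $T_1^2T_2^2\vrho$ proportional to $d$'': that product is $\asymp d\vrho\log m$ no matter what $\bar{\vrho}$ is; in the paper $\bar{\vrho}$ is an accuracy parameter for one scalar Bernstein bound, chosen so that its sub-Gaussian branch dominates.

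The missing idea is a decoupling of the heavy weight $|\pscal{g_r,\avx}|^2$ from the $d$-dimensional part. The paper's proof of (ii) (and the proof of (i), which is imported from \cite{godeme2023provable}) reduces \eqref{eq:conc-hess} to controlling the scalar quantity $\frac1m\sum_r|\pscal{g_r,\avx}|^2|\pscal{g_r,v}|^2$ over a net of $\bbS^{d-1}$, expands $\pscal{g_r,v}$ along $\avx$ and $\avx^\perp$, and, conditioning on the values $(\pscal{g_r,\avx})_r$, applies the scalar Hoeffding- and Bernstein-type inequalities (Propositions~\ref{pro:vec-hoeffding} and \ref{pro:vec-bernstein}) in which the truncated weights $|\pscal{g_r,\avx}|^2\le 10\log m$ enter only through the $\ell_2$ and $\ell_\infty$ norms of the weight vector. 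In this way the $O(\log m)$ truncation level never multiplies the $O(d)$ operator norm of $g_r\transp{g_r}$, and $m\gtrsim d\log m$ suffices. Without such a decomposition (or an equivalent decoupling/chaining device), plain truncated matrix Bernstein cannot reach the stated sample complexities, so as written your argument does not prove the lemma.
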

\begin{proof} 
	The proof of claim $(i)$ is just an application of \cite[Lemma~B.2]{godeme2023provable} to $A_T$. 
	
	For the proof of claim $(ii)$, we have to modify the choice of $m$ in the different concentrations used in the proof of \cite[Lemma~B.2]{godeme2023provable}. We provide here a self-contained proof. We have to emphasize that showing \eqref{eq:conc-hess} is  similar to showing  that 
	\begin{align}\label{eq:bndsspectmtxgaussian}
		\normm{\som{|\bar{a}_r[1]|^2\bar{a}_r \transp{\bar{a}_r}-\Ppa{2e_1\transp{e_1}+\Id}}}\leq {\vrho},
	\end{align}
	where the entries of $\bar{a}_r$ are now standard Gaussian random variable and $e_1$ is a vector of the standard basis.
	
	From symmetry arguments, showing \eqref{eq:bndsspectmtxgaussian} amounts to proving that
	\begin{align*}
		V(v) \eqdef \left|\som{|\bar{a}_r[1]|^2|\transp{\bar{a}_r} v|^2}-\Ppa{1+2v[1]^2}\right| \leq {\vrho}
	\end{align*}
	for all $v\in\bbS^{d-1}$. The rest of the proof shows this claim.
	
	Let $\wtilde{a}_r=\Ppa{\bar{a}_r[2],\ldots,\bar{a}_r[d]}$ and $\wtilde{v}=\Ppa{v[2],\ldots,v[d]}.$ We rewrite 
	\[
	|\transp{\bar{a}_r}v|^2=\Ppa{\bar{a}_r[1]v[1]+\transp{\wtilde{a}}_r\wtilde{v}}^2=\Ppa{\bar{a}_r[1]v[1]}^2+\Ppa{\wtilde{a}_r^\top\wtilde{v}}^2+2\bar{a}_r[1]v[1]\wtilde{a}_r^\top\wtilde{v} .
	\]
	We plug this decomposition into $V(v)$ to get 
	\begin{align*}
		V(v)&\leq \left|\som{\bar{a}_r[1]^4-3}\right|v[1]^2 +\left|\som{\bar{a}_r[1]^2-1}\right|\normm{\wtilde{v}}^2+2\left|\som{|\bar{a}_r[1]|^3v[1]\wtilde{a}_r^\top\wtilde{v}}\right| \\
		&+\left|\som{\bar{a}_r[1]^2\Ppa{\wtilde{a}_r^\top\wtilde{v}-\normm{\tilde{v}}^2}}\right|. 
	\end{align*}
	If $X \sim \mathcal{N}(0,1)$ we have $\esp{X^{2p}}=\frac{(2p)!}{2^pp!}$ for $p \in \N$, and in particular $\esp{X^2}=1$ and $\esp{X^4}=3$. By the Tchebyshev's inequality and a union bound argument, $\forall \eps>0,$ and a constant $C(\eps)\approx \max\Ppa{26,\frac{96}{\eps^2}}$ such that when  $m\geq C(\eps)$ we have, 
	\begin{align*}
		\som{\Ppa{\bar{a}_r[1]^4-3}}<\eps,\quad\som{\Ppa{\bar{a}_r[1]^2-1}}<\eps,\quad \som{\bar{a}_r[1]^6\leq20} \\
		\qandq \max\limits_{1\leq r\leq m}|\bar{a}_r[1]|\leq \sqrt{10\log{m}} .
	\end{align*}
	Each of these events happens with probability at least $1-\frac{1}{m^2}$, and thus their intersection occurs with probability at least $1-\frac{4}{m^2}$. On this intersection event, we have 
	\begin{align*}
		V(v)\leq\eps+2\left|\som{\bar{a}_r[1]^3v[1]\wtilde{a}_r^\top\wtilde{v}}\right|
		+\left|\som{\bar{a}_r[1]^2\Ppa{\wtilde{a}_r^\top\wtilde{v}-\normm{\tilde{v}}^2}}\right|.
	\end{align*}
	On the one hand, by Proposition~\ref{pro:vec-hoeffding}, we have 
	\begin{align*}
		\forall \vrho'>0,\quad \left|\som{\bar{a}_r[1]^3v[1]\wtilde{a}_r^\top\wtilde{v}}\right|<\vrho'|v[1]|\normm{\wtilde{v}}^2, 
	\end{align*}
	with a probability  
	\[
	p\geq1-e\exp\Ppa{-\frac{c\vrho'^2m^2}{d\sum_{r=1}^m a_r[1]^6}}\geq 1-e\exp\Ppa{-\frac{c\vrho'^2m}{20d}}\geq 1-\exp\Ppa{-\frac{2Cm}{d}},  
	\]
	where $C$ is a constant that is large enough. When $m\geq \frac{1}{C}d\log(m)$ we get the  bound with probability $p\geq1-\frac{1}{m^2}$.
	On the other hand, by Proposition~\ref{pro:vec-bernstein}, we have 
	\begin{align*}
		\forall \bar{\vrho}>0,\quad \left|\som{\bar{a}_r[1]^2\Ppa{\wtilde{a}_r^\top\wtilde{v}-\normm{\wtilde{v}}^2}}\right|\leq\bar{\vrho}\normm{\wtilde{v}}^2, 
	\end{align*}
	with probability
	\begin{align*}
		p'&\geq 1-\exp\Bba{-\min\Ppa{\frac{\bar{\vrho}^2m^2}{d\sum_{r=1}^m a_r[1]^4},\frac{\bar{\vrho}m}{d\max\limits_{1\leq r\leq m}a_r[1]^2}}},\\
		&\geq 1-\exp\Bba{-\min\Ppa{\frac{\bar{\vrho}^2m}{d(\eps+3)},\frac{\bar{\vrho}m}{10d\log(m)}}},
	\end{align*}
	For $\bar{\vrho}<\frac{\eps+3}{10\log(m)}$, we get  that $p'\geq 1-\exp\Ppa{-\frac{\bar{\vrho}^2m}{d(\eps+3)}}\geq 1-\exp\Ppa{-\frac{2C'm}{d}}$ for $C'$ large enough. Thus, taking again $m\geq \frac{1}{C'}d\log(m)$ we get the  bound with probability $p'\geq1-\frac{1}{m^2}$. Overall, for any $v\in \bbS^{n-1}\cap \T$, we have with probability at least $1-\frac{6}{m^2}$
	\[
	V(v)\leq \eps+\vrho'+2\bar{\vrho}. 
	\]
	We conclude with a covering type argument which can be plugged into the sublinear term and we choose $m\geq C (\vrho,\bar{\vrho})d\log(m)$ and observe that $\log(m)\geq\log(d)$. Therefore, choosing $\vrho=\eps+\vrho'+2\bar{\vrho}$, we get the claim.   
\end{proof}

\end{appendices}

\begin{acknowledgments}
The authors thank the French National Research Agency (ANR) for funding the project FIRST (ANR-19-CE42-0009). 
\end{acknowledgments}
\footnotesize
\bibliographystyle{plain}
{
\bibliography{biblio}

\begin{thebibliography}{10}

\bibitem{tarokh_new_2013}
M.~Ak{\~A}{\S}akaya and V.~Tarokh.
\newblock New conditions for sparse phase retrieval.
\newblock {\em arxiv:1310.1351}, 2013.

\bibitem{amelunxen_living_2014}
D.~Amelunxen, M.~Lotz, M.~B. McCoy, and J.~A. Tropp.
\newblock Living on the edge: phase transitions in convex programs with random
  data.
\newblock {\em IMA J. Information and Inference}, 3(3):224--294, September
  2014.

\bibitem{amra_instantaneous_2018}
C.~Amra, M.~Zerrad, S.~Liukaityte, and M.~Lequime.
\newblock Instantaneous one-angle white-light scatterometer.
\newblock {\em Opt. Express, OE}, 26(1):204--219, January 2018.

\bibitem{AT03}
A.~Auslender and M.~Teboulle.
\newblock Asymptotic {Cones} and {Functions} in {Optimization} and
  {Variational} {Inequalities}.
\newblock {\em Springer {Monographs} in {Mathematics}}, pages 25--80, 2003.

\bibitem{bahmani_efficient_2015}
S.~Bahmani and J.~Romberg.
\newblock Efficient {Compressive} {Phase} {Retrieval} with {Constrained}
  {Sensing} {Vectors}.
\newblock In {\em Advances in {Neural} {Information} {Processing} {Systems}},
  volume~28. Curran Associates, Inc., 2015.

\bibitem{bandeira_near-optimal_2013}
A.~S. Bandeira and D.~G. Mixon.
\newblock Near-optimal phase retrieval of sparse vectors.
\newblock In {\em SPIE Proceedings}. SPIE, September 2013.

\bibitem{bauschke_phase_2002}
H.~H. Bauschke, P.~L. Combettes, and D.~R. Luke.
\newblock Phase retrieval, error reduction algorithm, and {Fienup} variants: a
  view from convex optimization.
\newblock {\em J. Opt. Soc. Am. A}, 19(7):1334, July 2002.

\bibitem{beinert_total_2022}
R.~Beinert and M.~Quellmalz.
\newblock Total {Variation}-{Based} {Reconstruction} and {Phase} {Retrieval}
  for {Diffraction} {Tomography}.
\newblock {\em SIAM Journal on Imaging Sciences}, 15:1373--1399, September
  2022.

\bibitem{beinert_total_2023}
R.~Beinert and M.~Quellmalz.
\newblock Total {Variation}-{Based} {Reconstruction} and {Phase} {Retrieval}
  for {Diffraction} {Tomography} with an {Arbitrarily} {Moving} {Object}.
\newblock {\em PAMM}, 22(1), 2023.

\bibitem{CandesOWL15}
Magorzata Bogdan, Ewout van~den Berg, Chiara Sabatti, Weijie Su, and
  Emmanuel~J. Cand{\`e}s.
\newblock {SLOPE-Adaptive variable selection via convex optimization}.
\newblock {\em The Annals of Applied Statistics}, 9(3):1103 -- 1140, 2015.

\bibitem{bolte_first_2017}
J.~Bolte, S.~Sabach, M.~Teboulle, and Y.~Vaisbourd.
\newblock First order methods beyond convexity and {Lipschitz} gradient
  continuity with applications to quadratic inverse problems.
\newblock {\em SIAM J. Optim.}, 28(3):2131--2151, 2018.

\bibitem{BoucheronLugosi}
S.~Boucheron, G.~Lugosi, and P.~Massart.
\newblock {\em Concentration {Inequalities}: {A} {Nonasymptotic} {Theory} of
  {Independence}}.
\newblock Oxford University Press, February 2013.

\bibitem{buet2022immediate}
X.~Buet, M.~Zerrad, M.~Lequime, G.~Soriano, J.-J. Godeme, J.~Fadili, and
  C.~Amra.
\newblock Immediate and one-point roughness measurements using spectrally
  shaped light.
\newblock {\em Opt. Express}, 30(10):16078--16093, May 2022.

\bibitem{buet2023instantaneous}
X.~Buet, M.~Zerrad, M.~Lequime, G.~Soriano, J.-J. Godeme, J.~Fadili, and
  C.~Amra.
\newblock Instantaneous measurement of surface roughness spectra using
  white-light scattering projected on a spectrometer.
\newblock {\em Appl. Opt., AO}, 62(7):B164--B169, March 2023.

\bibitem{BuhlmannVandeGeerBook11}
P.~B{\"u}hlmann and S.~Van De~Geer.
\newblock {\em Statistics for {High}-{Dimensional} {Data}: {Methods}, {Theory}
  and {Applications}}.
\newblock Springer {Series} in {Statistics}. Springer, Berlin, Heidelberg,
  2011.

\bibitem{cai_guarantees_2015}
J.-F. Cai and W.~Xu.
\newblock Guarantees of total variation minimization for signal recovery.
\newblock {\em Information and Inference: A Journal of the IMA}, 4(4):328--353,
  December 2015.

\bibitem{cai_optimal_2016}
T.~T. Cai, X.~Li, and Z.~Ma.
\newblock Optimal {Rates} of {Convergence} for {Noisy} {Sparse} {Phase}
  {Retrieval} {Via} {Thresholded} {Wirtinger} {Flow}.
\newblock {\em The Annals of Statistics}, 44(5):2221--2251, 2016.

\bibitem{Candes_WF_2015}
E.~Cand{\`e}s, X.~Li, and M.~Soltanolkotabi.
\newblock Phase retrieval via {W}irtinger flow: Theory and algorithms.
\newblock {\em IEEE Trans. Inform. Theory}, 61(4):1985--2007, 2015.

\bibitem{candes_phaselift_2013}
E.~Cand\`es, T.~Strohmer, and V.~Voroninski.
\newblock {PhaseLift}: Exact and stable signal recovery from magnitude
  measurements via convex programming.
\newblock {\em Communications on Pure and Applied Mathematics},
  66(8):1241--1274, 2013.

\bibitem{candes2011simple}
E.~J. Cand{\`e}s and B.~Recht.
\newblock Simple bounds for recovering low-complexity models.
\newblock {\em Mathematical Programming}, 141(1--2):577--589, 2013.

\bibitem{chandrasekaran_convex_2012}
V.~Chandrasekaran, B.~Recht, A.~Parrilo, P., and S.~Willsky, A.
\newblock The {Convex} {Geometry} of {Linear} {Inverse} {Problems}.
\newblock {\em Found Comput Math}, 12(6):805--849, December 2012.

\bibitem{chen_solving_2017}
Y.~Chen and E.~Cand{\`e}s.
\newblock Solving random quadratic systems of equations is nearly as easy as
  solving linear systems.
\newblock {\em Comm. Pure Appl. Math.}, 70(5):822--883, May 2017.

\bibitem{demanet_stable_2013}
L.~Demanet and P.~Hand.
\newblock Stable {Optimizationless} {Recovery} from {Phaseless} {Linear}
  {Measurements}.
\newblock {\em MIT web domain}, November 2013.

\bibitem{FadiliDualTV10}
M.J. Fadili and G.~Peyr{\'e}.
\newblock Total variation projection with first order schemes.
\newblock {\em IEEE Transactions on Image Processing}, 20(3):657--669, 2010.

\bibitem{fannjiang_numerics_2020}
A.~Fannjiang and T.~Strohmer.
\newblock The numerics of phase retrieval.
\newblock {\em Acta Numerica}, 29:125--228, May 2020.

\bibitem{fienup_phase_1982}
J.~R. Fienup.
\newblock Phase retrieval algorithms: a comparison.
\newblock {\em Appl. Opt.}, 21(15):2758, August 1982.

\bibitem{gao_stable_2016}
B.~Gao, Y.~Wang, and Z.~Xu.
\newblock Stable {Signal} {Recovery} from {Phaseless} {Measurements}.
\newblock {\em J Fourier Anal Appl}, 22(4):787--808, August 2016.

\bibitem{Genzel21}
M.~Genzel, M.~M{\"a}rz, and R.~Seidel.
\newblock Compressed {Sensing} with {1D} {Total} {Variation}: {Breaking}
  {Sample} {Complexity} {Barriers} via {Non}-{Uniform} {Recovery}.
\newblock {\em Information and Inference: A Journal of the IMA}, 11:203--250,
  March 2022.

\bibitem{godeme2024stable}
J.-J. Godeme, J.~Fadili, C.~Amra, and M.~Zerrad.
\newblock Stable phase retrieval with mirror descent.
\newblock {\em arxiv:2405.10754}, 2024.

\bibitem{godeme2023provable}
J.-J. Godeme, J.~Fadili, X.~Buet, M.~Zerrad, M.~Lequime, and C.~Amra.
\newblock Provable phase retrieval with mirror descent.
\newblock {\em SIAM J. Imaging Sci.}, 16(3):1106--1141, September 2023.

\bibitem{gordon1988}
Y.~Gordon.
\newblock On {Milman}'s inequality and random subspaces which escape through a
  mesh in ?n.
\newblock In J.~Lindenstrauss and V.~D. Milman, editors, {\em Geometric
  {Aspects} of {Functional} {Analysis}}, Lecture {Notes} in {Mathematics},
  pages 84--106, Berlin, Heidelberg, 1988. Springer.

\bibitem{hand_compressed_2016}
P.~Hand and V.~Voroninski.
\newblock Compressed {Sensing} from {Phaseless} {Gaussian} {Measurements} via
  {Linear} {Programming} in the {Natural} {Parameter} {Space}.
\newblock {\em ArXiv}, November 2016.

\bibitem{JaganathanReview16}
K.~Jaganathan, Eldar~Y. C., and B.~Hassibi.
\newblock Phase retrieval: An overview of recent developments.
\newblock In A.~Stern, editor, {\em Optical Compressive Imaging}. CRC Press,
  2016.

\bibitem{jaganathan_recovery_2012}
K.~Jaganathan, S.~Oymak, and B.~Hassibi.
\newblock Recovery of sparse 1-{D} signals from the magnitudes of their
  {Fourier} transform.
\newblock In {\em 2012 {IEEE} {International} {Symposium} on {Information}
  {Theory} {Proceedings}}, pages 1473--1477, July 2012.
\newblock ISSN: 2157-8117.

\bibitem{jaganathan_sparse_2013}
K.~Jaganathan, S.~Oymak, and B.~Hassibi.
\newblock Sparse phase retrieval: {Convex} algorithms and limitations.
\newblock In {\em 2013 {IEEE} {International} {Symposium} on {Information}
  {Theory}}, pages 1022--1026, July 2013.
\newblock ISSN: 2157-8117.

\bibitem{jaganathan_sparse_2017}
K.~Jaganathan, S.~Oymak, and B.~Hassibi.
\newblock Sparse {Phase} {Retrieval}: {Uniqueness} {Guarantees} and {Recovery}
  {Algorithms}.
\newblock {\em IEEE Transactions on Signal Processing}, 65(9):2402--2410, May
  2017.

\bibitem{jagatap_fast_2017}
G.~Jagatap and C.~Hegde.
\newblock Fast, {Sample}-{Efficient} {Algorithms} for {Structured} {Phase}
  {Retrieval}.
\newblock In {\em Advances in {Neural} {Information} {Processing} {Systems}},
  volume~30. Curran Associates, Inc., 2017.

\bibitem{lecue_minimax_2015}
G.~Lecu{\'e} and S.~Mendelson.
\newblock Minimax rate of convergence and the performance of empirical risk
  minimization in phase recovery.
\newblock {\em Electronic Journal of Probability}, 20:1--29, January 2015.

\bibitem{Ledoux2005}
M.~Ledoux.
\newblock {\em The {Concentration} of {Measure} {Phenomenon}}, volume~89 of
  {\em Mathematical {Surveys} and {Monographs}}.
\newblock American Mathematical Society, Providence, Rhode Island, February
  2005.

\bibitem{li_sparse_2013}
X.~Li and V.~Voroninski.
\newblock Sparse {Signal} {Recovery} from {Quadratic} {Measurements} via
  {Convex} {Programming}.
\newblock {\em SIAM Journal on Mathematical Analysis}, 45(5):3019--3033,
  January 2013.

\bibitem{luke_phase_2017}
D.~R. Luke.
\newblock Phase {Retrieval}, {What}{\textquoteright}s {New}?
\newblock {\em SIAG/OPT Views and News}, 25(1):1--6, 2017.

\bibitem{luu_sharp_2020}
T.~D. Luu, J.~Fadili, and C.~Chesneau.
\newblock Sharp oracle inequalities for low-complexity priors.
\newblock {\em Ann Inst Stat Math}, 72(2):353--397, April 2020.

\bibitem{mcrae_optimal_2023}
A.~D. McRae, J.~Romberg, and M.~A. Davenport.
\newblock Optimal {Convex} {Lifted} {Sparse} {Phase} {Retrieval} and {PCA}
  {With} an {Atomic} {Matrix} {Norm} {Regularizer}.
\newblock {\em IEEE Transactions on Information Theory}, 69(3):1866--1882,
  March 2023.

\bibitem{needell_cosamp_2009}
D.~Needell and J.~A. Tropp.
\newblock {CoSaMP}: {Iterative} signal recovery from incomplete and inaccurate
  samples.
\newblock {\em Applied and Computational Harmonic Analysis}, 26(3):301--321,
  May 2009.

\bibitem{needell_near-optimal_2013}
D.~Needell and R.~Ward.
\newblock Near-{Optimal} {Compressed} {Sensing} {Guarantees} for {Total}
  {Variation} {Minimization}.
\newblock {\em IEEE Transactions on Image Processing}, 22(10):3941--3949,
  October 2013.

\bibitem{needell_stable_2013}
D.~Needell and R.~Ward.
\newblock Stable {Image} {Reconstruction} {Using} {Total} {Variation}
  {Minimization}.
\newblock {\em SIAM J. Imaging Sci.}, 6(2):1035--1058, January 2013.

\bibitem{netrapalli_phase_2015}
P.~Netrapalli, P.~Jain, and S.~Sanghavi.
\newblock Phase retrieval using alternating minimization.
\newblock {\em IEEE Transactions on Signal Processing}, 63(18):4814--4826,
  2015.

\bibitem{ohlsson_conditions_2014}
H.~Ohlsson and Y.~C. Eldar.
\newblock On conditions for uniqueness in sparse phase retrieval.
\newblock In {\em 2014 {IEEE} {International} {Conference} on {Acoustics},
  {Speech} and {Signal} {Processing} ({ICASSP})}, pages 1841--1845, May 2014.

\bibitem{oymak_simultaneously_2015}
S.~Oymak, A.~Jalali, M.~Fazel, Y.~C. Eldar, and B.~Hassibi.
\newblock Simultaneously {Structured} {Models} {With} {Application} to {Sparse}
  and {Low}-{Rank} {Matrices}.
\newblock {\em IEEE Transactions on Information Theory}, 61(5):2886--2908, May
  2015.

\bibitem{oymak_squared-error_2013}
S.~Oymak, C.~Thrampoulidis, and B.~Hassibi.
\newblock The squared-error of generalized {LASSO}: {A} precise analysis.
\newblock In {\em 2013 51st {Annual} {Allerton} {Conference} on
  {Communication}, {Control}, and {Computing} ({Allerton})}, pages 1002--1009,
  October 2013.

\bibitem{pauwels_fienup_2018}
E.~Pauwels, A.~Beck, Y.~C. Eldar, and S.~Sabach.
\newblock On fienup methods for sparse phase retrieval.
\newblock {\em IEEE Transactions on Signal Processing}, 66(4):982--991,
  February 2018.

\bibitem{pedarsani_phasecode_2017}
R.~Pedarsani, D.~Yin, K.~Lee, and K.~Ramchandran.
\newblock {PhaseCode}: {Fast} and {Efficient} {Compressive} {Phase} {Retrieval}
  {Based} on {Sparse}-{Graph} {Codes}.
\newblock {\em IEEE Transactions on Information Theory}, 63(6):3663--3691, June
  2017.

\bibitem{rao_universal_2012}
N.~Rao, B.~Recht, and R.~Nowak.
\newblock Universal {Measurement} {Bounds} for {Structured} {Sparse} {Signal}
  {Recovery}.
\newblock In {\em Proceedings of the {Fifteenth} {International} {Conference}
  on {Artificial} {Intelligence} and {Statistics}}, pages 942--950, March 2012.

\bibitem{rao2012signal}
N.~S. Rao, B.~Recht, and R.~Nowak.
\newblock Signal {Recovery} in {Unions} of {Subspaces} with {Applications} to
  {Compressive} {Imaging}.
\newblock {\em arXiv: Machine Learning}, September 2012.

\bibitem{rockafellar_convex_1970}
R.~T. Rockafellar.
\newblock {\em Convex Analysis}.
\newblock Princeton University Press, 1970.

\bibitem{Sahinoglou91}
H.~Sahinoglou and S.~Cabrera.
\newblock On phase retrieval of finite-length sequences using the initial time
  sample.
\newblock {\em IEEE Transactions on Circuits and Systems}, 38(5):954--958,
  1991.

\bibitem{scherzer2009}
O.~Scherzer, M.~Grasmair, H.~Grossauer, M.~Haltmeier, and F.~Lenzen.
\newblock {\em Variational {Methods} in {Imaging}}, volume 167 of {\em Applied
  {Mathematical} {Sciences}}.
\newblock Springer, New York, NY, 2009.
\newblock ISSN: 0066-5452.

\bibitem{schniter_compressive_2015}
P.~Schniter and S.~Rangan.
\newblock Compressive phase retrieval via generalized approximate message
  passing.
\newblock {\em IEEE Transactions on Signal Processing}, 63(4):1043--1055, 2015.

\bibitem{shechtman_phase_2014}
Y.~Shechtman, Y.~C. Eldar, O.~Cohen, H.~N. Chapman, J.~Miao, and M.~Segev.
\newblock Phase retrieval with application to optical imaging: A contemporary
  overview.
\newblock {\em IEEE Signal Processing Magazine}, 32(3):87--109, May 2015.

\bibitem{soltanolkotabi_structured_2019}
M.~Soltanolkotabi.
\newblock Structured {Signal} {Recovery} {From} {Quadratic} {Measurements}:
  {Breaking} {Sample} {Complexity} {Barriers} via {Nonconvex} {Optimization}.
\newblock {\em IEEE Transactions on Information Theory}, 65(4):2374--2400,
  April 2019.

\bibitem{starckfadili2015sparse}
J.-L. Starck, F.~Murtagh, and J.~Fadili.
\newblock {\em Sparse Image and Signal Processing: Wavelets and Related
  Geometric Multiscale Analysis, Second Edition}.
\newblock Cambridge University Press, 2 edition, 2015.

\bibitem{steidl2004equivalence}
G.~Steidl, J.~Weickert, T.~Brox, P.~Mr{\'a}zek, and M.~Welk.
\newblock On the {Equivalence} of {Soft} {Wavelet} {Shrinkage}, {Total}
  {Variation} {Diffusion}, {Total} {Variation} {Regularization}, and {SIDEs}.
\newblock {\em SIAM J. Numer. Anal.}, 42(2):686--713, January 2004.

\bibitem{tikhonov1978solutions}
A.~N. Tikhonov and V.~Y. Arsenin.
\newblock {\em Solutions of {Ill}-{Posed} {Problems}.}
\newblock V. H. Winston and Sons, October 1977.

\bibitem{tropp_user-friendly_2012}
J.~A. Tropp.
\newblock User-{Friendly} {Tail} {Bounds} for {Sums} of {Random} {Matrices}.
\newblock {\em Found Comput Math}, 12(4):389--434, August 2012.

\bibitem{tropp2015convex}
J.~A. Tropp.
\newblock Convex {Recovery} of a {Structured} {Signal} from {Independent}
  {Random} {Linear} {Measurements}.
\newblock In {\em Sampling Theory, a Renaissance}, pages 67--101. Birkh\"auser,
  2015.

\bibitem{vaiterimaiai13}
S.~Vaiter, M.~Golbabaee, J.~Fadili, and G.~Peyr{\'e}.
\newblock Model selection with low complexity priors.
\newblock {\em Information and Inference: A Journal of the IMA}, 4(3):230--287,
  September 2015.

\bibitem{vaiter2015low}
S.~Vaiter, G.~Peyr{\'e}, and J.~Fadili.
\newblock Low {Complexity} {Regularization} of {Linear} {Inverse} {Problems}.
\newblock In G{\"o}tz~E. Pfander, editor, {\em Sampling {Theory}, a
  {Renaissance}: {Compressive} {Sensing} and {Other} {Developments}}, Applied
  and {Numerical} {Harmonic} {Analysis}, pages 103--153. Springer International
  Publishing, 2015.

\bibitem{vaswani_non-convex_2020}
N.~Vaswani.
\newblock Non-convex structured phase retrieval.
\newblock {\em arXiv:2006.13298}, June 2020.

\bibitem{vershynin_introduction_2011}
R.~Vershynin.
\newblock Introduction to the non-asymptotic analysis of random matrices.
\newblock {\em arXiv:1011.3027}, November 2011.

\bibitem{vonNeumann1962some}
J.~Von~Neumann.
\newblock {\em Some matrix inequalities and metrization of matric space.}
\newblock In: Collected Works, Vol. IV, Pergamon, Oxford, 1962.

\bibitem{Voroninski14}
V.~Voroninski and Z.~Xu.
\newblock A strong restricted isometry property, with an application to
  phaseless compressed sensing.
\newblock {\em Applied and Computational Harmonic Analysis}, 40(2):386--395,
  March 2016.

\bibitem{wang_sparta_2017}
G.~Wang, Georgios~B. Giannakis, J.~Chen, and M.~Ak{\c c}akaya.
\newblock {SPARTA}: {Sparse} phase retrieval via {Truncated} {Amplitude} flow.
\newblock In {\em 2017 {IEEE} {International} {Conference} on {Acoustics},
  {Speech} and {Signal} {Processing} ({ICASSP})}, pages 3974--3978, March 2017.

\bibitem{wang_phase_2014}
Y.~Wang and Z.~Xu.
\newblock Phase retrieval for sparse signals.
\newblock {\em Applied and Computational Harmonic Analysis}, 37(3):531--544,
  November 2014.

\bibitem{yang_misspecified_2019}
Z.~Yang, L.~F. Yang, E.~X. Fang, T.~Zhao, Z.~Wang, and M.~Neykov.
\newblock Misspecified nonconvex statistical optimization for sparse phase
  retrieval.
\newblock {\em Mathematical Programming}, 176:545--571, July 2019.

\bibitem{yuan_phase_2019}
Z.~Yuan, H.~Wang, and Q.~Wang.
\newblock Phase retrieval via {Sparse} {Wirtinger} {Flow}.
\newblock {\em Journal of Computational and Applied Mathematics}, 355:162--173,
  August 2019.

\end{thebibliography}
}
\end{document}